\def\ifl{\iffalse }
\def\bc{\begin{center}} \def\ec{\end{center}}
\def\ba{\begin{array}} \def\ea{\end{array}}
\def\bea{\begin{eqnarray}} \def\eea{\end{eqnarray}}
\def\beaa{\begin{eqnarray*}} \def\eeaa{\end{eqnarray*}}
\newtheorem{thm}{Theorem}[section]
\newtheorem{lem}{Lemma}[section]
\theoremstyle{remark}
\newtheorem{rem}{Remark}[section]
\newtheorem*{rem*}{Remark}
\numberwithin{equation}{section}
\newcommand{\R}{\mathbb{R}}
\newcommand{\pa}{\partial}
\newcommand{\na}{\nabla}
\newcommand{\al}{\alpha}
\newcommand{\be}{\beta}
\newcommand{\Ga}{\Gamma}
\renewcommand{\bar}[1]{\overline{#1}}
\newcommand{\Lg}{\langle}
\newcommand{\Rg}{\rangle}
\newcommand{\OK}{\operatorname{OK}}
\newcommand{\Flor}[1]{\lfloor{#1}\rfloor}
\title[Quasilinear wave]{uniform estimates for  2D quasilinear wave}
\author[D. Li]{ Dong Li}
\address{D. Li, Department of Mathematics, The University of Hong Kong, Hong Kong, China}%
\email{mathdl@hku.hk}
\begin{document}

\begin{abstract}
We consider two-dimensional quasilinear wave equations with standard null-type quadratic nonlinearities. In 2001 Alinhac proved that such systems possess global in time solutions for
 compactly supported initial data with sufficiently small Sobolev norm. The highest norm of
the constructed solution grows polynomially in time.  In this work we develop a new
strategy and  prove uniform boundedness of the highest order norm of the solution for all time.
\end{abstract}

\maketitle

\section{introduction}
Let $\square = \partial_{tt} - \Delta =\partial_{tt} -\partial_{x_1x_1} -\partial_{x_2x_2}$ be the usual d'Alembertian operator in ($2+1$) space-time. 
 We consider the following quasilinear wave equation:
\begin{align}\label{eq:we2d}
 \begin{cases}
  \square u=g^{kij} \partial_k u \partial_{ij} u,\, \quad t>2, \quad x\in\R^2;\\
  (u, \partial_t u)\Bigr|_{t=2} = (f_1, f_2),
 \end{cases}
\end{align}
where the functions $f_1$, $f_2:\; \mathbb R^2\to \mathbb R$ are initial data.
On the RHS of \eqref{eq:we2d} we employ the Einstein summation convention with $\partial_0=\partial_t$, $\partial_l=
\partial_{x_l}$, $l=1,2$.  For simplicity we assume $g^{kij}$ are constant coefficients,
$g^{kij}=g^{kji}$ for any $i$, $j$,  and satisfy the standard null condition:
\begin{align} \label{null1}
g^{kij} \omega_k \omega_i \omega_j=0, 
\qquad\text{for any null $\omega$, i.e., 
$\omega=(-1,\cos\theta, \sin \theta)$, $\theta \in [0,2\pi]$}.
\end{align}
In the seminal work \cite{Alinh01_1},  Alinhac   showed that for compactly supported  initial data  which have sufficiently small Sobolev norm, 
 the system \eqref{eq:we2d} with the null condition \eqref{null1} admits global in time solutions.
 The main ingredients of Alinhac's proof are two: 
 \begin{itemize}
\item[1)] construction of an approximation solution;
\item[2)] time-dependent weighted energy estimates known as the ghost weight method.  
\end{itemize}
The name ghost weight derives from a judiciously chosen bounded space-time weight which seems negligible by itself but after differentiation produces a remarkable
 stabilization term helping to balance the critical decay of the solution near the light-cone. Besides
 the aforementioned ghost weight,  the weighted energy estimates  typically involve a number
of vector fields which are the infinitesimal generators of certain symmetry groups, for example:
\begin{itemize}
\item Spatial rotation: $\partial_{\theta}= x^{\perp}\cdot \nabla = x_1 \partial_2 -x_2 \partial_1$,
$x^{\perp}=(-x_2,x_1)$.
\item Lorentz boost: $L_i = x_i \partial_0 + t \partial_i$, $i=1,2$.
\item Scaling:  $L_0= t \partial_t + r \partial_r$.
\end{itemize}
In particular,  the Lorentz boost vector fields were employed together with the scaling operator in order to extract sufficient time-decay of the solution. While the Lorentz boost vector fields can lead to strong time-decay estimates, they are not suitable for general wave systems which are not Lorentz invariant. To name a few we mention  non-relativistic wave systems with multiple wave speeds 
(cf. \cite{SK96,ST01}), nonlinear wave equations
on non-flat space-time (cf. \cite{Yang16}) and exterior domains (cf. \cite{MetSog07}).
From this perspective it is of fundamental importance to remove the Lorentz boost operator
and develop a new strategy for the general non-Lorentz-invariant systems. 
In \cite{Hosh06}, Hoshiga considered a quasilinear system with multiple speeds of propagation,
and proved global wellposedness under some suitable strong null conditions. In \cite{Zha16} (see also  \cite{PZ16}), Zha considered \eqref{eq:we2d}--\eqref{null1} with the  additional symmetry condition:
$
g^{kij}= g^{ikj}=g^{jik}, \;\forall\, i,j,k.
$
For this case Zha developed the first proof of  global wellposedness without using the Lorentz boost vector fields. 
Note that the additional symmetry condition introduced by Zha appears to be a bit restrictive. For example, it does not include
the standard nonlinearity $\partial ( |\partial_t u|^2 - |\nabla u|^2)$. In recent work \cite{CLX},
a novel strong null form which includes several prototypical
strong null forms such as $\partial ( |\partial_t u|^2 - |\nabla u|^2)$ in the literature 
and also some null forms in \cite{CLM18} as special cases. Moreover  for this class of new null forms,  a new normal-form type Lorentz-boost-free
strategy was developed in \cite{CLX} to prove global wellposedness and uniform boundedness of the highest norm of the solution. We refer to the papers \cite{Lei16, CLM18, Cai21, Dong21, HY20, Zh19, Ka17, Chrd86, Hor86, John90, Sid97, Sid00,WangYu14} for other related developments and different strategies.

We now mention a few other important works on somewhat related systems.
 In \cite{Lei16}, by using Alinhac's method, Lei established small data global wellposedness for 2D incompressible elastodynamics.  A similar result was obtained independently by X. Wang in \cite{W17} using a normal form method.
 In \cite{CLM18}, 
%developing upon Lei's strong null condition
%in \cite{Lei16},
 Cai, Lei and Masmoudi considered the quasilinear wave equations of the form
$
\square u = A_l \partial_l ( N_{ij} \partial_i u \partial_j u)$, 
where $A_l$, $N_{ij}$ are constants, and 
$
N_{ij} \omega_i \omega_j=0
$
for any null vector $\omega$.
A special case is the equation $
\square u = \partial_t ( |\partial_t u|^2 - |\nabla u|^2)$.
In \cite{CLM18} by using a nonlocal transformation (see Remark 1.3 therein) 
it was shown that the above system  has a uniform bound of the highest-order
energy for all time. More recently by using Alinhac's ghost weight
and the null structure in the Lagrangian formulation, Cai \cite{Cai21} showed uniform boundedness of
the highest-order energy for 2D incompressible elastodynamics. In \cite{Dong21}, by using the hyperbolic
foliation method which goes back to H\"ormander and Klainerman, Dong, LeFloch and Lei
showed that the top-order energy of the system \eqref{eq:we2d} with the null condition
\eqref{null1} is uniformly bounded for all time. The main advantage
of the hyperbolic change of variable is that one can gain better control of the conformal
energy thanks to the extra integrability in the hyperbolic time $s=\sqrt{t^2-r^2}$. One should note,
however, that if one works  with the advanced coordinate $s=t-r$, then there is certain degeneracy 
in the $\partial_s$ direction which renders (even any generalized)
conformal energy out of control.  In this connection an interesting further issue is to explore the monotonicity of the conformal energy (and possible generalizations) with respect to different space-time foliations.  Another subtle technical issue in the hyperbolic foliation method is the  extensive use of Lorentz boost
vector fields which appears not suitable for general non-Lorentz-invariant systems.

In Alinhac's work \cite{Alinh01_1}, the highest energy of the
constructed solution has an upper bound which grows polynomially in time. An ensuing
open question is whether this growth is genuinely true phenomena known as  the ``blowup-at-infinity"
conjecture (\cite{Al02, Al03, Al10}).  The purpose of this work is to develop further the program initiated in \cite{Zha16, PZ16, CLX, CLXZ}, and obtain the uniform boundedness of highest norm under the generic null condition \eqref{null1}. We introduce a new approach and settle the blowup-at-infinity conjecture  without employing the Lorentz boost vector fields. 
The main result is the following.
\begin{thm}\label{thm:main}
Consider \eqref{eq:we2d} with $g^{kij}$ satisfying the standard null condition 
\eqref{null1}. Let $m\ge 5$ and assume $f_1 \in H^{m+1}(\mathbb R^2) $, $f_2\in H^{m} (\mathbb R^2)$ are compactly supported in the disk $\{|x| \le 1\}$.  There exists $\varepsilon_{0}>0$ depending on $g^{kij}$ and $m$ such that if
 $\|f_1\|_{H^{m+1}}+\|f_2\|_{H^{m}} <\varepsilon_0 $, then the system \eqref{eq:we2d} has a unique global solution. 
Furthermore, the highest norm of the solution remains uniformly bounded, namely
\begin{align}
\sup_{t\ge 2} \sum_{|\alpha|\le m}  \| (\partial \Gamma^{\alpha} u)(t,\cdot) \|_{L_x^2(\mathbb R^2)}
< \infty.
\end{align}
Here $\Gamma=\{ \partial_t, \partial_{x_1}, \partial_{x_2}, \partial_{\theta}, t\partial_t +r\partial_r\}$ does not include the Lorentz boost (see  \eqref{def_Gamma1} for notation). 
\end{thm}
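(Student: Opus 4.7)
The plan is to set up a bootstrap argument on a hierarchy of weighted energies, following the philosophy developed in the author's earlier works that remove the Lorentz boost from Alinhac's ghost-weight framework, and then introduce a new ingredient tailored to the generic null condition \eqref{null1}. First I would construct an approximate solution as in Alinhac and work with a family of energies $E_k(t)$ for $k\le m$, defined with Alinhac's ghost weight $e^{q(r-t)}$ (with $q$ bounded and $q'>0$), so that the energy identity applied to $\Gamma^{\al}u$ generates not only the usual $\|\pa\Gamma^{\al}u(t)\|_{L^2}^2$ but also the spacetime-integrable coercive term $\int_2^t\!\!\int_{\R^2} e^{q}|\bar\pa \Gamma^{\al}u|^2\,dx\,ds$, where $\bar\pa$ denotes the good (tangential-to-light-cone) derivatives. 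The bootstrap hypothesis should be that the top energy $E_m(t)$ stays bounded by $2C_0\varepsilon_0$ while the lower energies $E_k$ ($k<m$) are allowed a small $t^{\delta}$ growth compatible with Klainerman--Sobolev type decay that uses only $\Gamma=\{\pa,\pa_\theta,L_0\}$.

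Second, I would decompose the nonlinearity in a null frame and use the null condition \eqref{null1} to show that every trilinear interaction $g^{kij}\pa_k u\,\pa_{ij}u$, as well as its commuted versions with $\Ga^{\al}$, contains at least one tangential factor $\bar\pa$. This is the standard payoff of \eqref{null1}, and it allows each bulk term in the energy estimate to be paired against the ghost-weight coercivity gained above, turning a borderline $1/t$ loss into an $L^2_{t,x}$-absorbable quantity. The commutators $[\Ga,\square]$ and $[\Ga,\pa]$ are harmless because $\Ga$ preserves the null structure on the nonlinear side: rotation $\pa_\theta$ and scaling $L_0=t\pa_t+r\pa_r$ commute cleanly with the nonlinearity up to reshufflings of the $g^{kij}$.

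Third — and this is where Lorentz boosts are typically invoked in Alinhac's argument to extract a $\langle t\rangle^{-1}\langle t-r\rangle$ type improvement — I would replace their role by a normal-form / modified-energy transformation $u\mapsto v=u+\mathcal{Q}(\pa u,\pa u)$, with $\mathcal{Q}$ a carefully chosen bilinear form that cancels the leading resonant part of the nonlinearity at top order. The residual terms after this substitution are either cubic (absorbed trivially) or contain extra factors of $\bar\pa$ or $\langle t-r\rangle/t$, both of which are controlled by the ghost-weight integrability of $\bar\pa\Ga^{\al}u$ combined with the scaling identity $t\pa_t+r\pa_r=(t-r)\pa_t+r(\pa_t+\pa_r)$, which converts scaling control into either a $\langle t-r\rangle$-weighted gain or a tangential derivative. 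Together these should yield $\frac{d}{dt}E_m(t)\lec\frac{\varepsilon}{\langle t\rangle^{1+\eta}}E_m(t)+\text{(integrable error)}$, closing the bootstrap for $E_m$ uniformly in $t$.

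The main obstacle will be the top-level term where all $m$ vector fields land on one factor, i.e.\ $g^{kij}\pa_k\Ga^{\al}u\cdot\pa_{ij}u$ with $|\al|=m$: the second factor has no vector fields and, in 2D, only the Klainerman--Sobolev estimate built from $\{\pa,\pa_\theta,L_0\}$ is available, giving pointwise decay no better than $\langle t\rangle^{-1/2}\langle t-r\rangle^{-1/2}$, which is log-critical against the free energy. The null structure must be used to force a $\bar\pa$ onto either the high-derivative or the low-derivative factor so that, pairing against $e^{q}|\bar\pa\Ga^{\al}u|^2$ and Hardy-type estimates in $r-t$, the resulting $\int\!\!\int\langle t\rangle^{-1-\eta}$ bound becomes genuinely integrable. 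Executing this borderline pairing — most likely via a secondary normal-form reduction adapted to \eqref{null1} and a delicate splitting of $\pa_{ij}u$ into $\bar\pa\pa u$ plus a lower-order piece controlled by the equation itself — is the technical heart of the argument, and the reason the full generic null condition (rather than a strong null subclass as in \cite{CLM18,Zha16,CLX}) requires a genuinely new mechanism.
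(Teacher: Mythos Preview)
Your proposal has the right scaffolding (ghost weight, null-frame decomposition, identification of the top-order interaction $g^{kij}\partial_k\Gamma^{\alpha}u\,\partial_{ij}u$ with $|\alpha|=m$ as the critical obstruction), but the mechanism you offer for that obstruction is a genuine gap. The substitution $v=u+\mathcal{Q}(\partial u,\partial u)$ is essentially the strategy of \cite{CLX}, which the introduction of this paper explains works only for a \emph{strong} null subclass; the point of the present paper is that the generic null condition \eqref{null1} needs a different device.

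Concretely: after the null reduction of $\int g^{kij}\partial_k v\,\partial_{ij}u\,\partial_t v\,e^p$ (with $v=\Gamma^{\alpha}u$), the surviving term is $\int g^{kij}\omega_i\omega_j T_k v\,\partial_{tt}u\,\partial_t v\,e^p$. Decomposing $T_k=\omega_k\partial_+ + \omega_k^{\perp}r^{-1}\partial_{\theta}$ splits this into $Y_A$ (the $\partial_+ v$ piece) and $Y_B$ (the $r^{-1}\partial_{\theta}v$ piece). The paper handles $Y_A$ by an integration by parts in $\partial_+$ followed by the identity $\partial_+\partial_-=\square+r^{-1}\partial_r+r^{-2}\partial_{\theta\theta}$, which is indeed an energy-level normal form in your spirit. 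But $Y_B$ does not yield to this: after integrating by parts in $\theta$ one faces $t^{-1}\int h(\theta)\,v\,\partial_{tt}u\,\partial_t\partial_{\theta}v\,\phi\,e^p$, and $\partial_t\partial_{\theta}v$ is one derivative above what $E_m$ controls. No bilinear correction $\mathcal{Q}$ is known to remove this for generic $g^{kij}$.

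The paper's resolution is a \emph{nonlocal} auxiliary energy: one bounds $\|\langle\nabla\rangle^{-1}\partial_t\partial_{\theta}v\|_2$ by $\|\langle\nabla\rangle^{-1}\partial\Gamma^{\le m+1}u\|_2$, and proves the latter grows at most like $t^{\delta}$ (any $0<\delta<\tfrac12$) via a separate frequency-localized ghost-weight estimate on
\[
\tilde E_m=\sum_{J\ge 0}\sum_{|\beta|\le m+1}2^{-2J}\|e^{p/2}\partial P_J\Gamma^{\beta}u\|_2^2,
\]
closed through a battery of Littlewood--Paley commutator estimates (Lemmas~\ref{lem5.1S_0}, \ref{Lem5.2S0}, \ref{Lem5.3Sa0}, \ref{Lem5.4S0}). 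Combined with $\|\langle r-t\rangle\partial_{tt}u\|_{\infty}\lesssim t^{-1/2}$ and the explicit $t^{-1}$ from $r^{-1}$, this gives $Y_B=O(t^{-3/2+\delta})$, which is integrable. Your outline does not anticipate this auxiliary nonlocal energy at order $m{+}1$, and the ``secondary normal-form reduction'' you allude to is precisely the step that fails for \eqref{null1}.

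Two smaller corrections: the paper does \emph{not} construct an approximate solution (that was Alinhac's route, dispensed with here), and your bootstrap hierarchy is inverted---the top energy $E_m$ is shown uniformly bounded directly, while it is the auxiliary $\tilde E_m$ (morally at order $m{+}1$, with the $\langle\nabla\rangle^{-1}$ smoothing) that is permitted the $t^{\delta}$ loss.
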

\begin{rem}
The regularity constraint $m\ge 5$ can  be lowered further by optimizing some technical arguments. However we shall not dwell on this issue in this work.
\end{rem}

We now explain the key steps of the proof of Theorem \ref{thm:main} (see section 2
for the relevant notation). Fix any multi-index $\alpha$ with $|\alpha|\le m$
and consider $\Gamma^{\alpha} u$.   By Lemma \ref{Lem2.3}, we have
\begin{align} \label{1.6AA}
\square \Gamma^{\alpha} u = \sum_{\alpha_1+\alpha_2 \le \alpha}  g^{kij}_{\alpha;\alpha_1,\alpha_2} 
\partial_k \Gamma^{\alpha_1}u \partial_{ij} \Gamma^{\alpha_2} u,
\end{align}
where $g^{kij}_{\alpha;\alpha,0} =g^{kij}_{\alpha;0,\alpha}=g^{kij}$,  and $g^{kij}_{\alpha;
\alpha_1,\alpha_2}$ still satisfies the null condition \eqref{null1} for all other values of
$(\alpha_1,\alpha_2)$.

Step 1. Weighted energy estimates: LHS of \eqref{1.6AA}. We choose $p (r,t)=q(r-t)$ with $q^{\prime}(s)$ nearly
scales as $\langle s \rangle^{-1}$ to derive
\begin{align}
\int \square \Gamma^{\alpha}u \partial_t \Gamma^{\alpha} u e^p dx = 
\frac 12 \frac d {dt}
( \| e^{\frac p2} \partial \Gamma^{\alpha} u \|_2^2)
+ \frac 12 \int e^p q^{\prime} |T \Gamma^{\alpha} u|^2 dx.
%=\int g^{kij} \omega_i \omega_j T_k v  \partial_{t} v \partial_{tt}u e^p dx+\cdots.
\end{align}
Summing over $|\alpha|\le m$, we have (below $v=\Gamma^{\alpha} u$)
\begin{align}
&\sum_{|\alpha| \le m} \| e^{\frac p2} \partial v \|_2^2 \sim E_{m} 
=\sum_{|\alpha|\le m} \| \partial \Gamma^{\alpha} u(t,\cdot) \|_{2}^2; \\
&\sum_{|\alpha|\le m}  \int e^p q^{\prime} |Tv|^2 dx
= \sum_{|\alpha|\le m} 
\int e^p q^{\prime} |T \Gamma^{\alpha} u |^2 dx. 
\end{align}

Step 2. Refined decay estimates. One way to remedy the lack of Lorentz boost vector fields
is to employ $L^{\infty}$ and $L^2$ estimates involving the weight-factor $\langle
r-t\rangle$. At the expense of certain smallness of $E_{\Flor{\frac m2}+3}$ and
using in an essential way the nonlinear null form
(see Lemma \ref{lem2.3a}), we obtain
\begin{align}
& \| \langle r -t\rangle (\partial^2 \Gamma^{\le l_0}
u)(t, \cdot)\|_2 \lesssim \| (\partial \Gamma^{\le l_0+1} u )(t,\cdot) \|_2,\qquad
\forall\, l_0\le m-1; \\
& |\langle r -t \rangle (\partial^2 \Gamma^{\le l_0}
u)(t,x)| \lesssim | (\partial \Gamma^{l_0+1} )(t,x)|, \qquad\forall\, r\ge t/10, \,
l_0\le m-1; \\
& \| (\partial  \Delta \Gamma^{\le m-3} u )(t, \cdot )\|_{L_x^2(|x|\le \frac 23 t)} \lesssim
t^{-2} \| (\partial \Gamma^{\le m-1} u)(t,\cdot) \|_2. 
\end{align} 
These in turn lead to a handful of new strong decay estimates (see Lemma 
\ref{lem2.6}):
\begin{align}
&t^{\frac 12}
\| \partial \Gamma^{\le m-3} u\|_{\infty}
+t^{\frac 32} \| \frac {T\Gamma^{\le m-3} u} {\langle r -t\rangle
}\|_{\infty}
+t^{\frac 12} \| \frac {T \Gamma^{\le m-2} u } {\langle r -t\rangle}
\|_2 \lesssim E_{m-1}^{\frac 12} ;\\
& t^{\frac 12}
\| \langle r-t\rangle \partial^2 \Gamma^{\le m-4} u \|_{\infty}
+ t^{\frac 32}
\| T \partial \Gamma^{\le m-4} u \|_{\infty}
+ t \| T \partial \Gamma^{\le m-4} u \|_2 \lesssim E_{m-1}^{\frac 12}.
\end{align}
These decay estimates play an important role in the nonlinear energy estimates. 

Step 3. Weighted energy estimates: nonlinear terms. We discuss several cases.

Case 1: { $\alpha_1<\alpha$ and $\alpha_2<\alpha$}.
Since $g^{kij}_{\alpha;\alpha_1,\alpha_2}$ still
satisfies the null condition, by Lemma \ref{Lem2.3} we rewrite
\begin{align}
  &\sum_{\substack{\alpha_1<\alpha,  \alpha_2 <\alpha \\ \alpha_1+\alpha_2\le \alpha}}
g^{kij}_{\alpha; \alpha_1,\alpha_2} \partial_k \Gamma^{\alpha_1} u
\partial_{ij} \Gamma^{\alpha_2} u    \notag \\
= &\sum_{\substack{\alpha_1<\alpha,  \alpha_2 <\alpha \\ \alpha_1+\alpha_2\le \alpha}}
g^{kij}_{\alpha;\alpha_1,\alpha_2}
(T_k \Gamma^{\alpha_1} u \partial_{ij} \Gamma^{\alpha_2} u
-\omega_k \partial_t \Gamma^{\alpha_1 } u T_i \partial_j \Gamma^{\alpha_2} u
+ \omega_k \omega_i \partial_t \Gamma^{\alpha_1} u T_j
\partial_t \Gamma^{\alpha_2} u ).
\end{align}

By using the decay estimates obtained in Step 2, we show that
\begin{align} 
  & \sup_{|\alpha|\le m} \| \sum_{\substack{\alpha_1<\alpha,  \alpha_2 <\alpha \\ \alpha_1+\alpha_2\le \alpha}}
g^{kij}_{\alpha;\alpha_1,\alpha_2} \partial_k \Gamma^{\alpha_1} u
\partial_{ij} \Gamma^{\alpha_2} u    \|_2 \lesssim  
t^{-\frac 32} E_{\Flor{\frac {m}2}+3}^{\frac12} E_{m}^{\frac 12}.
\end{align}

Case 2: The quasilinear piece $\alpha_1=0$, $\alpha_2=\alpha$. 
  Recall that $g^{kij}_{\alpha;0,\alpha}
=g^{kij}$. 
By using successive integration by parts, we have
\begin{align}
 \int g^{kij}\pa_{k}u\pa_{ij}\Gamma^{\alpha} u \pa_{t}\Gamma^{\alpha} u e^{p}
= \OK,
\end{align}
where $\OK$ is in the sense of \eqref{No3.5}. Here we exploit an important algebraic
identity (see \eqref{varphi1})
\begin{align}\notag
   &-\pa_{j}\varphi\pa_{i}v\pa_{t}v+\pa_{t}\varphi\pa_{i}v\pa_{j}v-\pa_{i}\varphi\pa_{t}v\pa_{j}v
   \notag\\
   =&-T_{j}\varphi\pa_{i}v\pa_{t}v+\pa_{t}\varphi T_{i}v T_{j} v-T_{i}\varphi\pa_{t}v\pa_{j}v-\omega_{i}\omega_{j}\pa_{t}\varphi(\pa_{t}v)^2,
 \end{align}
where $\varphi$ is taken to be either $\pa_{k}u$ or $e^{p}$, and $v=\Gamma^{\alpha}u$. The standard null form condition amounts
to the annihilation of the term $\omega_i \omega_j \omega_k$ when
$\varphi =\partial_k u$ and  $\partial_t \varphi$ is
replaced by $T_k\partial_t u -\omega_k \partial_{tt} u$.

Case 3: the main piece $\alpha_1=\alpha$, $\alpha_2=0$. By using Lemma \ref{Lem2.3}
with the decay estimates, we derive
\begin{equation}
  \int g^{kij} \pa_{k} \Gamma^{\alpha} u\pa_{ij}u\pa_{t} \Gamma^{\alpha} u e^{p}= \OK+
  \underbrace{\int g^{kij} \omega_{i}\omega_{j}T_{k} \Gamma^{\alpha} u \pa_{tt}u\pa_{t}\Gamma^{\alpha} ue^{p}}_{=:Y_1}.
\end{equation}
We  perform a further refined decomposition of the term $Y_1$.
By using $T_1=\omega_1\partial_+-\frac{\omega_2}r
\partial_{\theta}$
and $T_2=\omega_2\partial_++\frac{\omega_1} r \partial_{\theta}$,   we obtain
(below we denote $v=\Gamma^{\alpha} u$)
\begin{align*}
 g^{kij} \omega_{i}\omega_{j}T_{k}v
 &= g^{1ij} \omega_{i}\omega_{j}(\omega_{1}\pa_{+}v-\frac{\omega_{2}}{r}\pa_{\theta}v)
  +g^{2ij} \omega_{i}\omega_{j}(\omega_{2}\pa_{+}v+\frac{\omega_{1}}{r}\pa_{\theta}v)\\
  &= (g^{1ij} \omega_{1}\omega_{i}\omega_{j}+g^{2ij} \omega_{2}\omega_{i}\omega_{j})\partial_+v +\omega_i\omega_j (g^{2ij}\omega_1
  -g^{1ij} \omega_2) \frac 1 r \partial_{\theta} v \notag \\
  &=: h_1(\theta) \partial_+ v + h_2(\theta) \frac 1 r \partial_{\theta} v.
  \end{align*}
We decompose $Y_1$ accordingly as
\begin{align}
Y_1&= \int h_1(\theta) \partial_+ v \partial_t v \partial_{tt} u e^p 
+ \int h_2(\theta) \frac 1 r \partial_{\theta} v \partial_t v \partial_{tt} u e^p  \notag \\
& =: Y_{A} +Y_B.
\end{align}

Step 4.  Estimate of $Y_A$: localization, further decomposition and normal form transformation.  We use a bump function
$\phi$ which is localized to $r\in [\frac t2,  2t]$ such that the main part of $Y_A$ becomes
\begin{align} \label{e1.19a0}
\int h(\theta) \partial_+ v \partial_t v \partial_{tt}u e^p \phi.
\end{align}
The contribution of the regimes $r\le \frac t2$ and $r>2t$ corresponding to the cut-off $1-\phi$ can be shown to be negligible. 
We further use the decomposition $\partial_t = \frac {\partial_++\partial_-}2$ to transform
\eqref{e1.19a0} as 
\begin{align} \label{1.19A}
\frac 12 \int h(\theta) \partial_+ v \partial_- v \partial_{tt}u e^p \phi +\OK. 
\end{align}
At this point, the crucial observation is to use the fundamental identity 
%\begin{align*}
$
\partial_+ \partial_- =
\square +\frac 1r {\partial_r} +\frac 1 {r^2} {\partial_{\theta\theta}}
$
%\end{align*}
to transform \eqref{1.19A} into an expression which contains an ``inflated" nonlinearity. After this
novel normal form type transformation and further technical estimates
 the term $Y_A$ can be shown to be under control. 

Step 5. Estimate of $Y_B$: localization and further transformation. By using the 
estimate $\| \langle r -t \rangle \partial_{tt} u \|_{\infty} \lesssim t^{-\frac 12}$ (see Lemma 
\ref{lem2.6}), we have
\begin{align}
Y_B= \OK + \int h_2 (\theta) \frac 1r \partial_{\theta} v \partial_{tt} u \partial_t v
\tilde \phi  (\frac x t) e^p,
\end{align}
where $\tilde \phi$ is a radial bump function localized to $|z|\sim 1$.  Denote $\phi(z) = \frac 1 {|z|} \tilde \phi(z)$. 
Using integration by parts in $\theta$, we obtain
\begin{align}
Y_B= \OK + \frac 1 t \int h_2(\theta) v \partial_{tt} u \partial_t \partial_{\theta} v 
\phi(\frac x t) e^p.
\end{align}
We then proceed to bound the second term (without the $\frac 1t$ factor) as 
\begin{align}
&\int  e^p \phi(\frac x t) h_2(\theta) v \partial_{tt} u \partial_t \partial_{\theta} v  \notag \\
\lesssim &
\Bigl( \| e^p \phi(\frac x t) h_2(\theta) v \partial_{tt} u \|_2 + \| \nabla ( e^p \phi(\frac x t) h_2(\theta) v \partial_{tt} u
)\|_2 \Bigr)\cdot \| \langle \nabla \rangle^{-1} \partial_t \partial_{\theta} v \|_2 \notag \\
\lesssim &\; t^{-\frac 12} E_m^{\frac 12} \| \langle \nabla 
\rangle^{-1} \partial_t \partial_{\theta} v \|_2.
\end{align}
By Lemma \ref{lem_Nonlocal1} the norm $\| \langle \nabla \rangle^{-1} \partial_t \partial_{\theta}
v \|_2$ is well-defined.
The employment of the nonlocal norm $\| \langle \nabla \rangle^{-1} \partial_t \partial_{\theta} v\|_2$
is the key to obtaining sufficient time-decay estimates of $Y_B$.  In
the next step we show $\|\langle \nabla \rangle^{-1} \partial_t \partial_{\theta} v\|_2
\lesssim t^{\delta}$ for some $\delta<\frac 12$ which suffices for time-integrability.

Step 6. Estimate of  $\| \langle  \nabla \rangle^{-1} \partial_t 
\Gamma^{\le m+1} u \|_2$.  This is the most technical part of the proof.  Due to nonlocality 
we work with a frequency localized energy which in the main order is given by
\begin{align}
\tilde E_m =\sum_{J\ge 0} \sum_{|\beta|\le m+1}
2^{-2J} \| e^{\frac p2} \partial P_J \Gamma^{\beta} u \|_2^2,
\end{align}
where $(P_J)_{J\ge 0}$ are the Littlewood-Paley frequency projection operators. 
By using a number of delicate commutator estimates and deeply exploiting the null form
structure, we show $\tilde E_m \lesssim t^{0+}$ which is just enough for closing the uniform
estimates. Here $t^{0+}$ means $t^{c}$ for some sufficiently small exponent $c>0$.

The rest of this paper is organized as follows. In Section 2 we collect some preliminaries and useful
lemmas. In Section 3, 4 and 5 we give the proof of Theorem \ref{thm:main}.

\subsection*{Acknowledgement.}
The author is supported in part by NSFC 12271236. 
%We would like to thank Zha Dongbing for some helpful comments.

\section{Preliminaries}

\subsection*{Notation}
For any two quantities $A$, $B\ge 0$, we write  $A\lesssim B$ if $A\le CB$ for some unimportant constant $C>0$. 
%We write $A\lesssim_{Z_1,\cdots,Z_k} B$ if $A\le CB$ where $C>0$ depends on
%the parameters ($Z_1,\cdots, Z_k$).
We write $A\sim B$ if $A\lesssim B$ and $B\lesssim A$. We write $A\ll B$ if
$A\le c B$ and $c>0$ is a sufficiently small constant. The needed smallness is clear from the context.

We shall use the Japanese bracket notation: $ \langle x \rangle = \sqrt{1+|x|^2}$, for $x \in \mathbb R^2$.  For $s\in \mathbb R$, we  denote the smoothed fractional Laplacian 
$\langle \nabla \rangle^s=(I-\Delta)^{s/2}$ which corresponds to the Fourier
multiplier $(1+|\xi|^2)^{s/2}$.

We denote  $\partial_0 = \partial_t$, 
$\partial_i = \partial_{x_i}$, $i=1,2$ and (below $\partial_{\theta}$ and
$\partial_r$ correspond to the usual polar coordinates) 
\begin{align}
& \partial = (\partial_i)_{i=0}^2, \; \partial_{\theta}=x_1 \partial_2 -x_2\partial_1,
\; L_0 = t \partial_t + r \partial_r; \\
& \Gamma= (\Gamma_i )_{i=1}^5, \quad\text{where } \Gamma_1 =\partial_t, \Gamma_2=\partial_1,
\Gamma_3= \partial_2, \Gamma_4= \partial_{\theta}, \Gamma_5=L_0; 
\label{def_Gamma}\\
& \Gamma^{\alpha} =\Gamma_1^{\alpha_1} \Gamma_2^{\alpha_2}
\Gamma_3^{\alpha_3} \Gamma_4^{\alpha_4}\Gamma_5^{\alpha_5}, \qquad \text{$\alpha=(\alpha_1,\cdots, \alpha_5)$ is a multi-index}; \label{def_Gamma1}\\
& \partial_+ =\partial_t + \partial_r, \qquad \partial_- =\partial_t - \partial_r; \\
& T_i = \omega_i \partial_t + \partial_i, \; \omega_0=-1, \; \omega_i=x_i/r, \, i=1,2.
\label{2.5a}
\end{align}
Note that in \eqref{def_Gamma} we do not include the Lorentz boosts. Note that $T_0=0$. 
For simplicity of notation,
we define for any integer $k\ge 1$,  $\Gamma^k = (\Gamma^{\alpha})_{|\alpha|=k}$,
$\Gamma^{\le k} =(\Gamma^{\alpha})_{|\alpha|\le k}$. 
In particular
\begin{align}
|\Gamma^{\le k} u | = (\sum_{|\alpha|\le k} |\Gamma^{\alpha} u |^2)^{\frac 12}.
\end{align}
Informally speaking, it is useful to think of  $\Gamma^{\le k} $ as any one of the vector
fields $ \Gamma^{\alpha}$ with $|\alpha| \le k$. 

 For integer $ I\ge 3$, we shall denote 
\begin{align}
E_{I} = E_{I} (u(t,\cdot)) = \| (\partial \Gamma^{\le I} u)(t,\cdot) \|_{L_x^2(\mathbb R^2)}^2.
\end{align}

In Section 5 of this paper we will need to use the Littlewood--Paley (LP) frequency projection
operators. To fix the notation, let $\phi_0$ be a radial function in
$C_c^\infty(\mathbb{R}^2 )$ and satisfy
\begin{equation}\nonumber
0 \leq \phi_0 \leq 1,\quad \phi_0(\xi) = 1\ {\text{ for}}\ |\xi| \leq
1,\quad \phi_0(\xi) = 0\ {\text{ for}}\ |\xi| \geq 7/6.
\end{equation}
Let $\phi(\xi):= \phi_0(\xi) - \phi_0(2\xi)$ which is supported in $\frac 12 \le |\xi| \le \frac 76$.
For any Schwartz function $f \in \mathcal S(\mathbb R^n)$, $j \in \mathbb Z$, define
\begin{align} 
 &\widehat{P_0 f} (\xi) = \phi_0( \xi) \hat f(\xi);  \notag \\
 &\widehat{P_j f} (\xi) = \phi(2^{-j} \xi) \hat f(\xi), \qquad \xi \in \mathbb R^2, j \ge 1; \notag\\
 & \widehat{P_{\le j} f }(\xi)
 = \phi_0( 2^{-j} \xi) \hat f (\xi), \qquad \xi \in \mathbb R^2, \label{LP_def1}
 j\ge 1.
\end{align}
Note that for $j\ge 1$, 
$P_jf $ is supported in the annulus 
$ \frac 12 \cdot 2^j \le |\xi| \le \frac 7 6 \cdot 2^j $.

More generally, one can take  $\psi \in C_c^{\infty}(\mathbb R^2)$ with compact support in $\{ \xi: 
a_1<|\xi| <a_2\}$, and  $0<a_1<a_2<\infty$ are constants. To spell out the explicit dependence
on $\psi$, one can define for $j\ge 0$
\begin{align}
\widehat{P^{\psi}_j  f}(\xi) = \psi( 2^{-j} \xi) \widehat f (\xi).
\end{align}
In this way $P^{\psi}_j$ is a smooth frequency cut-off localized to $|\xi| \sim 2^j$. In later computations
we often write $\tilde P_j = P^{\psi}_j$ where $\psi$ may vary from line to line.  This notation is convenient
for intermediate calculations.

For $j\ge 2$, we will denote $P_{<j} = P_{\le j-1}$,  $P_{>j} = I-P_{\le j}$ ($I$ is the identity operator),
$P_{\ge j} = I - P_{\le j-1}$. 

We begin with the following innocuous lemma which justifies the legitimacy of the norm
$\| \langle \nabla \rangle^{-1} \partial \Gamma^{\le m+1} u \|_2$. 

\begin{lem}[The nonlocal norm is well-defined]  \label{lem_Nonlocal1}
Let $u$ be the solution to \eqref{eq:we2d}. We have
\begin{align}
\| \langle \nabla \rangle^{-1} \partial \Gamma^{\le m+1} u 
\Bigr|_{t=2} \|_{L^2(\mathbb R^2)} \le D_1,
\end{align}
where $D_1>0$ is a finite constant depending on  $\| f_1 \|_{H^{m+1}(\mathbb R^2)}$ and  $\| f_2 \|_{H^m (\mathbb R^2)}$. 
\end{lem}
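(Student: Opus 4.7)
The plan is to reduce the estimate at the Cauchy surface $\{t=2\}$ to a statement purely in terms of spatial derivatives of $f_1, f_2$, and then to exploit the one-derivative smoothing afforded by $\langle \nabla \rangle^{-1}$ to match exactly the regularity of the data. First, at $t=2$ the equation \eqref{eq:we2d} allows every pure time derivative to be rewritten via the identity $\partial_{tt}u=\Delta u+g^{kij}\partial_k u\,\partial_{ij}u$. Iterating this and using $(u,\partial_t u)|_{t=2}=(f_1,f_2)$ yields, by induction on $k\ge 2$,
\[
\partial_t^k u\bigr|_{t=2}=\mathcal{L}_k(f_1,f_2)+\mathcal{N}_k(f_1,f_2),
\]
where $\mathcal{L}_k$ is a linear combination of spatial derivatives of $f_1$ of order $\le k$ and of $f_2$ of order $\le k-1$, while $\mathcal{N}_k$ is a polynomial in spatial derivatives of $f_1, f_2$ of order $\le k-1$ coming from the quasilinear piece. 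Both are supported in $\{|x|\le 1\}$.

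Second, since $\partial_\theta=x_1\partial_2-x_2\partial_1$ and $L_0\big|_{t=2}=2\partial_t+x_1\partial_1+x_2\partial_2$ have polynomial coefficients in $x$, any $\Gamma^\alpha$ with $|\alpha|\le m+1$ expands at $t=2$ into a finite sum of monomials $c_\beta(x)\,\partial_t^{\beta_0}\partial_{x_1}^{\beta_1}\partial_{x_2}^{\beta_2}$ with $|\beta|\le|\alpha|$. Applying one more $\partial\in\{\partial_t,\partial_{x_1},\partial_{x_2}\}$, evaluating at $t=2$, and invoking the first step to eliminate all time derivatives, I obtain a representation
\[
\partial\Gamma^\alpha u\bigr|_{t=2}=\sum_{|\gamma|\le m+2}c_\gamma(x)\,\partial_x^\gamma f_{j(\gamma)}\;+\;\widetilde{\mathcal{N}},
\]
where the $c_\gamma$ are polynomials in $x$ (hence bounded on the common support) and $\widetilde{\mathcal N}$ is a polynomial in spatial derivatives of $(f_1,f_2)$ of total order $\le m+1$.

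Third, I apply $\langle\nabla\rangle^{-1}$ and use that $\langle\nabla\rangle^{-1}\partial_{x_i}$ ($i=1,2$) are Fourier multipliers bounded on $L^2(\R^2)$. For the top-order linear terms this gains exactly one spatial derivative, giving $\|\langle\nabla\rangle^{-1}c_\gamma(x)\partial_x^\gamma f_1\|_2\lesssim\|f_1\|_{H^{m+1}}$ when $j(\gamma)=1$ and the analogous bound $\lesssim\|f_2\|_{H^m}$ when $j(\gamma)=2$, matching the regularity hypothesis precisely. The nonlinear contribution $\widetilde{\mathcal N}$ is estimated directly in $L^2$ by standard Moser-type product estimates together with Sobolev embedding in $\R^2$, which are comfortable for $m\ge 5$ and yield a polynomial dependence on $\|f_1\|_{H^{m+1}}+\|f_2\|_{H^m}$. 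The main care is bookkeeping in the first step: one must verify that each substitution $\partial_{tt}\to\Delta+(\text{quadratic})$ trades two time derivatives for two spatial ones without raising total order, so that the top-order linear term never exceeds the $H^{m+1}$/$H^m$ regularity budget. This is where the one-derivative gain from $\langle\nabla\rangle^{-1}$ is indispensable, and where I would expect the only genuine technical care, though it is ultimately bookkeeping rather than a conceptual obstacle.
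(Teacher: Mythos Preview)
Your approach is essentially the paper's: expand $\Gamma^\alpha$ at $t=2$ into polynomial-coefficient pure partial derivatives (the paper replaces those polynomial weights by $C_c^\infty$ cutoffs using the compact support of the data), use the equation to eliminate $\partial_t$, and recognize the outcome as $F^{(0)}+\sum_{j=1}^2\partial_j F^{(j)}$ with each $F^{(j)}\in L^2$, so that $\langle\nabla\rangle^{-1}$ controls everything. One small bookkeeping correction: $\mathcal N_k$ can contain a factor with up to $k$ (not $k-1$) spatial derivatives---already $\mathcal N_2=g^{kij}\partial_k u\,\partial_{ij}u\big|_{t=2}$ has a second-order factor---so $\widetilde{\mathcal N}$ may carry a factor of order $m+2$ rather than $m+1$; this is harmless since that top factor is in divergence form and the remaining factors are low order, hence the same $F^{(0)}+\partial_j F^{(j)}$ structure and $\langle\nabla\rangle^{-1}$ gain still apply.
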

\begin{proof}
Clearly we only need to consider the case $\| \langle \nabla \rangle^{-1} \partial_t
\Gamma^{\le m+1} u \Bigr|_{t=2} \|_2$. 
Since $\Gamma= \{ \partial_t, \partial_1, \partial_2, x^{\perp}\cdot \nabla, 
t \partial_t + x \cdot \nabla \}$ and $f_1$, $f_2$ are both compactly supported, 
we have
\begin{align}
\| \langle \nabla \rangle^{-1} \partial_t \Gamma^{\le m+1} u 
\Bigr|_{t=2} \|_{L^2(\mathbb R^2)} \lesssim 
\sum_{m_1+m_2+m_3\le m+2}
\| \langle \nabla \rangle^{-1}
\left( \phi_{m_1,m_2,m_3}(x)  \cdot \partial_t^{m_1} \partial_1^{m_2} \partial_2^{m_3} u
\Bigr|_{t=2} \right) \|_{L^2(\mathbb R^2)},
\end{align}
where $\phi_{m_1,m_2,m_3} \in C_c^{\infty}(\mathbb R^2)$.  It is not difficult to check that
for each ($m_1$, $m_2$, $m_3$),  we have
\begin{align}
\partial_t^{m_1} \partial_1^{m_2} \partial_2^{m_3} u
\Bigr|_{t=2} =   F_{m_1,m_2,m_3}^{(0)} + \sum_{j=1}^2 \partial_j
F_{m_1,m_2,m_3}^{(j)},
\end{align}
where
\begin{align}
\sum_{j=0}^2 \| F_{m_1,m_2,m_3}^{(j)} \|_{L^2(\mathbb R^2)} 
\le D_{m_1,m_2,m_3} <\infty,
\end{align}
and $D_{m_1,m_2,m_3}>0$ are constants depending on 
$\|f_1\|_{H^{m+1}(\mathbb R^2)}$ and $\|f_2\|_{H^m(\mathbb R^2)}$. 
The desired result follows.
\end{proof}

 \begin{lem}[Sobolev decay]\label{lem:S}
For $v\in \mathcal S(\R^2)$, we have
\begin{align*}
  \sup_{x\in \mathbb R^2} (|x|^{\frac 12} | v(x)| )\lesssim  \|\pa_{\theta}^{\le1}\pa_{r}^{\le 1} v\|_{2}=\|v \|_{2}+ \|\pa_{r}v\|_{2}
  +\|\pa_{\theta}v\|_{2}+\|\pa_{r}\pa_{\theta}v\|_{2}.
 \end{align*}
  \end{lem}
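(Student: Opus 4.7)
The estimate is the classical weighted Sobolev trace inequality in $\mathbb R^2$, which extracts $|x|^{1/2}$ decay from an $H^1$-type angular-radial norm. My plan is to work in polar coordinates and combine a one-dimensional Sobolev embedding on the circle with a weighted fundamental-theorem-of-calculus identity in the radial direction.

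First, for each fixed $r>0$, regard $v(r,\cdot)$ as a function on $S^1$. By the standard one-dimensional Sobolev embedding on the torus, for $w\in H^1(S^1)$ one has $\|w\|_{L^\infty(S^1)}^2\lesssim \|w\|_{L^2(S^1)}^2+\|w'\|_{L^2(S^1)}^2$. Applied to $w(\theta)=v(r,\theta)$ and multiplied by $r$, this yields
\begin{equation*}
r\sup_{\theta}|v(r,\theta)|^2\lesssim r\int_0^{2\pi}|v(r,\theta)|^2\,d\theta+r\int_0^{2\pi}|\pa_\theta v(r,\theta)|^2\,d\theta.
\end{equation*}
It therefore suffices to prove, for any Schwartz function $w$ on $\R^2$, a radial trace bound of the form
\begin{equation*}
r\int_0^{2\pi}|w(r,\theta)|^2\,d\theta\lesssim \|w\|_{L^2(\R^2)}\,\|\pa_r w\|_{L^2(\R^2)},
\end{equation*}
since applying this to $w=v$ and $w=\pa_\theta v$ will finish the argument.

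To establish the radial trace bound, I fix $\theta$ and use the identity $\pa_s(s|w(s,\theta)|^2)=|w|^2+2sw\,\pa_sw$. Since $s|w(s,\theta)|^2\to 0$ as $s\to\infty$ for Schwartz $w$,
\begin{equation*}
r|w(r,\theta)|^2=-\int_r^{\infty}|w(s,\theta)|^2\,ds-2\int_r^{\infty}sw(s,\theta)\pa_sw(s,\theta)\,ds.
\end{equation*}
Moving the manifestly nonnegative unweighted integral to the left and applying Cauchy--Schwarz in $s$ with weight $s\,ds$ on the right gives
\begin{equation*}
r|w(r,\theta)|^2\le 2\Bigl(\int_0^{\infty}|w|^2\,s\,ds\Bigr)^{1/2}\Bigl(\int_0^{\infty}|\pa_sw|^2\,s\,ds\Bigr)^{1/2}.
\end{equation*}
Integrating in $\theta$ and applying Cauchy--Schwarz in $\theta$ converts the two factors to the full polar $L^2(\R^2)$ norms, yielding the desired radial trace estimate.

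Combining the two steps, taking the supremum over $r$, and using $\sqrt{ab}\le a+b$ to convert products into sums, one obtains $\sup_x |x|\,|v(x)|^2\lesssim \|v\|_2^2+\|\pa_rv\|_2^2+\|\pa_\theta v\|_2^2+\|\pa_r\pa_\theta v\|_2^2$, and taking a square root gives the lemma. There is essentially no hard step; the only subtlety is choosing the correct anti-derivative so that the unweighted term $\int_r^{\infty}|w|^2\,ds$ appears with a favorable sign and can be absorbed, rather than estimated directly with the wrong weight.
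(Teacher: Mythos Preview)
Your proof is correct and follows essentially the same approach as the paper: a one-dimensional angular estimate on the circle combined with a radial trace inequality of the form $r|h(r)|^2 \lesssim \int_0^\infty (|h|^2 + |h'|^2)\,s\,ds$. The only cosmetic difference is that the paper handles the angular variable by splitting $v = \bar v + (v-\bar v)$ and invoking Poincar\'e on the mean-free part, whereas you invoke the Sobolev embedding $H^1(S^1)\hookrightarrow L^\infty(S^1)$ directly; both reduce to bounding $r\int_0^{2\pi}|v|^2\,d\theta$ and $r\int_0^{2\pi}|\partial_\theta v|^2\,d\theta$ by the same radial trace estimate.
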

\begin{proof}
 For a one-variable smooth function $h$ which decays sufficiently fast at the spatial infinity, we have
\begin{equation}\label{Sb-1}
    \rho|h(\rho)|^2 \le \int_0^{\infty} |h(r)|^2 r dr + \int_0^{\infty} |h^{\prime}(r)|^2 r dr, \quad
    \forall\, \rho>0.
\end{equation}
It follows that (below we slightly abuse the notation and denote $v(\rho,\theta)=v(x)$ for
$x=(\rho\cos\theta, \rho \sin\theta)$)
\begin{align*}
  \rho\|\pa_{\theta}v(\rho,\theta)\|_{L^2_\theta}^2
  =\rho\int_{0}^{2\pi}|\pa_{\theta}v(\rho,\theta)|^2 d\theta
  \lesssim\ \|\pa_{\theta}v \|_{L^2(\mathbb R^2)}^2+\|\pa_{r}\pa_{\theta}v \|_{L^2(\mathbb R^2)}^2.
\end{align*}
Denote $\bar v(\rho)$ as the average of $v(\rho,\theta)$ over $\theta$. By \eqref{Sb-1}, we have
\begin{align*}
  \rho|\bar{v}(\rho)|^2
  ={\rho}\left( \frac 1 {2\pi}\int_{0}^{2\pi}v(\rho,\theta) d\theta\right)^2
\lesssim \rho \int_0^{2\pi} v(\rho,\theta)^2 d\theta
  \lesssim \, \|v \|_{L^2(\mathbb R^2) }^2+\|\pa_{r}v \|_{L^2(\mathbb R^2)}^2.
\end{align*}

Note that $|v(\rho,\theta)-\bar{v} (\rho)|^2\lesssim|\pa_{\theta}v|_{L^2_{\theta}}^2$ by the Poincar\'e inequality. Thus
\begin{equation*}
  |x||v(x)|^2
  \lesssim\, \|v \|_{2}^2+ \|\pa_{r}v\|_{2}^2
  +\|\pa_{\theta}v\|_{2}^2+\|\pa_{r}\pa_{\theta}v\|_{2}^2.
\end{equation*}
\end{proof}

\begin{lem}[Refined Hardy's inequality]\label{lem:Hardy}
For any real-valued $h\in C_c^{\infty}([0, M+1))$ with $M>0$, we have
\begin{align} \label{2.6.0a}
\int_0^{M+1} \frac {h(\rho)^2} {(2+M-\rho)^2} \rho d\rho 
\le  \, 4\int_0^{\infty} (h^{\prime}(\rho) )^2 \rho d \rho.
\end{align}

For $u\in C^{\infty}([0,T]\times \R^2)$ with support in $\{(t,x): |x|\leq 1+t\}$, we have
\begin{align*}
 \|\langle |x|-t\rangle^{-1} u \|_{L_x^2(\mathbb R^2) }&\lesssim \|\pa_r u\|_{L_x^2(\mathbb R^2)},
  \qquad     \langle |x|-t\rangle^{-1} |u(t,x)|\lesssim 
  \langle x\rangle^{-\frac 12}  \| \partial \Gamma^{\le 1} u \|_{L_x^2(\mathbb R^2)}. 
\end{align*}

\end{lem}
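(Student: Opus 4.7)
My plan is to first prove the one-dimensional estimate \eqref{2.6.0a}; the two-dimensional statements then reduce to polar coordinates and Sobolev bookkeeping. The key identity is $\frac{d}{d\rho}(2+M-\rho)^{-1}=(2+M-\rho)^{-2}$. Setting $I:=\int_0^{M+1}h(\rho)^2(2+M-\rho)^{-2}\rho\,d\rho$ and integrating by parts by pairing $h^2\rho$ against $d(2+M-\rho)^{-1}$, the boundary term at $\rho=0$ vanishes due to the factor $\rho$ and the one at $\rho=M+1$ vanishes by compact support, leaving
\[
I=-\int_0^{M+1}\frac{h^2+2\rho h h'}{2+M-\rho}\,d\rho.
\]
Since $h^2/(2+M-\rho)\ge 0$ on $[0,M+1)$, I would drop it and then apply Cauchy--Schwarz with the split $\rho/(2+M-\rho)=\rho^{1/2}\cdot\rho^{1/2}/(2+M-\rho)$, obtaining $I\le 2I^{1/2}\bigl(\int_0^\infty(h')^2\rho\,d\rho\bigr)^{1/2}$. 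This self-improves to the claimed constant $4$.

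For the weighted $L^2$ bound, the support condition forces $|x|\le 1+t$ on $\supp u$, and there $\langle|x|-t\rangle\sim 2+t-|x|$, so it suffices to use $(2+t-|x|)^{-1}$ in place of $\langle|x|-t\rangle^{-1}$. Writing the $L^2_x$ norm in polar coordinates, freezing $\theta$, and setting $h(\rho):=u(t,\rho\cos\theta,\rho\sin\theta)$, smoothness of $u$ together with the support condition forces $h$ and all its derivatives to vanish at $\rho=t+1$, so $h\in C_c^\infty([0,t+1))$. Applying the 1D inequality with $M=t$ and integrating in $\theta$ yields $\|\langle|x|-t\rangle^{-1}u\|_{L^2_x}\lesssim\|\partial_r u\|_{L^2_x}$.

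For the pointwise estimate, I would apply the Sobolev decay Lemma \ref{lem:S} to $v(x):=\langle|x|-t\rangle^{-1}u(t,x)$. Since $\langle|x|-t\rangle^{-1}$ is radial, $\partial_\theta$ commutes with the weight, while $\partial_r\langle r-t\rangle^{-1}=-(r-t)\langle r-t\rangle^{-3}$ is a bounded multiple of $\langle r-t\rangle^{-1}$. Hence each of the four norms $\|\partial_r^{\le 1}\partial_\theta^{\le 1}v\|_2$ reduces, via the weighted $L^2$ inequality just proved applied to $u$ and $\partial_\theta u$, to an expression controlled by $\|\partial u\|_2+\|\partial\partial_\theta u\|_2\lesssim\|\partial\Gamma^{\le 1}u\|_2$, using $\partial_\theta=\Gamma_4$ and $|\partial_r w|\le|\nabla w|$. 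The regime $|x|\lesssim 1$, where the factor $|x|^{1/2}$ from Lemma \ref{lem:S} is too weak, is handled separately: for large $t$ finite propagation makes $u$ vanish near the origin, and for bounded $t$ a direct Sobolev embedding of the compactly supported $v$ on a fixed ball gives the required control, producing the stated $\langle x\rangle^{-1/2}$ factor.

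The only conceptually nontrivial step is the self-improving Cauchy--Schwarz argument in the 1D Hardy inequality; the remainder is a sequence of reductions via polar coordinates, radial commutators with the weight, and the dictionary between $\partial_r,\partial_\theta$ and the admissible fields in $\Gamma$.
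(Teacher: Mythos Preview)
Your argument for the one-dimensional Hardy inequality \eqref{2.6.0a} and for the weighted $L^2$ bound is correct and matches the paper's approach exactly. Your treatment of the pointwise estimate in the regime $|x|\gtrsim 1$ via Lemma~\ref{lem:S} is also the same as the paper's.

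The gap is in your handling of the regime $|x|\lesssim 1$ for the pointwise estimate. Your claim that ``for large $t$ finite propagation makes $u$ vanish near the origin'' is not justified by the hypotheses: the lemma only assumes $\supp u\subset\{|x|\le 1+t\}$, which is an \emph{upper} bound on $|x|$ and says nothing about vanishing near the origin. Even if one restricts attention to solutions of the wave equation \eqref{eq:we2d}, there is no strong Huygens principle in two space dimensions, so the solution need not vanish inside the light cone. Thus the large-$t$ case is not covered.

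The paper's fix is short: for $|x|\le 1$ one has $\langle |x|-t\rangle^{-1}\sim\langle t\rangle^{-1}$, so by the local Sobolev embedding $H^2\hookrightarrow L^\infty$,
\[
\langle |x|-t\rangle^{-1}|u(t,x)|\lesssim \langle t\rangle^{-1}\bigl(\|u\|_{L^2(|x|\le 1)}+\|\partial^2 u\|_{L^2(|x|\le 1)}\bigr).
\]
The first term is bounded by $\|\langle |x|-t\rangle^{-1}u\|_{L^2_x}\lesssim\|\partial_r u\|_2$ via the weighted $L^2$ inequality you already proved (since $\langle t\rangle^{-1}\lesssim\langle |x|-t\rangle^{-1}$ on $|x|\le 1$), and the second is trivially bounded by $\|\partial\Gamma^{\le 1}u\|_2$. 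This works uniformly in $t$ and requires no vanishing assumption.
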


\begin{proof}
The inequality \eqref{2.6.0a} follows from integrating by parts:
\begin{align}
\text{LHS of \eqref{2.6.0a}}= - \int_0^{M+1}
\frac{h^2} {2+M-\rho} d\rho -\int_0^{M+1} \frac {2hh^{\prime}}{2+M-\rho} \rho d\rho.
\end{align}
The second inequality follows from \eqref{2.6.0a} and the 
fact that $\langle |x|-t\rangle^{-2} \sim (2+t-|x|)^{-2}$ for $|x|\le 1+t$. 
For the third inequality, consider first the case $|x|>1$.  By Lemma \ref{lem:S}, we have
\begin{align}
\langle |x|-t \rangle^{-1} |u(t,x)|
& \lesssim \langle x \rangle^{-\frac 12}  \| \partial_r^{\le 1}
\partial_{\theta}^{\le 1} ( \langle r-t\rangle^{-1} u ) \|_2 \sim
\langle x \rangle^{-\frac 12} \| \partial_r^{\le 1} ( \langle r -t \rangle^{-1} \partial_{\theta}^{\le 1} 
u) \|_2 \notag \\
& \lesssim \langle x \rangle^{-\frac 12} \| \partial_r \partial_{\theta}^{\le 1} u\|_2 
\lesssim \langle x \rangle^{-\frac 12}
 \| \partial \Gamma^{\le 1} u \|_2.
\end{align}
On the other hand, for $|x|\le 1$, we have
\begin{align*}
\langle |x| - t \rangle^{-1} |u(t,x)| 
 & \lesssim \langle t \rangle^{-1}  ( \| u \|_{L^2_x(|x| \le 1)} + \| \partial^2 u \|_{L_x^2(|x|\le 1)} )  \notag \\
 & \lesssim \| \langle |x|- t\rangle^{-1} u \|_{L_x^2(\mathbb R^2)}+ \| \Delta u \|_{L_x^2(\mathbb R^2)} \lesssim \| \nabla u \|_2
 +\|\Delta u\|_2.
 \end{align*}
\end{proof}

\begin{lem} \label{Lem2.3}
If $g^{kij}$ satisfies the null condition, then for $t> 0$  we have
\begin{align} \label{2.10A}
g^{kij} \partial_k f
\partial_{ij} h   = g^{kij}
(T_k  f \partial_{ij} h
-\omega_k \partial_t f T_i \partial_j h
+ \omega_k \omega_i \partial_t f T_j
\partial_t h),
\end{align}
where $T=(T_1,T_2)$ is defined in \eqref{2.5a}.  It follows that
\begin{align}
|g^{kij} \partial_k f \partial_{ij} h| 
& \lesssim | T f | |\partial^2 h| + |\partial f | | T \partial h|  \label{a2.12a}\\
& \lesssim 
\frac 1 {\langle r +t \rangle} 
(|\Gamma f| |\partial^2 h| + |\partial f | |\Gamma \partial h|
+ |\partial f | \cdot |\partial^2 h| \cdot |r -t | ). \label{a2.12b}
\end{align}
Suppose $g^{kij}$ satisfies the null condition and 
$
\square u = g^{kij} \partial_k u \partial_{ij} u.
$
Then for any multi-index $\alpha$, we have
\begin{align} \label{a2.12aa}
\square \Gamma^{\alpha} u  = \sum_{\alpha_1+ \alpha_2 \le \alpha}
g^{kij}_{\alpha;\alpha_1,\alpha_2} \partial_k \Gamma^{\alpha_1} u 
\partial_{ij} \Gamma^{\alpha_2} u,
\end{align}
where for each ($\alpha$, $\alpha_1$, $\alpha_2$), $g^{kij}_{\alpha;\alpha_1,\alpha_2}$ also satisfies the
null condition.  In addition, we have $g^{kij}_{\alpha;\alpha,0} =g^{kij}_{\alpha;0,\alpha}=g^{kij}$.
\end{lem}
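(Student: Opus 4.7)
My plan for the algebraic identity \eqref{2.10A} is to apply the kinematic substitution $\partial_k = T_k - \omega_k \partial_t$ three times; this relation holds for every $k \in \{0,1,2\}$ with the conventions $\omega_0 = -1$, $T_0 = 0$. First substitute into $\partial_k f$ to produce $T_k f\cdot \partial_{ij} h - \omega_k \partial_t f\cdot \partial_{ij} h$. In the second piece, write $\partial_i = T_i - \omega_i \partial_t$ inside $\partial_i\partial_j h$ to obtain $T_i \partial_j h - \omega_i \partial_t \partial_j h$. Finally, in the residual $\omega_k \omega_i \partial_t f \cdot \partial_t \partial_j h$, commute $\partial_t$ past $\partial_j$ and substitute $\partial_j = T_j - \omega_j \partial_t$. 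Collecting yields exactly the three summands on the RHS of \eqref{2.10A}, together with a leftover term whose coefficient is $g^{kij}\omega_k \omega_i \omega_j$, which vanishes by the null condition \eqref{null1}. The bound \eqref{a2.12a} is immediate since each surviving summand carries at least one $T$-factor, and \eqref{a2.12b} follows by inserting the standard pointwise inequality
\[
|T\phi|\ \lesssim\ \frac{|\Gamma \phi| + |r-t|\,|\partial \phi|}{\langle t+r\rangle},
\]
which I would prove from the identity $L_0 = \tfrac{t+r}{2}\partial_+ + \tfrac{t-r}{2}\partial_-$ together with the polar representations $T_1 = \cos\theta\,\partial_+ - r^{-1}\sin\theta\,\partial_\theta$ and $T_2 = \sin\theta\,\partial_+ + r^{-1}\cos\theta\,\partial_\theta$.

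The iterated formula \eqref{a2.12aa} I would prove by induction on $|\alpha|$, with base case $|\alpha| = 0$ being the equation \eqref{eq:we2d}. For the inductive step, apply one $\Gamma_l \in \{\partial_t,\partial_1,\partial_2,\partial_\theta,L_0\}$ to the expansion at level $\alpha$ and use the commutators
\[
[\square,\partial_\mu] = 0,\qquad [\square,\partial_\theta] = 0,\qquad [\square,L_0] = 2\square,
\]
so that $\square \Gamma_l \Gamma^\alpha u$ equals $\Gamma_l$ applied to the nonlinearity plus, at most, a scalar multiple of the same expansion. On the nonlinear side, distribute $\Gamma_l$ by Leibniz and push it through $\partial_k$ and $\partial_{ij}$ using the commutators $[\Gamma_l, \partial_\mu]$, which vanish for translations, act as a constant infinitesimal rotation of the spatial index for $\Gamma_l = \partial_\theta$ ($[\partial_\theta,\partial_1] = -\partial_2$, $[\partial_\theta,\partial_2] = \partial_1$), and reduce to $-\partial_\mu$ for $\Gamma_l = L_0$. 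The output is a finite sum of null-form expressions $\tilde g^{kij}\,\partial_k \Gamma^{\beta_1} u\cdot \partial_{ij}\Gamma^{\beta_2} u$ with $\beta_1 + \beta_2 \le \alpha + e_l$. The extremal contributions, in which the whole of $\Gamma_l \Gamma^\alpha$ lands on a single factor of $u$, keep the coefficient $g^{kij}$ and give $g^{kij}_{\alpha';\alpha',0} = g^{kij}_{\alpha';0,\alpha'} = g^{kij}$.

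The main difficulty will be verifying that every new coefficient $g^{kij}_{\alpha';\beta_1,\beta_2}$ still satisfies \eqref{null1}. The only nontrivial case is the commutator with $\partial_\theta$, which replaces $g^{kij}$ by a constant linear combination obtained by rotating the spatial indices. The clean way to handle this is to observe that the cone $\{g^{kij}:g^{kij}\omega_k \omega_i \omega_j = 0 \text{ for every null }\omega\}$ is invariant under the $SO(2)$ action on the spatial indices: rotating the coefficients by an angle $\phi$ is equivalent to evaluating the original condition on the rotated null vector $(-1,\cos(\theta_* + \phi),\sin(\theta_* + \phi))$, which is again null. Scaling via $L_0$ only rescales the indices and translations act trivially, so both preserve the null condition on the nose. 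Closure of the null class under each of these actions then carries the induction through.
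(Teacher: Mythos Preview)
Your proposal is correct and follows essentially the same approach as the paper. For \eqref{2.10A} the paper likewise iterates the substitution $\partial_l = T_l - \omega_l \partial_t$ and invokes the null condition at the end; for \eqref{a2.12b} the paper uses the same polar identities $T_1 = \omega_1\partial_+ - \tfrac{\omega_2}{r}\partial_\theta$, $T_2 = \omega_2\partial_+ + \tfrac{\omega_1}{r}\partial_\theta$ and $\partial_+ = \tfrac{1}{t+r}(2L_0 - (t-r)\partial_-)$ in the regime $r\sim t\gtrsim 1$, while noting that away from the cone (or when $r\sim t\lesssim 1$) the estimate is trivial because $\langle r+t\rangle \sim \langle r-t\rangle$; and for \eqref{a2.12aa} the paper simply cites H\"ormander, whereas you spell out the induction and the $SO(2)$-invariance argument for preservation of the null condition, which is a correct and welcome elaboration.
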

\begin{proof}
The identity \eqref{2.10A} follows by applying repeatedly the identity $\partial_l
=T_l - \omega_l \partial_t$ and using the null condition at the last step. 
The inequality \eqref{a2.12b} is obvious if $r\le \frac t2$ or $r\ge 2t$, or $r \sim t \lesssim 1$ since
$\langle r +t \rangle \sim \langle r-t \rangle $ in these regimes. On the other hand, if
$r\sim t\gtrsim 1$,  then one can use the identities
\begin{align}
&T_1 = \omega_1 \partial_+ -\frac {\omega_2} r \partial_\theta, \quad
T_2 = \omega_2 \partial_+ +\frac {\omega_1} r \partial_\theta ;\quad
 \partial_+ = \frac 1 {t+r} ( 2 L_0 - (t-r) \partial_-).
\end{align}
The identity \eqref{a2.12aa} follows from H\"ormander \cite{Hor97}.
\end{proof}

\begin{lem} \label{lem2.5A}
For any $f \in \mathcal S(\mathbb R^2)$, we have
\begin{align} 
&\sup_{x_0 \in \mathbb R^2} \langle |x_0| -t \rangle^{\frac 12} |f(x_0)| \lesssim \|f\|_2+\| \langle |x|-t\rangle \nabla f
\|_2 + \| \langle |x|-t\rangle \partial_1 \partial_2 f\|_2, 
\qquad \forall\, t\ge 0;  \label{2.26A}\\
& \| \langle |x| -t \rangle \partial f \|_{\infty} 
\lesssim \| \langle |x|-t\rangle \partial f \|_2 + \| \langle |x| -t\rangle \partial^2 f \|_2
+ \| \langle |x| -t \rangle \partial^3 f \|_2, \, \quad\forall\, t\ge 0.
\label{2.26B}
\end{align}
It follows that
\begin{align} 
&\| f \|_{L_x^{\infty}(\mathbb R^2)} \lesssim
\langle t \rangle^{-\frac 12}
(\|f\|_2 + \| \partial_{\theta} f\|_2+ \| \langle |x| -t\rangle \nabla \tilde \Gamma^{\le 1 } f \|_2), \qquad\forall\, t\ge 0, 
\label{2.27A}
\end{align}
where $\tilde \Gamma =(\partial_1, \partial_2, \partial_{\theta})$. 
\end{lem}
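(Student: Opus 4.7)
My plan is to reduce all three bounds to the elementary two-dimensional Sobolev inequality
\[
|g(x_0)|^2 \lesssim \|\partial_1 g\|_2 \|\partial_2 g\|_2 + \|g\|_2 \|\partial_1 \partial_2 g\|_2 \quad\text{for } g\in\mathcal S(\mathbb R^2),
\]
which follows from $g(x_0)^2=\int_{y_1\le x_1^0}\int_{y_2\le x_2^0}\partial_1\partial_2(g^2)\,dy_1\,dy_2$ and Cauchy--Schwarz.

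For \eqref{2.26A}, fix $x_0$, set $\lambda:=\langle|x_0|-t\rangle$, and apply the above to $g:=\chi f$, where $\chi(x)=\phi(\lambda^{-1}(x-x_0))$ is a standard bump with $\phi(0)=1$ and $\supp\phi\subset\{|y|\le 1/2\}$. A brief case check (separately for $|x_0|-t\ge 2$, $|x_0|-t\le -2$, and $||x_0|-t|\le 2$) shows $\langle|x|-t\rangle\sim\lambda$ on $\supp\chi$. Each derivative landing on $\chi$ costs a factor $\lambda^{-1}$, while on $\supp\chi$ the bound $\chi/\langle|x|-t\rangle\lesssim\lambda^{-1}$ lets us convert each $\|\chi\,\partial^j f\|_2$ into $\lambda^{-1}\|\langle|x|-t\rangle\,\partial^j f\|_2$. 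Assembling the pieces and applying AM--GM yields $\lambda|f(x_0)|^2\lesssim(\|f\|_2+\|\langle|x|-t\rangle\nabla f\|_2+\|\langle|x|-t\rangle\partial_1\partial_2 f\|_2)^2$, which is \eqref{2.26A}.

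For \eqref{2.26B}, the cleanest route is to apply the (unweighted) 2D Sobolev inequality to $g:=\langle|x|-t\rangle\partial f$. Since $|\nabla\langle|x|-t\rangle|\le 1$, any derivative hitting the weight contributes an unweighted norm of $\partial f$ or $\partial^2 f$, each absorbed into its weighted counterpart using $\langle|x|-t\rangle\ge 1$. The only subtle point is the singularity $|\partial_1\partial_2\langle|x|-t\rangle|\sim|x|^{-1}$ near the origin; this is dispatched by splitting $f=\eta f+(1-\eta)f$ for a smooth radial cutoff $\eta$ equal to $1$ on $|x|\ge 1$, handling the $(1-\eta)f$ piece by elementary Sobolev on the unit disc and the $\eta f$ piece by the argument above, where the weight is now smooth on the relevant support.

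For \eqref{2.27A} I would split into two regions. When $|x_0|\le t/4$ with $t\ge 2$, use a cutoff $\chi$ supported in $\{|x|\le t/2\}$ with $|\nabla^k\chi|\lesssim\langle t\rangle^{-k}$; here $\langle|x|-t\rangle\sim\langle t\rangle$ on $\supp\chi$, so the localized Sobolev argument of \eqref{2.26A} converts the weighted norms into unweighted ones with an overall gain of $\langle t\rangle^{-1/2}$ (note that $\partial_1\partial_2 f$ is part of $\nabla\tilde\Gamma^{\le 1}f$). When $|x_0|\ge t/4$, invoke Lemma~\ref{lem:S} directly: $|x_0|^{1/2}|f(x_0)|\lesssim\|f\|_2+\|\partial_r f\|_2+\|\partial_\theta f\|_2+\|\partial_r\partial_\theta f\|_2\lesssim\|f\|_2+\|\langle|x|-t\rangle\nabla\tilde\Gamma^{\le 1}f\|_2$, and $|x_0|\gtrsim\langle t\rangle$ closes the bound; the case $t\lesssim 1$ reduces to the standard embedding $H^2(\mathbb R^2)\hookrightarrow L^\infty$. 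I do not anticipate any serious obstacle; the only technical point deserving care is verifying $\langle|x|-t\rangle\sim\lambda$ (resp.\ $\sim\langle t\rangle$) on the support of the cutoff via the case analysis indicated above.
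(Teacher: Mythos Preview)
Your argument for \eqref{2.26A} via a cutoff at scale $\lambda=\langle|x_0|-t\rangle$ is correct and is a genuine alternative to the paper's route. The paper instead introduces the \emph{smooth} equivalent weight $W(x)=\langle|x|^2-t^2\rangle/(\langle x\rangle+t)$, observes that $\sum_{i}\|\partial_i W\|_\infty+\sum_{i,j}\|\partial_i\partial_j W\|_\infty\lesssim 1$ and $W\lesssim\langle|x|-t\rangle$, and applies the Fundamental Theorem of Calculus directly to $W(x)f(x)^2$. Your localization avoids constructing $W$; the paper's $W$ avoids the case analysis on $\supp\chi$. For \eqref{2.27A} your split into $|x_0|\le t/4$ versus $|x_0|\ge t/4$ (using \eqref{2.26A} and Lemma~\ref{lem:S} respectively) is exactly the paper's argument.

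There is, however, a real gap in your treatment of \eqref{2.26B}. The proposed fix for the origin singularity---splitting $f=\eta f+(1-\eta)f$---does not work, because differentiating the cutoff produces terms of the form $(\partial^k\eta)f$ whose $L^2$ norm is $\|f\|_{L^2(\supp\nabla\eta)}$, and this is \emph{not} controlled by the right-hand side of \eqref{2.26B} (which contains no zero-order term in $f$). Concretely, take $f$ equal to a constant $c$ on $|x|\le t$ and smoothly cut off in a unit-width shell near $|x|=t$. Then $\|\langle|x|-t\rangle\partial((1-\eta)f)\|_\infty\sim\langle t\rangle c$ (coming entirely from $(\partial\eta)f$), while the right side of \eqref{2.26B} is only $\sim t^{1/2}c$; the two pieces $\eta f$ and $(1-\eta)f$ individually violate \eqref{2.26B} and only their sum satisfies it. So bounding them separately cannot succeed.

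The repair is immediate: either reuse your own localization argument from \eqref{2.26A} with $g=\chi\,\partial f$ (the same bookkeeping gives $\lambda^2|\partial f(x_0)|^2\lesssim(\text{RHS of \eqref{2.26B}})^2$), or follow the paper and apply the Fundamental Theorem of Calculus to $W(x)^2|\partial f(x)|^2$, where the smoothness of $W$ eliminates the origin issue altogether.
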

\begin{proof}
The case $||x_0|-t|\le 2$ follows from the inequality
$|f(x_0)|^2 \le \int|\partial_1\partial_2(f(x)^2) | dx_1 dx_2$. For $||x_0|-t|>2$,
we note that $\langle x_0\rangle +t \sim |x_0|+t\gtrsim 1$ and
$\langle |x_0|-t \rangle \sim \frac{ \langle |x_0|^2-t^2 \rangle}{\langle x_0
\rangle + t}=:W(x_0)$.   Observe that
\begin{align}
\sum_{1\le i\le 2}\|\partial_i  W\|_{\infty}+ \sum_{1\le i,j\le 2}\|\partial_i \partial_j W\|_{\infty} \lesssim 1, \quad W(x) \lesssim \langle |x|-t\rangle, \;\forall\, x \in \mathbb R^2.
\end{align}
 By using the Fundamental Theorem of Calculus we have 
\begin{align}
 \langle |x_0|-t \rangle | f(x_0)|^2 &\lesssim  W(x_0) |f(x_0)|^2 \lesssim \int_{\mathbb R^2}
\left|\partial_1\partial_2 \Bigl ( W(x) f(x)^2
\Bigr) \right | dx_1 dx_2 \notag \\
& \lesssim \| f\|_2^2 + \| \nabla f \|_2 \| f\|_2 + \|
 \langle |x|-t\rangle \nabla f \|_2 \| \nabla f\|_2  + \| \langle |x| -t\rangle \partial_1\partial_2 f \|_2
 \| f\|_2\notag \\
 & \lesssim \|f\|_2^2 + \| \langle |x|-t\rangle \nabla f \|_2^2+ \|
 \langle |x|-t\rangle \partial_1 \partial_2 f \|_2^2.
\end{align}
Thus \eqref{2.26A} follows. The proof of \eqref{2.26B} is similar by working with the expression $W(x_0)^2 |\partial f(x_0)|^2$ for the case $|x_0-t|>2$.
 For \eqref{2.27A} we may assume $t\ge 2$. The
case $|x_0|\le t/2$ follows from \eqref{2.26A}. The case $|x_0|>t/2$ follows from
Lemma \ref{lem:S}. 
\end{proof}

\begin{lem} \label{lem2.3a}
Suppose $\tilde u= \tilde u(t,x)$ has continuous second order derivatives. Then
\begin{align} 
&| \langle r -t \rangle \partial_{tt} \tilde u (t,x) | 
+| \langle r -t \rangle \partial_t \nabla  \tilde u (t,x) | 
+| \langle r -t \rangle \Delta \tilde u (t,x) |  \notag \\
\lesssim &
| (\partial \Gamma^{\le 1} \tilde u)(t,x)| + (r+t) | (\square \tilde u)(t,x) |, \quad r=|x|, \, 
t\ge 0;  \label{2.9a0}
\end{align}
and 
\begin{align}
&| \langle r -t \rangle  \partial^2 \tilde u (t,x) | 
\lesssim 
| (\partial \Gamma^{\le 1} \tilde u)(t,x)| + (r+t) | (\square \tilde u)(t,x) |, \quad 
\forall\, r\ge t/10,
 \, t\ge 1. \label{2.9a1}
\end{align}
Suppose $T_0\ge 2$ and $u \in C^{\infty}([2,T_0]\times \mathbb R^2)$ solves  \eqref{eq:we2d} with support in $|x|\le t+1$, $2\le t\le T_0$.
For any integer $l_0\ge 2$, there exists $\epsilon_1>0$ depending only on
$l_0$, such that if at some $2\le t\le T_0$,
\begin{align} \label{2.9a3}
\| (\partial \Gamma^{\le \lceil{\frac {l_0}2}\rceil +2} u)(t,\cdot) \|_{L_x^2(\mathbb R^2)} \le \epsilon_1,
\qquad  \text{( here $\lceil{z}\rceil = \min \{n\in \mathbb N:\, n\ge z\}$ )}
\end{align}
then for the same $t$, we have the $L^2$ estimate:
\begin{align} \label{2.9a4}
\| (\langle r -t \rangle \partial^2 \Gamma^{\le l_0} u) (t,\cdot) \|_{L_x^2(\mathbb R^2)}\lesssim \| (\partial \Gamma^{\le l_0+1} u )(t,\cdot) \|_{L_x^2(\mathbb R^2)}.
\end{align}
For any integer $l_1\ge 2$, there exists $\epsilon_2>0$ depending only on
$l_1$, such that if at some $2\le t\le T_0$,
\begin{align} \label{2.99a1}
\| (\partial \Gamma^{\le l_1 +1} u)(t,\cdot) \|_{L_x^2(\mathbb R^2)} \le \epsilon_2,
\end{align}
then for the same $t$, we have the point-wise estimate:
\begin{align} \label{2.99a2}
 |(\langle r -t  \rangle \partial^2 \Gamma^{\le l_1} u )(t,x) |
 \lesssim |  (\partial \Gamma^{\le l_1+1} u )(t,x)|, 
 \qquad\forall\, r\ge t/10.
 \end{align}
 Moreover, we have
 \begin{align} \label{2.99aB2}
 \|  \partial \Delta \Gamma^{\le l_1-1} u \|_{L_x^2(|x|\le \frac 23 t)} 
 \lesssim t^{-2}\| (\partial \Gamma^{\le l_1 +1} u)(t,\cdot) \|_{L_x^2(\mathbb R^2)}.
 \end{align}
 \end{lem}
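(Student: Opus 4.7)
The plan is to prove the two pointwise commutator-type identities \eqref{2.9a0} and \eqref{2.9a1} first, and then apply them to $\Gamma^{\le l_0}u$ together with the null-form decomposition of the nonlinearity from Lemma \ref{Lem2.3} to derive the $L^2$ and pointwise estimates under the smallness hypotheses.

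For \eqref{2.9a1} in the region $r\ge t/10$, $t\ge1$, I would start from $2L_0=(t+r)\partial_++(t-r)\partial_-$ and apply $\partial_\pm$. Using $\partial_+(t+r)=2$, $\partial_+(t-r)=0$, $\partial_-(t-r)=2$, $\partial_-(t+r)=0$ I obtain
\begin{align*}
(t-r)\partial_-^2 \tilde u &= 2\partial_- L_0 \tilde u - 2\partial_- \tilde u - (t+r)\partial_+\partial_-\tilde u,\\
(t+r)\partial_+^2 \tilde u &= 2\partial_+ L_0 \tilde u - 2\partial_+ \tilde u - (t-r)\partial_+\partial_-\tilde u.
\end{align*}
Plugging in the decomposition $\partial_+\partial_-=\square+r^{-1}\partial_r+r^{-2}\partial_{\theta\theta}$ together with the algebraic identity $\partial_{\theta\theta}\tilde u=x_1\partial_2\partial_\theta\tilde u-x_2\partial_1\partial_\theta\tilde u$, which yields $|\partial_{\theta\theta}\tilde u|\lesssim r|\partial\partial_\theta\tilde u|$, controls the RHS by $|\partial\Gamma^{\le1}\tilde u|+(r+t)|\square\tilde u|$ on $r\sim t\gtrsim1$. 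Since each second derivative $\partial^2\tilde u$ can be written as a combination of $\partial_+^2,\partial_+\partial_-,\partial_-^2$ and $r^{-1}\partial_\theta$ with coefficients bounded in $r\ge t/10$, \eqref{2.9a1} follows. For the general-$r$ inequality \eqref{2.9a0} I use the simpler commutator $\partial_\mu L_0=L_0\partial_\mu+\partial_\mu$, rearranged as $t\partial_t\partial_\mu\tilde u=\partial_\mu L_0\tilde u-\partial_\mu\tilde u-r\partial_r\partial_\mu\tilde u$, combined with $\Delta=\partial_{tt}-\square$. The LHS of \eqref{2.9a0} contains only the "good" combinations $\partial_{tt},\partial_t\nabla,\Delta$, and the error term $r\partial_r\partial_\mu\tilde u$ is bounded by $r$ times quantities already appearing on the LHS; for $r\le t/10$ this is absorbed via $r/t\le1/10$, while for $r\ge t/10$ one invokes \eqref{2.9a1}.

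To prove \eqref{2.9a4}, apply \eqref{2.9a0} with $\tilde u=\Gamma^{\le l_0}u$ (the commutator $[\partial,\Gamma]$ costs only lower-order $\Gamma$ terms) to obtain
\begin{equation*}
\|\langle r-t\rangle\partial^2\Gamma^{\le l_0}u\|_{L^2}\lesssim\|\partial\Gamma^{\le l_0+1}u\|_{L^2}+\|(r+t)\square\Gamma^{\le l_0}u\|_{L^2}.
\end{equation*}
By Lemma \ref{Lem2.3} $\square\Gamma^{\le l_0}u$ is a sum of null-form products $g^{kij}_{\alpha;\alpha_1,\alpha_2}\partial_k\Gamma^{\alpha_1}u\partial_{ij}\Gamma^{\alpha_2}u$ where each coefficient still satisfies the null condition, and \eqref{a2.12b} gives
\begin{equation*}
(r+t)|g^{kij}\partial_k f\partial_{ij}h|\lesssim|\Gamma f||\partial^2 h|+|\partial f||\Gamma\partial h|+|\partial f||\partial^2 h|\langle r-t\rangle.
\end{equation*}
Splitting the sum by which of $|\alpha_1|,|\alpha_2|$ is $\le\lceil l_0/2\rceil$, I place the low-order factor in $L^\infty$ via Sobolev embedding (Lemma \ref{lem2.5A} and the smallness \eqref{2.9a3}) and the high-order factor in $L^2$. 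The dangerous self-referential term $\|\partial\Gamma^{\le\lceil l_0/2\rceil}u\|_\infty\cdot\|\langle r-t\rangle\partial^2\Gamma^{\le l_0}u\|_{L^2}$ carries an $\epsilon_1$ prefactor, so it is absorbed into the LHS once $\epsilon_1$ is small. The pointwise estimate \eqref{2.99a2} is proven analogously using \eqref{2.9a1} and pointwise Sobolev bounds in the region $r\ge t/10$ under the smallness \eqref{2.99a1}.

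For the interior $L^2$ estimate \eqref{2.99aB2}, I use $\Delta=\partial_{tt}-\square$ to write $\partial\Delta\Gamma^{\le l_1-1}u=\partial_{tt}\partial\Gamma^{\le l_1-1}u-\partial\square\Gamma^{\le l_1-1}u$. On $|x|\le\tfrac23 t$ we have $\langle r-t\rangle\ge t/3$, so \eqref{2.9a0} applied to $\tilde v=\partial\Gamma^{\le l_1-1}u$ yields $|\partial_{tt}\tilde v|\lesssim t^{-1}|\partial\Gamma^{\le1}\tilde v|+|\square\tilde v|$, i.e.\ $|\partial_{tt}\partial\Gamma^{\le l_1-1}u|\lesssim t^{-1}|\partial^2\Gamma^{\le l_1}u|+|\square\partial\Gamma^{\le l_1-1}u|$ after commuting. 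A second application of \eqref{2.9a0} on the same region gives $|\partial^2\Gamma^{\le l_1}u|\lesssim t^{-1}|\partial\Gamma^{\le l_1+1}u|+|\square\Gamma^{\le l_1}u|$, and combining produces the $t^{-2}$ factor in \eqref{2.99aB2} upon taking $L^2$ norms; the residual $\square$ terms are dominated by the null-form plus smallness argument used for \eqref{2.9a4}. The main obstacle throughout is the bootstrap closure in \eqref{2.9a4}: the quantity $\|\langle r-t\rangle\partial^2\Gamma^{\le l_0}u\|_{L^2}$ we aim to bound reappears on the RHS through the last term of the null-form expansion, and the closure requires simultaneously exploiting the $\langle r+t\rangle^{-1}$ gain from the null structure and the sharpness of the smallness hypothesis \eqref{2.9a3} for the absorption to succeed.
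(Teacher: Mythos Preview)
Your overall strategy is right, but there is a recurring gap: you repeatedly treat \eqref{2.9a0} as if it controlled the full Hessian $\partial^2\tilde u$, when in fact it only controls the ``good'' combinations $\partial_{tt}\tilde u$, $\partial_t\nabla\tilde u$, and $\Delta\tilde u$. This matters in three places.

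First, in your proof of \eqref{2.9a0} itself, the absorption argument fails for spatial $\mu$: the error term $r\partial_r\partial_i\tilde u$ ($i=1,2$) is a pure spatial second derivative and is \emph{not} among the quantities on the LHS, so it cannot be absorbed by taking $r/t$ small. The paper instead derives the two scalar equations $t\partial_{tt}\tilde u+r\partial_t\partial_r\tilde u=O(Y)$ and $t\partial_t\partial_r\tilde u+r\partial_{rr}\tilde u=O(Y)$ from $\partial_tL_0\tilde u$ and $\partial_rL_0\tilde u$, then uses the wave operator identity $r\partial_{tt}\tilde u-r\partial_{rr}\tilde u=O(Y)$ to solve the resulting linear system for $(t-r)\partial_{tt}\tilde u$ and $(t-r)\partial_t\partial_r\tilde u$ directly; no absorption is needed.

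Second, in deriving \eqref{2.9a4} you write $\|\langle r-t\rangle\partial^2\Gamma^{\le l_0}u\|_2\lesssim\|\partial\Gamma^{\le l_0+1}u\|_2+\|(r+t)\square\Gamma^{\le l_0}u\|_2$ as a direct consequence of \eqref{2.9a0}. This is not true: \eqref{2.9a0} gives at most $\|\langle r-t\rangle\Delta\Gamma^{\le l_0}u\|_2$ on the left, and passing to the full $\partial_i\partial_j$ requires the separate $L^2$ integration-by-parts identity
\[
\sum_{i,j=1}^2\|\langle r-t\rangle\partial_i\partial_j v\|_2\lesssim\|\partial v\|_2+\|\langle r-t\rangle\Delta v\|_2,
\]
which is what the paper uses. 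Moreover, the paper's closure of \eqref{2.9a4} is a two-step bootstrap: one first proves the estimate at the intermediate level $k_0=\lceil l_0/2\rceil+1$ (where the smallness hypothesis \eqref{2.9a3} directly gives $\|\partial\Gamma^{\le k_0+1}u\|_2\ll1$), and only then uses the resulting weighted $L^\infty$ bound $\|\langle r-t\rangle\partial^2\Gamma^{\le l+2}u\|_\infty\ll1$ (via Lemma~\ref{lem2.5A}) to close at $k_0=l_0$. Your single-step absorption hides this.

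Third, the same conflation appears in your argument for \eqref{2.99aB2}: your ``second application of \eqref{2.9a0}'' claims a pointwise bound on all of $\partial^2\Gamma^{\le l_1}u$, which \eqref{2.9a0} does not give. Here one should instead invoke the already-proven $L^2$ estimate \eqref{2.9a4} on the region $|x|\le\tfrac23 t$ (where $\langle r-t\rangle\sim t$). The paper also needs an additional trick to handle the $|\partial\Gamma^{\le l_1-1}u|\,|\partial^3\Gamma^{\le l_1-1}u|$ term arising from $\partial\square$: one writes $\partial^3=\partial\square+\partial\tilde\partial^2$ modulo lower order (with $\tilde\partial=(\partial_1,\partial_2)$), absorbs the $\partial\square$ piece via smallness of the prefactor, and controls $\partial\tilde\partial^2$ via elliptic regularity from the $\Delta$ bound just obtained.
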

\begin{proof}
In the 3D case, the estimate \eqref{2.9a0} is an elementary but deep observation of Sideris (cf. \cite{SK96}).  To prove the 2D case we denote 
$Y= |(\partial \Gamma^{\le 1} \tilde u)(t,x)| + (r+t) | (\square \tilde u)(t,x) |$. By using
$L_0 \tilde u =t \partial_t \tilde u +r \partial_r \tilde u$, we obtain
\begin{align}
& \partial_t L_0 \tilde u = \partial_t \tilde u
+t \partial_{tt} \tilde u + r \partial_t \partial_r \tilde u
\; \Rightarrow \; | r \partial_t \partial_r \tilde u + t \partial_{tt} \tilde u| \lesssim Y; \\
& \partial_r L_0 \tilde u = t \partial_t \partial_r \tilde u
+ \partial_r \tilde u +r \partial_{rr} \tilde u
\; \Rightarrow \; | t \partial_t \partial_r \tilde u + r \partial_{rr} \tilde u| \lesssim Y.
\end{align}
Since $\square \tilde u = \partial_{tt} \tilde u -\partial_{rr} \tilde u -
\frac 1 r \partial_r \tilde u - \frac 1 {r^2}
\partial_{\theta\theta} \tilde u$ and $|\frac 1 r \partial_{\theta\theta} \tilde u|
\lesssim | \nabla \partial_{\theta} \tilde u| \lesssim Y$, we have
\begin{align}
|r \partial_{tt} \tilde u - r \partial_{rr} \tilde u| \lesssim Y.
\end{align}
It follows that 
\begin{align}
\langle r -t\rangle ( |\partial_{tt} \tilde u| + |\partial_t \partial_r \tilde u|) \lesssim Y.
\end{align}
By using $\partial_{\theta} L_0 \tilde u
=t \partial_t \partial_{\theta} \tilde u + r \partial_r \partial_{\theta}
\tilde u=(t-r) \partial_t \partial_{\theta} \tilde u + r (\partial_t +\partial_r )\partial_{\theta}
\tilde u$,  we obtain
\begin{align}
|(t-r) \partial_t ( \frac 1 r \partial_{\theta} \tilde u) | \lesssim Y.
\end{align}
The estimates of $\partial_t \partial_r \tilde u$ and 
$\partial_t (\frac 1 r \partial_{\theta} \tilde u)$ settle the point-wise estimate of
$\partial_t \nabla \tilde u$. It follows that
\begin{align}
\langle r -t\rangle ( |\partial_{tt} \tilde u| + |\partial_t \nabla \tilde u|+|\Delta \tilde u|)
\lesssim Y
\end{align}
which is exactly \eqref{2.9a0}.  To derive the estimate \eqref{2.9a1} 
we only need to bound $|(r-t) \partial_i \partial_j \tilde u|$ for $1\le i, j\le 2$. 
By using the identity $\nabla = \omega \partial_r + 
\frac {\omega^{\perp} } r \partial_{\theta}$ where $\omega=(\cos \theta,
\sin \theta)$,
$ \omega^{\perp}=(-\sin \theta, \cos \theta)$, it is not difficult to check that
for $r\gtrsim t$, 
\begin{align}
\sum_{1\le i,j \le 2}
|(r-t) \partial_i \partial_j \tilde u|
&\lesssim |(r-t) \partial_{rr} \tilde u| + Y \notag \\
& \lesssim | (r-t) (\Delta \tilde u - \frac 1 r \partial_r \tilde u -\frac 1 {r^2} \partial_{\theta\theta}
\tilde u) | + Y  \lesssim Y.
\end{align}
Thus \eqref{2.9a1} easily follows.

 For \eqref{2.9a4}, by using a simple integration-by-parts argument, one has (below 
 $k_0\ge 0$ is a  running parameter)
 \begin{align} \label{2.21A}
\sum_{i,j=1}^2 \| \langle r -t\rangle \partial_i \partial_j \Gamma^{\le k_0} u \|_2
  \lesssim \| \partial \Gamma^{\le k_0} u \|_2 + \| \langle r -t\rangle \Delta 
  \Gamma^{\le k_0} u\|_2.
  \end{align}
 By using \eqref{2.9a0} and
\eqref{a2.12b} we have
\begin{align}
 & |(\langle r -t \rangle \partial_{tt} \Gamma^{\le k_0} u)(t,x)|  +
 |(\langle r -t \rangle \partial_t \nabla  \Gamma^{\le k_0} u)(t,x)|  +|(\langle r -t \rangle \Delta \Gamma^{\le k_0} u)(t,x)|   \notag \\
\lesssim  &
| (\partial \Gamma^{\le k_0+1}  u)(t,x)| 
+ \sum_{m+l\le k_0}
(|\Gamma^{\le m+1} u|
|\partial^2 \Gamma^{\le l} u |
+|\partial \Gamma^{\le m} u| |\Gamma^{\le l+1} \partial u|+
|\partial \Gamma^{\le m} u | |\partial^2 \Gamma^{\le l} u| |r-t| ). \label{2.23P}
\end{align}
By \eqref{2.21A}, we obtain
\begin{align}
&\| \langle r-t\rangle \partial^2 \Gamma^{\le k_0} u \|_2 
\lesssim  
\| \partial \Gamma^{\le k_0+1}  u\|_2  \notag \\
&\qquad+ \sum_{m+l\le k_0}
(\| |\Gamma^{\le m+1} u|
|\partial^2 \Gamma^{\le l} u | \|_2
+\| |\partial \Gamma^{\le m} u| |\Gamma^{\le l+1} \partial u| \|_2+
\| |\partial \Gamma^{\le m} u | |\partial^2 \Gamma^{\le l} u| |r-t| \|_2).
\end{align}
If $m\le l+1$, then we use  the estimates 
(note that $m+2 \le \lfloor
\frac{k_0+1}2 \rfloor +2\le \lceil \frac {k_0}2\rceil+2$)
\begin{align}
&\langle r -t\rangle^{-1}| (\Gamma^{\le m+1} u)(t,x)| \lesssim
\| \partial \Gamma^{\le m+2} u \|_2, 
\quad \| \partial \Gamma^{\le m} u\|_{\infty}
\lesssim \| \partial \Gamma^{\le m+2} u\|_2.
\end{align}
If $m\ge l+2 $, then $l\le \frac{k_0-2}2$ and we use the estimates 
(see \eqref{2.26B} for the second estimate)
\begin{align} \label{2.25A}
\| \frac{|\Gamma^{\le m+1} u|}{\langle r-t\rangle} \|_2 \lesssim
\| \partial \Gamma^{\le m+1} u\|_2, \quad
| \langle r -t \rangle \partial^2 \Gamma^{\le l} u(t,x)|
\lesssim \|\langle r -t\rangle \partial^2 \Gamma^{\le l+2} u\|_2.
\end{align}
Thus if $\| \partial\Gamma^{\le k_0+1} u \|_2 \ll 1$, we obtain 
\begin{align}
\| \langle r-t \rangle \partial^2 \Gamma^{\le k_0} u(t,\cdot) \|_2
\lesssim \| \partial \Gamma^{\le k_0+1} u\|_2 \ll 1.
\end{align}
To prove \eqref{2.9a4} under the assumption \eqref{2.9a3} we first take $k_0=
\lceil{\frac {l_0}2}\rceil+1 $ and show that 
\begin{align}
\|\langle r-t\rangle \partial^2 \Gamma^{\le \lceil{\frac {l_0}2}\rceil +1} u)(t,\cdot) \|_2 
\lesssim\| (\partial \Gamma^{\le \lceil{\frac {l_0}2}\rceil +2} u)(t,\cdot) \|_2 \ll 1.
\end{align}
We then use this smallness in \eqref{2.25A} and obtain the desired result for
$k_0= l_0$ (Note that $\lceil \frac{l_0-2} 2 \rceil +2 \le \lceil \frac{l_0} 2 \rceil+1$). 
The estimate of \eqref{2.99a2} follows from  \eqref{2.9a1}.

We  turn now to \eqref{2.99aB2}.  Applying \eqref{2.9a0} to $\tilde u
= \partial \Gamma^{\le l_1-1} u$ with $r\le \frac 23 t$, we get
\begin{align}
| \Delta \partial \Gamma^{\le l_1-1} u |
\lesssim \frac 1 t |\partial^2 \Gamma^{\le l_1 } u|
+ | \partial \square \Gamma^{\le l_1-1} u|.
\end{align}
By Lemma \ref{Lem2.3}, we have
\begin{align}
| \partial \square \Gamma^{\le l_1-1} u|
& \lesssim \sum_{a+b\le l_1-1} | \partial( \partial \Gamma^a u \partial^2 \Gamma^b u) |
\notag \\
& \lesssim 
|\partial^2 \Gamma^{\le l_1-1} u| | \partial^2 \Gamma^{\le l_1-1} u| +|\partial \Gamma^{\le l_1-1}u|
|\partial^3 \Gamma^{\le l_1-1} u|. \label{A2.21_0}
\end{align}
Note that 
\begin{align}
|\partial^3 \Gamma^{\le l_1-1} u|
&\lesssim | \underbrace{\partial_{tt} \partial \Gamma^{\le l_1-1} u }_{\text{$\partial_t$ appears
twice or more}} |+ \sum_{1\le i_1,i_2\le 2} |\underbrace{
\partial \partial_{i_1} \partial_{i_2} \Gamma^{\le l_1-1} u}_{\text{$\partial_t$
appears at most once}}|  \notag \\
& \lesssim | \partial \square \Gamma^{\le l_1-1} u|
+ |\partial \tilde \partial^2 \Gamma^{\le l_1-1} u |, \label{A2.21_1}
\end{align}
where we have denoted $\tilde \partial = (\partial_1, \partial_2)$.  By using the smallness
of the pre-factor $\|\partial \Gamma^{\le l_1-1} u\|_{\infty}$ and
\eqref{A2.21_1},  we then derive from \eqref{A2.21_0}
\begin{align}
|\partial \square \Gamma^{\le l_1-1} u |\lesssim 
|\partial^2 \Gamma^{\le l_1-1} u| | \partial^2 \Gamma^{\le l_1-1} u| +|\partial \Gamma^{\le l_1-1}u|
|\partial \tilde \partial^2 \Gamma^{\le l_1-1} u|. \label{A2.21_2}
\end{align}

By the standard Sobolev embedding $H^1(\mathbb R^2) \hookrightarrow L^4(\mathbb R^2)$, 
we have
\begin{align}
&\| \langle r-t\rangle \partial^2 \Gamma^{\le l_1-1} u
\|_4  \lesssim \| \partial^{\le 1} ( \langle r -t \rangle  \partial^2 \Gamma^{\le l_1-1} u)
\|_2 \lesssim \| (\partial \Gamma^{\le l_1 +1} u)(t,\cdot) \|_2;  \\
& \| \langle r -t \rangle^{\frac 12}  \partial \Gamma^{\le l_1-1} u\|_{\infty}
 \lesssim \| (\partial \Gamma^{\le l_1 +1} u)(t,\cdot) \|_2,
\qquad (\text{by Lemma \ref{lem2.5A}}). 
\end{align}
By using a smooth cut-off function localized to $|x| \le \frac 23 t$, we then derive
\begin{align}
\| \Delta \partial \Gamma^{\le l_1-1} u\|_{L_x^2(|x|\le \frac 23 t)} \lesssim t^{-\frac 32} \| (\partial \Gamma^{\le l_1 +1} u)(t,\cdot) \|_2.
\end{align}
It follows that (recall $\tilde \partial=(\partial_1, \partial_2)$)
\begin{align}
\| \tilde \partial^2 \partial \Gamma^{\le l_1-1} u\|_{L_x^2(|x|\le \frac 23 t)} \lesssim t^{-\frac 32}\| (\partial \Gamma^{\le l_1 +1} u)(t,\cdot) \|_2.
\end{align}
Plugging this estimate into \eqref{A2.21_2},  we 
obtain the estimate \eqref{2.99aB2}.
\end{proof}

\begin{lem}[Decay estimates] \label{lem2.6}
Suppose $T_0\ge 2$ and $u \in C^{\infty}([2,T_0]\times \mathbb R^2)$ solves  \eqref{eq:we2d} with support in $|x|\le t+1$, $2\le t\le T_0$. Suppose $I\ge 4$ is an integer  and 
\begin{align}
E_I(u(t,\cdot) ) = \| (\partial \Gamma^{\le I} u )(t,\cdot) \|_2^2 \le \tilde \epsilon,
\end{align}
where $\tilde \epsilon>0$ is sufficiently small. Then we have the following decay estimates:
\begin{align}
& t^{\frac 12} \| \partial \Gamma^{\le I-2} u \|_{L_x^{\infty}} 
+t^{\frac 12}\| \langle |x|-t\rangle \partial^2 \Gamma^{\le I-3} u \|_{L_x^{\infty}(|x|>\frac{t}{10})} 
+\| \langle |x|-t\rangle \partial^2 \Gamma^{\le I-3} u \|_{L_x^{\infty}}
\lesssim \;E_I^{\frac 12}; \label{2.30A}\\
& \| \partial^2\Gamma^{\le I-3} u \|_{L_x^{\infty}(|x|<\frac t2)}
\lesssim t^{-\frac 32} E_I^{\frac 12}; \qquad \label{2.30AA}\\
& 
\| \langle |x|-t\rangle \partial^2 \Gamma^{\le I-3} u \|_{L_x^{\infty}}
\lesssim t^{-\frac 12} E_I^{\frac 12} ;
\qquad \label{2.30AB}\\
&\| \frac{T \Gamma^{\le I-2} u} {\langle |x|-t \rangle} \|_{L_x^{\infty}}
+ \| T \partial \Gamma^{\le I-3} u \|_{L_x^{\infty}}
\lesssim t^{-\frac 32} E_I^{\frac 12};  \label{2.30B} \\
& \| \frac{ T \Gamma^{\le I-1} u } { \langle |x| -t \rangle }
\|_{L_x^2}  +\| T \partial \Gamma^{\le I-1} u \|_{L_x^2} \lesssim
t^{-1} E_I^{\frac 12}. \label{2.30C}  %\\
\end{align}
More generally, for any integer $I_1\ge 1$, we have
\begin{align} \label{2.30D}
\| \frac{T\Gamma^{\le I_1} u } {\langle |x| -t \rangle }
\|_{L_x^2} \lesssim t^{-1} \| \partial \Gamma^{\le I_1+1} u \|_{L_x^2}.
\end{align}
Also we have
\begin{align}
& \| \langle |x|-t\rangle^2 \partial^3 u \|_{L_x^{\infty}(|x|\ge \frac t2)}
\lesssim t^{-\frac 12}. \label{2.30DD}
\end{align}
\end{lem}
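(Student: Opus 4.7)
The plan is to derive these decay estimates as successive corollaries of Lemmas \ref{lem:S}--\ref{lem2.3a}, exploiting the null structure at each step. Throughout, the smallness of $\tilde\epsilon$ makes every invocation of \eqref{2.9a4} and \eqref{2.99a2} legitimate. First I dispose of \eqref{2.30A}: the opening piece $t^{1/2}\|\partial\Gamma^{\le I-2}u\|_\infty\lesssim E_I^{1/2}$ is obtained by applying \eqref{2.27A} to $f=\partial\Gamma^{\le I-2}u$, then using \eqref{2.9a4} (with $l_0=I-1$) to bound $\|\langle|x|-t\rangle\partial^2\Gamma^{\le I-1}u\|_2\lesssim E_I^{1/2}$. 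The second piece is immediate from the pointwise \eqref{2.99a2} with $l_1=I-3$ combined with the first piece. The third (global) piece follows by applying \eqref{2.26B} to $f=\partial\Gamma^{\le I-3}u$; the three weighted $L^2$ terms on the RHS are each $\lesssim E_I^{1/2}$ by three applications of \eqref{2.9a4}.

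For the $L^2$-bounds \eqref{2.30C} and \eqref{2.30D} I would use the decomposition $T_i=\omega_i\partial_+\pm(\omega_j/r)\partial_\theta$ together with $\partial_+=(t+r)^{-1}(2L_0-(t-r)\partial_-)$ to obtain, on the support $\{|x|\le t+1\}$,
\begin{align*}
|Tf|\lesssim \frac{|\Gamma f|}{\langle t+r\rangle}+\frac{|r-t|}{\langle t+r\rangle}|\partial f|,
\end{align*}
supplemented, in the inner region $r\ll t$ (where $\langle r-t\rangle\sim t$), by the trivial $|Tf|/\langle r-t\rangle\lesssim|\partial f|/t$. Specializing $f=\Gamma^{\le I-1}u$ (resp.\ $f=\partial\Gamma^{\le I-1}u$), dividing by $\langle r-t\rangle$ where required, and taking $L^2$, the refined Hardy inequality (Lemma \ref{lem:Hardy}) gives $\|\Gamma^{\le I}u/\langle r-t\rangle\|_2\lesssim E_I^{1/2}$, while \eqref{2.9a4} handles $\|\langle r-t\rangle\partial^2\Gamma^{\le I-1}u\|_2$. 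The generalization \eqref{2.30D} is identical with $I-1$ replaced by $I_1$.

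For the pointwise bounds \eqref{2.30B} I would apply Lemma \ref{lem:S} to $T\Gamma^{\le I-2}u/\langle r-t\rangle$ and to $T\partial\Gamma^{\le I-3}u$, inserting the $L^2$ bounds from the previous paragraph; derivatives of the weight $\langle r-t\rangle^{-1}$ only generate commutators absorbed by those same bounds. The delicate step is \eqref{2.30AA}: in the interior $\{r<t/2\}$ I would use the algebraic identity $\partial_i\partial_j=T_iT_j-\omega_iT_j\partial_t-T_i\omega_j\partial_t+\omega_i\omega_j\partial_{tt}$, so that (together with $\partial_t\partial_j$ and $\partial_{tt}$) every component of $\partial^2\Gamma^{\le I-3}u$ reduces to quantities of the form $T^2\Gamma^{\le I-3}u$, $T\partial\Gamma^{\le I-3}u$, or $\partial_{tt}\Gamma^{\le I-3}u$. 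The $T^2$ and $T\partial$ pieces are both $\lesssim t^{-3/2}E_I^{1/2}$ via the decomposition of the previous paragraph and the pointwise bound \eqref{2.30B}, while the $\partial_{tt}$ piece is handled by \eqref{2.9a0} applied to $\tilde u=\Gamma^{\le I-3}u$, which gives $|\partial_{tt}\Gamma^{\le I-3}u|\lesssim t^{-1}|\partial\Gamma^{\le I-2}u|+|\square\Gamma^{\le I-3}u|$; the first term contributes $t^{-3/2}E_I^{1/2}$ by \eqref{2.30A}, and the nonlinear $\square$-piece is bounded via the null form \eqref{a2.12a} (applied to each summand of \eqref{a2.12aa}) together with \eqref{2.30A} and \eqref{2.30B}, yielding $t^{-3/2}E_I\lesssim t^{-3/2}E_I^{1/2}$ by smallness. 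Then \eqref{2.30AB} follows by splitting at $|x|=t/10$ and combining the outer bound from \eqref{2.30A} with \eqref{2.30AA} multiplied by $\langle r-t\rangle\lesssim t$ on the inner piece; finally \eqref{2.30DD} is derived by applying \eqref{2.9a1} to $\tilde u=\partial u$ and bounding $\square\partial u$ via the null form and the previously-established pointwise decay. The chief obstacle is \eqref{2.30AA}: the target $t^{-3/2}$-rate in the interior is not visible from energy estimates alone and requires both the $\partial_i\partial_j$-decomposition and the null-form structure of $\square\Gamma^{\le I-3}u$ to extract the needed improvement in every factor.
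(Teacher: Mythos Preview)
Your arguments for \eqref{2.30A}, \eqref{2.30C}, \eqref{2.30D}, and \eqref{2.30DD} are essentially correct and follow the same route as the paper. The gap is in \eqref{2.30AA} (and consequently in the interior part of \eqref{2.30B}, which is entangled with it).

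\textbf{The circularity.} For $\|T\partial\Gamma^{\le I-3}u\|_{L^\infty}$ in the region $|x|<t/2$, Lemma~\ref{lem:S} only yields an $|x|^{1/2}$ weight, not a $t^{1/2}$ weight, so it gives nothing there; the only available bound is the trivial $|T\partial f|\lesssim|\partial^2 f|$, which requires \eqref{2.30AA}. But your proof of \eqref{2.30AA} invokes \eqref{2.30B} to control the $T\partial$ and $T^2$ pieces. Neither estimate can be established first.

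\textbf{Why the decomposition fails in the interior.} More fundamentally, your identity $\partial_i\partial_j = T_iT_j - \omega_iT_j\partial_t - \omega_jT_i\partial_t + \omega_i\omega_j\partial_{tt}+\text{(l.o.t.)}$ is tautological for the pure spatial part: expanding $T_iT_j=(\omega_i\partial_t+\partial_i)(\omega_j\partial_t+\partial_j)$ reproduces $\partial_i\partial_j$ on the right. The good operators $T_i$ give no improvement over $\partial_i$ when $|x|<t/2$, so no amount of shuffling between $T$ and $\partial$ can extract $t^{-3/2}$ there. Likewise, the null form \eqref{a2.12a} gives no gain for $|\square\Gamma^{\le I-3}u|$ in the interior: you only get $|Tf||\partial^2h|+|\partial f||T\partial h|\lesssim|\partial f||\partial^2 h|\lesssim t^{-1}E_I$, not $t^{-3/2}E_I$.

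\textbf{What the paper does instead.} The paper proves \eqref{2.30AA} \emph{before} \eqref{2.30B}, by an entirely different mechanism: cut off with $\psi(x/t)$ supported in $|x|\le 0.52t$, then apply the 2D Gagliardo--Nirenberg interpolation
\[
\|\psi(\tfrac{\cdot}{t})\partial^2\Gamma^{\le I-3}u\|_\infty
\lesssim \|\psi(\tfrac{\cdot}{t})\partial^2\Gamma^{\le I-3}u\|_2^{1/2}\,
\|\Delta(\psi(\tfrac{\cdot}{t})\partial^2\Gamma^{\le I-3}u)\|_2^{1/2}.
\]
The first factor is $\lesssim t^{-1}E_I^{1/2}$ from \eqref{2.9a4}, and the second is $\lesssim t^{-2}E_I^{1/2}$ from the $L^2$ interior estimate \eqref{2.99aB2} of Lemma~\ref{lem2.3a} (which you do not invoke at all). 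This gives $t^{-3/2}$ directly, with no reference to $T$ or the null structure. Only then is \eqref{2.30B} established, using \eqref{2.30AA} for $|x|\le t/2$ and your $T_i=\omega_i\partial_+\mp(\omega_j/r)\partial_\theta$ decomposition for $|x|>t/2$.
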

\begin{proof}
We shall take $\tilde \epsilon$ sufficiently small so that Lemma
\ref{lem2.3a} can be applied.   The estimate \eqref{2.30A} follows from Lemma \ref{lem2.5A} and Lemma \ref{lem2.3a}.
To derive the estimate \eqref{2.30AA}, we choose $\psi \in C_c^{\infty}(\mathbb R^2)$
such that $\psi(z) \equiv 1$ for $|z|\le 0.5$ and $\psi(z) \equiv 0$ for $|z|\ge 0.52$. Applying
the interpolation inequality $\|\tilde v\|_{\infty} \lesssim \|\tilde v \|_2^{\frac 12}
 \|\Delta \tilde v \|_2^{\frac 12}$ with $\tilde v(x) =\psi(\frac x t) \partial^2 
 \Gamma^{\le J-3} u$, we obtain
 \begin{align}
 \| \psi(\frac x t) \partial^2 \Gamma^{\le J-3} u 
 \|_{\infty} \lesssim \| \psi(\frac x t) \partial^2 \Gamma^{\le I-3} u 
 \|_2^{\frac 12}
 \| \Delta ( \psi(\frac x t) \partial^2 \Gamma^{\le I-3} u  ) \|_2^{\frac 12}.
 \end{align}
 By Lemma \ref{lem2.3a}, it is not difficult to check that
\begin{align}
 \| \Delta ( \psi(\frac x t) \partial^2 \Gamma^{\le I-3} u  ) \|_2 \lesssim t^{-2} E_I^{\frac 12},
 \quad  \| \psi(\frac x t) \partial^2 \Gamma^{\le I-3} u   \|_2  \lesssim t^{-1}
 E_I^{\frac 12}.
 \end{align}
 The estimate \eqref{2.30AA} then follows. For the estimate \eqref{2.30AB} we only
 need to examine the regime $|x| \ge t/2$. But this follows from Lemma \ref{lem2.3a} and
 \ref{lem2.5A}.

For \eqref{2.30B}, we note that the case $|x|\le \frac t2$ follows from \eqref{2.30A} and 
\eqref{2.30AA}.
On the other hand, for $|x|>\frac t2$ we denote $\tilde u=\Gamma^{\le I-2} u$ and estimate
$\| \frac {T_1 \tilde u} {\langle |x|-t \rangle} \|_{L_x^{\infty}(|x|>\frac t2)}$ (the
estimate for $T_2$ is similar). Recall that
\begin{align}
T_1 \tilde u & = \omega_1 \partial_t \tilde u + \partial_1 \tilde u
= \omega_1 (\partial_t + \partial_r ) \tilde u- \frac{\omega_2} r \partial_{\theta} \tilde u\notag \\
& = \omega_1 \frac 1 {t+r} ( 2 L_0 \tilde u - (t-r) \partial_-\tilde u) -\frac{\omega_2} r \partial_{\theta}
\tilde u.
\end{align}
Clearly for $r=|x|\ge \frac t2$,
\begin{align}
\left| \frac{ T_1 \tilde u } { \langle r -t \rangle}
\right| &\lesssim \frac1 t \Bigl( \left| \frac{L_0 \tilde u} {\langle r -t\rangle } \right| + |\partial \tilde u|
\Bigr)
+ \left| \frac{\partial_{\theta} \tilde u} {r \langle r-t \rangle } \right| \notag \\
& \lesssim t^{-1} \cdot t^{-\frac 12} \| \partial \Gamma^{\le 2} \tilde u\|_2
+ t^{-\frac 32}+t^{-1} \cdot t^{-\frac 12} \| \partial \Gamma^{\le 1} \partial_{\theta} \tilde u\|_2 \lesssim
t^{-\frac 32} E_I^{\frac 12},
\end{align}
where in the second last step we used Lemma \ref{lem:Hardy} (for the term $|\partial \tilde u|$
we use \eqref{2.30A}).  The estimates for \eqref{2.30C}--\eqref{2.30D} is similar. We omit the details.
 We now sketch how to
prove \eqref{2.30DD}.  By using \eqref{2.9a1} (applied to $\tilde u =\partial u$),
we obtain
\begin{align}
|\langle r -t\rangle \partial^3 u|
\lesssim | \partial^2 \Gamma^{\le 1} u |
+ (r+t) | \square \partial u |.
\end{align}
The contribution of the term $|\partial^2 \Gamma^{\le 1} u|$ is clearly OK for us since it can
absorb a factor of $\langle r -t \rangle$. By Lemma \ref{Lem2.3} (with $f=\partial u$,
$h=u$ or $f=u$, $h=\partial u$), we have
\begin{align}
(r+t) |\square \partial u |
&\lesssim
|\Gamma \partial u | |\partial^2 u | + |\partial^2 u |^2 |r-t|
+|\Gamma u| |\partial^3 u | + |\partial u | |\Gamma \partial^2 u|
+ |\partial u | |\partial^3 u | |r-t| \notag \\
& \lesssim |\partial \Gamma^{\le 1} u |
| \partial^2 \Gamma^{\le 1} u|+|\partial^2 u |^2 |r-t|
+|\frac{\Gamma u}{\langle r-t\rangle}| \cdot  \langle r-t\rangle |\partial^3 u |
+ |\partial u | |\partial^3 u | |r-t|.
\end{align}
The desired estimate  clearly follows by using smallness of the pre-factors.
\end{proof}

\section{Proof of Theorem \ref{thm:main} }
In this section and later sections, we carry out the proof of Theorem \ref{thm:main}. 
We fix a multi-index $\alpha$ with $|\alpha|\le m$ and for simplicity denote $v= \Gamma^{\alpha} u$. By Lemma \ref{Lem2.3} we have
(below for simplicity of notation we write $g^{kij}_{\alpha_1,\alpha_2}=
g^{kij}_{\alpha;\alpha_1,\alpha_2}$)
\begin{align} 
\square v  &= \sum_{\alpha_1+ \alpha_2 \le \alpha}
g^{kij}_{\alpha_1,\alpha_2} \partial_k \Gamma^{\alpha_1} u 
\partial_{ij} \Gamma^{\alpha_2} u \label{3.0A}\\
&= 
g^{kij}\partial_k v
\partial_{ij} u  + 
g^{kij} \partial_k  u 
\partial_{ij} v 
+\sum_{\substack{\alpha_1<\alpha, \alpha_2<\alpha;\\ \alpha_1+ \alpha_2 \le \alpha}}
g^{kij}_{\alpha_1,\alpha_2} \partial_k \Gamma^{\alpha_1} u 
\partial_{ij} \Gamma^{\alpha_2} u. 
\end{align}

Choose $p(t,r) = q(r-t)$, where
\begin{align}
q(s) = \int_0^s \langle \tau\rangle^{-1}  \bigl(\log ( 2+\tau^2) \bigr)^{-2} d\tau, \quad s\in \mathbb R.
\end{align}
Clearly 
\begin{align}
-\partial_t p = \partial_r p = q^{\prime}(r-t)
= \langle r -t \rangle^{-1} \bigl( \log (2+(r-t)^2)  \bigr)^{-2}.
\end{align}
Multiplying both sides of \eqref{3.0A} by $e^p \partial_t v$, we obtain
\begin{align*}
   \text{LHS}&=\int e^p \pa_{tt}v\pa_{t}v -\int e^p \Delta v\pa_{t}v
   =\int e^p \pa_{tt} v\pa_{t}v +\int e^p\na v\cdot\na\pa_{t}v+\int e^p\na v\cdot\na p\pa_{t}v \\
   &=\frac12\frac{d}{dt}\int e^p(\pa v)^2 -\frac12\int e^{p}|\pa v|^2 p_{t}+\int e^p\na v\cdot\na p\pa_{t}v \\
   &=\frac12\frac{d}{dt}\| e^\frac{p}{2}\pa v\|_{L^2}^2 +\frac 12 
   \int e^p q^{\prime} \cdot \Bigl( |\partial_+ v|^2+ \frac {|\partial_{\theta} v|^2}{r^2}  \Bigr)
   =\frac12\frac{d}{dt}\| e^\frac{p}{2}\pa v\|_{L^2}^2 +\frac 12 
   \int e^p q^{\prime} |T v|^2.
\end{align*}

To simplify the notation in the subsequent nonlinear estimates, we introduce the following
terminology.

\noindent
\textbf{Notation}.  For a quantity $X(t)$, we shall write $X(t)= \mathrm{OK}$  if $X(t)$ can be written as
\begin{align} \label{No3.5}
X(t)= \frac d {dt} X_1 (t)+X_2(t) +X_3(t), 
\end{align}
where  (below $\alpha_0>0$ is some constant)
\begin{align}
|X_1(t)| \ll \| (\partial \Gamma^{\le m}  u)(t,\cdot) \|_{L_x^2(\mathbb R^2)}^2, 
\quad |X_2(t)| \ll    \sum_{|\alpha| \le m} \int e^p q^{\prime} |(T \Gamma^{\alpha} u)(t,x) |^2 dx,
\quad |X_3(t)| \lesssim \langle t \rangle^{-1-\alpha_0}.
\end{align}
In yet other words, the quantity $X$ will be controllable  if either it can be absorbed into
the energy, or can be controlled by the weighted $L^2$-norm of the good
unknowns  from the Alinhac weight,  or it is integrable in time.

We now proceed with the nonlinear estimates. We shall discuss several cases.

\subsection{The case $\alpha_1<\alpha$ and $\alpha_2<\alpha$}
Since $g^{kij}_{\alpha_1,\alpha_2}$ still
satisfies the null condition, by \eqref{2.10A} we have
\begin{align}
  &\sum_{\substack{\alpha_1<\alpha,  \alpha_2 <\alpha \\ \alpha_1+\alpha_2\le \alpha}}
g^{kij}_{\alpha_1,\alpha_2} \partial_k \Gamma^{\alpha_1} u 
\partial_{ij} \Gamma^{\alpha_2} u    \notag \\
= &\sum_{\substack{\alpha_1<\alpha,  \alpha_2 <\alpha \\ \alpha_1+\alpha_2\le \alpha}}
g^{kij}_{\alpha_1,\alpha_2}
(T_k \Gamma^{\alpha_1} u \partial_{ij} \Gamma^{\alpha_2} u
-\omega_k \partial_t \Gamma^{\alpha_1 } u T_i \partial_j \Gamma^{\alpha_2} u
+ \omega_k \omega_i \partial_t \Gamma^{\alpha_1} u T_j
\partial_t \Gamma^{\alpha_2} u ). 
\end{align}

\texttt{Estimate of $\| T_k \Gamma^{\alpha_1} u  \partial^2 \Gamma^{\alpha_2} u\|_2$}.
If $|\alpha_1|\le |\alpha_2|$,  then by Lemma \ref{lem2.6} we have
\begin{align}
 \| T_k \Gamma^{\alpha_1} u  \partial^2 \Gamma^{\alpha_2} u\|_2
 \lesssim
 \| \frac {T_k \Gamma^{\alpha_1} u } {\langle r-t\rangle}
 \|_{\infty} \cdot
 \| \langle r-t\rangle \partial^2 \Gamma^{\alpha_2}u \|_2 
 \lesssim  t^{-\frac 32}.
 \end{align}
 If $|\alpha_1|>|\alpha_2|$, then we have
 \begin{align}
 \| T_k \Gamma^{\alpha_1} u  \partial^2 \Gamma^{\alpha_2} u\|_2
 \lesssim
 \| \frac {T_k \Gamma^{\alpha_1} u } {\langle r-t\rangle}
 \|_{2} \cdot
 \| \langle r-t\rangle \partial^2 \Gamma^{\alpha_2}u \|_{\infty}
 \lesssim t^{-1} \cdot  t^{-\frac 12} 
 \lesssim t^{-\frac 32} .
 \end{align}

\texttt{Estimate of $\| \partial \Gamma^{\alpha_1} u T \partial \Gamma^{\alpha_2} u\|_2$}.
If $|\alpha_1| \le |\alpha_2|$ we have
\begin{align}
\| \partial \Gamma^{\alpha_1} u T \partial \Gamma^{\alpha_2} u\|_2
\lesssim \| \partial \Gamma^{\alpha_1} u \|_{\infty}
\cdot \| T \partial \Gamma^{\alpha_2} u \|_2 \lesssim t^{-\frac 32}.
\end{align}
If $|\alpha_1| > |\alpha_2|$ we have
\begin{align}
\| \partial \Gamma^{\alpha_1} u T \partial \Gamma^{\alpha_2} u\|_2
\lesssim \| \partial \Gamma^{\alpha_1} u \|_{2}
\cdot \| T \partial \Gamma^{\alpha_2} u \|_{\infty} \lesssim  t^{-\frac 32}.
\end{align}

Collecting the estimates, we have proved
\begin{align} \label{4.13U}
  &\| \sum_{\substack{\alpha_1<\alpha,  \alpha_2 <\alpha \\ \alpha_1+\alpha_2\le \alpha}}
g^{kij}_{\alpha_1,\alpha_2} \partial_k \Gamma^{\alpha_1} u 
\partial_{ij} \Gamma^{\alpha_2} u    \|_2 \lesssim  t^{-\frac 32}.
\end{align}

\subsection{The case $\al_{2}=\al$.}
Noting that $g^{kij}_{0,\alpha}=g^{kij}$, we have
\begin{align}
 \int g^{kij}\pa_{k}u\pa_{ij}v\pa_{t}v e^{p}
 &= \OK \underbrace{- \int g^{kij}\pa_{jk}u\pa_{i}v\pa_{t}v e^{p}}_{I_1}\underbrace{-\int g^{kij}\pa_{k}u\pa_{i}v\pa_{t}v \pa_{j}(e^{p})}_{I_2}-\int g^{kij}\pa_{k}u\pa_{i}v\pa_{tj}v e^{p}.
\end{align}
Here in the above, the term ``OK" is zero if $\partial_j =\partial_1$ or $\partial_2$. 
This term is nonzero when $\partial_j = \partial_t$, i.e. we should absorb it into the energy when
integrating by parts in the time variable. 

Further integration by parts gives
\begin{align}
  -\int g^{kij}\pa_{k}u\pa_{i}v\pa_{tj}v e^{p}
  &=\OK+ \underbrace{\int g^{kij}\pa_{tk}u\pa_{i}v\pa_{j}v e^{p}}_{I_3}+\underbrace{\int g^{kij}\pa_{k}u\pa_{i}v\pa_{j}v \pa_{t}(e^{p})}_{I_4}+\int g^{kij}\pa_{k}u\pa_{it}v\pa_{j}
  v e^{p}.
  \end{align}
\begin{align}
  \int g^{kij}\pa_{k}u\pa_{it}v\pa_{j}v e^{p}&=\OK \underbrace{-\int g^{kij}\pa_{ik}u\pa_{t}v\pa_{j}v e^{p}}_{I_5}\underbrace{-\int g^{kij}\pa_{k}u\pa_{t}v\pa_{j}v \pa_{i}(e^{p})}_{I_6}-\int g^{kij}\pa_{k}u\pa_{t}v\pa_{ij}v e^{p}.
\end{align}
It follows that
$$2\int g^{kij}\pa_{k}u\pa_{ij}v\pa_{t}v e^{p}=(I_1+I_3+I_5)+(I_2+I_4+I_6) +\OK.$$

Observe that if $\varphi=\pa_{k}u$ or $\varphi=e^{p}$, then 
\begin{align}\label{varphi1}
   &-\pa_{j}\varphi\pa_{i}v\pa_{t}v+\pa_{t}\varphi\pa_{i}v\pa_{j}v-\pa_{i}\varphi\pa_{t}v\pa_{j}v
   \notag\\
  =&-T_{j}\varphi\pa_{i}v\pa_{t}v+\omega_{j}\pa_{t}\varphi\pa_{i}v \pa_{t} v+\pa_{t}\varphi\pa_{i}v\pa_{j}v-T_{i}\varphi\pa_{t}v\pa_{j}v+\omega_{i}\pa_{t}\varphi\pa_{t}v T_{j}v-\omega_{i}\omega_{j}\pa_{t}\varphi(\pa_{t}v)^2 \notag \\
  =&-T_{j}\varphi\pa_{i}v\pa_{t}v+\pa_{t}\varphi\pa_{i}v T_{j} v-T_{i}\varphi\pa_{t}v\pa_{j}v+\omega_{i}\pa_{t}\varphi\pa_{t}v T_{j}v-\omega_{i}\omega_{j}\pa_{t}\varphi(\pa_{t}v)^2 \notag\\
  =&-T_{j}\varphi\pa_{i}v\pa_{t}v+\pa_{t}\varphi T_{i}v T_{j} v-T_{i}\varphi\pa_{t}v\pa_{j}v-\omega_{i}\omega_{j}\pa_{t}\varphi(\pa_{t}v)^2.
 \end{align}
By \eqref{varphi1} and rewriting $\partial_t \varphi = \partial_k \partial_t u
=T_k \partial_t u -\omega_k \partial_{tt} u$, we have
\begin{align}
I_1+I_3+I_5=&\;\int g^{kij}(
-T_{j}\partial_k u \pa_{i}v\pa_{t}v+\pa_{t}\partial_k u T_{i}v T_{j} v-T_{i}\partial_k u\pa_{t}v\pa_{j}v-\omega_{i}\omega_{j}T_k \partial_t u(\pa_{t}v)^2) e^pdx.
\end{align}
By Lemma \ref{lem2.6}, we have $\|T \partial u\|_{\infty} \lesssim t^{-\frac 32} $
and $\| \langle r-t\rangle \partial^2 u \|_{\infty} \lesssim t^{-\frac 12} $. Clearly then
\begin{align}
\int_{\text{$r<\frac t2$ or $r>2t$} } |\partial^2 u| |T v|^2 dx\lesssim t^{-\frac 32} ,
\quad \int_{r \sim t } |\partial^2 u | |Tv|^2 dx 
\ll \int e^p q^{\prime} |Tv|^2 dx. 
\end{align}
It follows that 
\begin{align}
I_1+I_3+I_5 =\OK.
\end{align}
Plugging $\varphi=e^{p}$ in \eqref{varphi1} and noting that $T_j (e^p)=0$, we  have
\begin{align*}
    I_2+I_4+I_6    
    &=\int g^{kij}\pa_k u\Big(-T_j(e^p)\pa_iv \pa_tv- T_i(e^p)\pa_tv\pa_jv-\omega_i\omega_j(\pa_tv)^2\pa_t(e^p)+\pa_t(e^p)T_iv T_jv \Big) \notag \\
  &=   \int g^{kij}\left(  -T_k u\cdot \omega_i\omega_j (\pa_tv)^2 \pa_t(e^p) +\partial_k u\pa_t(e^p)T_i v T_j v \right).
\end{align*}
By Lemma \ref{lem2.6} we have $ \Bigl| |Tu| |\partial_t (e^p)| \Bigr|\lesssim t^{-\frac 32}$. 
Clearly
\begin{align}
\|\partial u \partial_t (e^p)\|_{L_x^{\infty}(r<\frac t2,\, \text{or } r>2t)} \lesssim t^{-\frac 32},
\quad \int_{r\sim t} |\partial u \partial_t (e^p)| |Tv|^2 dx \ll \int e^p q^{\prime} |Tv|^2 dx. 
\end{align}
Thus
\begin{align*}
  I_{2}+I_{4}+I_{6} = \OK. 
\end{align*}
This concludes the case $\alpha_2=\alpha$. In the next section we deal with the main
piece $\alpha_1=\alpha$. 

\section{Estimate of the main piece $\al_{1}=\al$, $\al_{2}=0$}
In this section we estimate the main piece $\alpha_1=\alpha$. By \eqref{2.10A}, we have 
\begin{align*}
 \int g^{kij}\pa_{k}v\pa_{ij}u\pa_{t}v e^{p}
  =&\int g^{kij} (T_{k}v\pa_{ij}u-\omega_{k}\pa_{t}vT_{i}\pa_{j}u+\omega_{k}\omega_{i}\pa_{t}v T_{j}\pa_{t}u)\pa_{t}v e^{p}\\
  =&\int g^{kij} (T_{k}vT_{i}\pa_{j}u-\omega_{i}T_{k}vT_{j}\pa_{t}u+\omega_{i}\omega_{j}T_{k}v\pa_{tt}u-\omega_{k}\pa_{t}vT_{i}\pa_{j}u+\omega_{k}\omega_{i}\pa_{t}v T_{j}\pa_{t}u)\pa_{t}v e^{p}.
\end{align*}
By Lemma \ref{lem2.6}, all terms containing $T\partial u$ decay as $O(t^{-\frac 32} )$.
Thus 
\begin{equation}\label{eq:al1-00}
  \int g^{kij} \pa_{k}v\pa_{ij}u\pa_{t}v e^{p}= \OK+ \int g^{kij} \omega_{i}\omega_{j}T_{k}v\pa_{tt}u\pa_{t}v e^{p}. 
\end{equation}
Recall $T_{0}=0$, $T_{1}=\omega_{1}\pa_{+}-\frac{\omega_{2}}{r}\pa_{\theta}$,  $T_{2}=\omega_{2}\pa_{+}+\frac{\omega_{1}}{r}\pa_{\theta}$. We have
\begin{align*}
 g^{kij} \omega_{i}\omega_{j}T_{k}v&= g^{1ij} \omega_{i}\omega_{j}(\omega_{1}\pa_{+}v-\frac{\omega_{2}}{r}\pa_{\theta}v)
  +g^{2ij} \omega_{i}\omega_{j}(\omega_{2}\pa_{+}v+\frac{\omega_{1}}{r}\pa_{\theta}v)\\
  &= (g^{1ij} \omega_{1}\omega_{i}\omega_{j}+g^{2ij} \omega_{2}\omega_{i}\omega_{j})\partial_+v +\omega_i\omega_j (g^{2ij}\omega_1
  -g^{1ij} \omega_2) \frac 1 r \partial_{\theta} v \notag \\
  &=: h_1(\theta) \partial_+ v + h_2(\theta) \frac 1 r \partial_{\theta} v.\\
\end{align*}
We first estimate the piece 
\begin{align}
\int h_1 (\theta) \partial_+ v \pa_{tt}u\pa_{t}v e^{p}. 
\end{align}
The other piece will be estimated in the next section.

Choose nonnegative radial  $\tilde \phi_1 \in C_c^{\infty}(\mathbb R^2)$ such that $\tilde \phi_1(z)=1$ for 
$\frac 23 \le |z| \le \frac 32$ and $\tilde \phi_1(z)=0$ for $|z|\le \frac 13$ or $|z|\ge 2$.
Denote $\phi(x) =\tilde \phi_1(\frac x t)$. Then
\begin{equation}\label{eq:al1-01}
\begin{split}
  \int h_1(\theta) \partial_+v\pa_{tt}u\pa_{t}v e^{p}
  &= \int h_1 (\theta)\pa_{+}v\pa_{tt}u\pa_{t}v e^{p}\cdot\left(1-\phi\right)+\int h_1(\theta)\pa_{+}v\pa_{tt}u\pa_{t}v e^{p}\phi.
  \end{split}
\end{equation}
By Lemma \ref{lem2.6}, we have
\begin{equation}\label{eq:al1-02}
  \int h(\theta)\pa_{+}v\pa_{tt}u\pa_{t}v e^{p}\cdot\left(1-\phi \right)\lesssim t^{-1}\int |\pa v|^2|\Lg r-t\Rg\pa_{tt}u| \lesssim t^{-\frac 32} = \OK.
\end{equation}
By using the identity $\pa_{t}=\frac{\pa_{+}+\pa_{-}}{2}$ and 
the fact that $\| \langle r -t \rangle \partial^2 u \|_{\infty} \lesssim t^{-\frac 12} $, we get
\begin{align}\label{eq:al1-03}
  2\int h_1(\theta)\pa_{+}v\pa_{tt}u\pa_{t}v e^{p}\phi &=\int h_1(\theta)\pa_{+}v\pa_{tt}u\pa_{+}v e^{p}\phi+\int h_1 (\theta)\pa_{+}v\pa_{tt}u\pa_{-}v e^{p}\phi \notag \\
  &=\OK+ \int h_1(\theta)\pa_{+}v\pa_{tt}u\pa_{-}v e^{p}\phi.
\end{align}
Integrating by parts, we have
\begin{align}
  \int h_1(\theta)\pa_{+}v\pa_{tt}u\pa_{-}v e^{p}\phi  \cdot  rdr d\theta
  &=\;\frac{d}{dt}\int h_1 (\theta)v\pa_{tt}u\pa_{-}v e^{p}\phi dx
   -\int h_1 (\theta)v\pa_{-}v \pa_{+}\left(\pa_{tt}ue^{p}\phi\right) dx
  \notag \\
  & \quad -\int h_1 (\theta)v\pa_{tt}u\pa_{+}\pa_{-}v e^{p}\phi dx
  -\int h_1 (\theta) v \partial_{tt} u \partial_- v e^p \phi \frac 1r dx.
 \end{align}
 In the above computation, one should note that when integrating by parts in $r$ we should
 take into consideration the factor $r$ in the metric $rdr$. The fourth term exactly corresponds to the derivative
 of the metric factor.
The first  and fourth terms are clearly acceptable by using Hardy and the decay of $\langle r-t\rangle \partial_{tt}u$. 
For the second term we have
\begin{align}
  \left| \langle r -t\rangle \pa_{+}\left(\pa_{tt}ue^{p}\phi\right)\right|
  &\lesssim \left|\langle r -t\rangle \pa_{+}\pa_{tt}u\phi\right|+\left|\langle r-t\rangle \pa_{tt}u\pa_{+}\phi\right|  \notag \\
  &\lesssim  t^{-1}\|2\langle r -t \rangle L_0\pa_{tt}u- \langle r-t\rangle ( t-r)\pa_{-}\pa_{tt}u
  \|_{L_x^{\infty}(|x|>\frac t{10})} + t^{-\frac 32}\lesssim t^{-\frac32}. \label{5.6A}
\end{align}
Here in the derivation of \eqref{5.6A}, we used Lemma \ref{lem2.6} and the inequalities
\begin{align}
&|\langle r -t\rangle L_0  \partial_{tt} u|
\lesssim | \langle r -t \rangle \partial_{tt} \Gamma^{\le 1} u |
\lesssim t^{-\frac 12}, \qquad \text{for }r \ge t/10.
\end{align}
For the third term we use the identity $\pa_{+}\pa_{-}v=\Box v+\frac{\pa_{r}v}{r}+\frac{\pa_{\theta\theta}v}{r^2}$ and compute it as
\begin{equation}\label{eq:al1-2}
 \begin{split}
  &\int h_1 (\theta)v\pa_{tt}u\pa_{+}\pa_{-}v e^{p}\phi\\
  =&\int h_1 (\theta)v\pa_{tt}u\left(\frac{\pa_{r}v}{r} +\frac{\pa_{\theta\theta}v}{r^2}\right)e^{p}\phi
  +\sum_{\be_{1}+\be_{2}\leq \al}\int h(\theta)v\pa_{tt}u\cdot g_{\be_{1},\be_{2}}^{kij}\pa_{k}\Ga^{\be_{1}}u\pa_{ij}\Ga^{\be_{2}}ue^{p}\phi.
 \end{split}
\end{equation}
Integrating by parts (for the term $\partial_{\theta\theta}v$), we have
\begin{align*}
  &\int h_1 (\theta)v\pa_{tt}u\left(\frac{\pa_{r}v}{r}+\frac{\pa_{\theta\theta}v}{r^2}\right)e^{p}\phi\\
  =& \int h_1 (\theta) \frac {v}{\langle r -t\rangle}
  \langle r -t\rangle \partial_{tt} u \partial_r v \cdot \frac 1r e^p \phi 
  -\int h_1 (\theta)\pa_{tt}u\left(\frac{\pa_{\theta}v}{r}\right)^2e^{p}\phi-\int \pa_{\theta}(h_1 (\theta)\pa_{tt}u)v\frac{\pa_{\theta}v}{r^2}e^{p}\phi\\
  =& \OK.
  \end{align*}
By \eqref{4.13U}, we have
\begin{equation*}
  \sum_{\be_{1}< \al,\be_{2}< \al,\atop \be_{1}+\be_{2}\leq \al}\int h_1 (\theta)v\pa_{tt}u \cdot g_{\be_{1},\be_{2}}^{kij}\pa_{k}\Ga^{\be_{1}}u\pa_{ij}\Ga^{\be_{2}}u e^{p}\phi\lesssim   t^{-2}=\OK.
\end{equation*}
For the term  $\be_{1}=\al$, $\be_{2}=0$ in \eqref{eq:al1-2}, it follows from \eqref{a2.12a} that
\begin{align*}
  \int g ^{kij}h_1 (\theta)v\pa_{tt}u \pa_{k}v\pa_{ij}u e^{p}\phi
  \lesssim &\int|v\pa_{tt}u||T v\pa^2u|e^{p}\phi+\int|v\pa_{tt}u||\pa v||T\pa u| e^{p}\phi\\
  \lesssim &\int|T v|^2|\pa^2u|e^{p}\phi+t^{-\frac32}\left\|\Lg r-t\Rg^{-1}v\right\|_{L_{x}^2(\R^2)}^2+t^{-2}\\
=& \OK.
\end{align*}

For the term  $\be_{1}=0$, $\be_{2}=\al$ in \eqref{eq:al1-2}, we apply \eqref{2.10A} to obtain
\begin{align*}
  \int g ^{kij}h_1 (\theta)v\pa_{tt}u\pa_{k}u \pa_{ij}v e^{p}\phi
  = \int g ^{kij}h_1 (\theta)v\pa_{tt}u\cdot(T_{k}u\pa_{ij}v-\omega_{k}\pa_{t}uT_{i}\pa_{j}v+\omega_{k}\omega_{i}\pa_{t}u T_{j}\pa_{t}v) e^{p}\phi.
\end{align*}
We rewrite it as
\begin{align*}
  \int g ^{kij}h_1 (\theta)v\pa_{tt}uT_{k}u\pa_{ij}v e^{p}\phi
  =&\int g ^{kij}\pa_{i}(h_1 (\theta)v\pa_{tt}uT_{k}u\pa_{j}v e^{p}\phi)
  -\int g ^{kij}\pa_{i}(h_1 (\theta) T_{k}u e^{p}\phi )v \pa_{tt}u\pa_{j}v\\
  &-\int g ^{kij}h_1(\theta)v\pa_{i}\pa_{tt}uT_{k}u\pa_{j}v e^{p}\phi-\int g ^{kij}h_1 (\theta)\pa_{i}v\pa_{tt}uT_{k}u\pa_{j}v e^{p}\phi.
\end{align*}
The term $\int g ^{kij}\pa_{i}(h_1 (\theta)v\pa_{tt}uT_{k}u\pa_{j}v e^{p}\phi)$ is 
zero for $i\ne 0$. For $i=0$ it is clearly acceptable since it can be absorbed into the time
derivative of the energy due to its smallness. By Lemma \ref{lem2.3a} and \ref{lem2.6}, we have
\begin{align*}
  |\pa_{i}(h_1 (\theta) T_{k}u e^{p}\phi )|&\lesssim |\pa_{i}h_1 (\theta) T_{k}u e^{p}\phi |+|h_1(\theta)\pa_{i} T_{k}u e^{p}\phi |+|h_1 (\theta) T_{k}u \pa_{i}e^{p}\phi |+|h_1 (\theta) T_{k}u e^{p}\pa_{i}\phi |\\
  &\lesssim  t^{-\frac32}+|h_1 (\theta)\pa_{i} \omega_{k}\pa_{t}u e^{p}\phi |+|h_1 (\theta)T_{k}\pa_{i}u e^{p}\phi |+\left|h_1 (\theta) \frac{T_{k}u}{\Lg r-t\Rg} \phi \right|
  \lesssim  t^{-\frac32}.
 \end{align*}
The term containing $v\partial_i \partial_{tt} u$ can be handled by \eqref{2.30D}. Thus
\begin{align*}
  &\int g ^{kij}h_1 (\theta)v\pa_{tt}uT_{k}u\pa_{ij}v e^{p}\phi
  = \OK.
\end{align*}
Similarly, we have
\begin{align*}
  &\int g ^{kij}\omega_{k}h_1(\theta)v\pa_{tt}u\pa_{t}uT_{i}\pa_{j}v e^{p}\phi
  =\int g ^{kij}\omega_{k}h_1 (\theta)v\pa_{tt}u\pa_{t}u\left(\pa_{j}T_{i}v-\pa_{j}\omega_{i}\pa_{t}v \right) e^{p}\phi =\OK,\\
  &\int g ^{kij}\omega_{k}\omega_{i}h_1 (\theta)v\pa_{tt}u\pa_{t}uT_{j}\pa_{t}v e^{p}\phi=\int g ^{kij}\omega_{k}\omega_{i}h_1 (\theta)v\pa_{tt}u\pa_{t}u\pa_{t}T_{j}v e^{p}\phi = \OK.
\end{align*}
This concludes the estimate of the first part of the main piece.

\section{further estimates}
We now denote $h(\theta)=h_2(\theta)$ and consider the second part of the main piece 
\begin{align}
\int h(\theta) \frac 1 r \partial_{\theta} v \partial_{tt} u \partial_t v e^p dx.
\end{align}
Since $\| \langle r -t\rangle \partial_{tt} u \|_{\infty}
\lesssim t^{-\frac 12}$, it follows that
\begin{align}
&\int h(\theta) \frac 1 r \partial_{\theta} v \partial_{tt} u \partial_t v e^p dx 
= \OK+\int h(\theta) \frac 1 r \partial_{\theta} v \partial_{tt} u \partial_t v \tilde \phi (\frac x t) e^p dx,
\end{align}
where $\tilde \phi $ is a radial bump function  localized to $|z|\sim 1$. Denote 
$\phi(z)=\frac 1{|z|} \tilde \phi(z)$. Then 
\begin{align}
& \int h(\theta) \frac 1 r \partial_{\theta} v \partial_{tt} u \partial_t v \tilde \phi (\frac x t) e^p dx \notag \\
=& \frac 1t \int h(\theta) \partial_{\theta} v \partial_{tt} u \partial_t v \phi (\frac x t) e^p dx \notag \\
=& \OK+ \frac 1 t \int h(\theta) v \partial_{tt} u \partial_t \partial_{\theta} v \phi
(\frac x t) e^p dx \notag \\
=& \OK + \frac 1 t \int   \underbrace{ h(\theta) \langle r -t \rangle  \partial_{tt} u \phi(\frac x t) 
e^p }_{=: F} \underbrace{ \frac {v} {\langle r -t \rangle} }_{=: \tilde v} \partial_t \partial_{\theta} v dx. 
\end{align}
 Note that 
\begin{align}
&\| \partial F \|_{\infty} + \|F \|_{\infty} \lesssim t^{-\frac 12} E_4^{\frac 12},
\quad \| \tilde v \|_2 + \| \nabla \tilde v \|_2 \lesssim E_m^{\frac 12}; \\
& \| \langle \nabla 
\rangle ( F \tilde v) \|_2 \lesssim \| F \tilde v \|_2 + \| \nabla (F \tilde v ) \|_2 \lesssim t^{-\frac 12}
E_4^{\frac 12} E_m^{\frac 12}.
\end{align}
It follows that
\begin{align}
\Bigl| \int F \tilde v  \partial_t \partial_{\theta} v dx\Bigr|
\lesssim t^{-\frac 12} E_4^{\frac 12} E_m^{\frac 12} \| \langle \nabla \rangle^{-1} \partial_t \partial_{\theta} v \|_2.
\end{align}
Recall that $v=\Gamma^{\alpha} u$ with $|\alpha| \le m$. 
Thus we only need to show (below we take $0< \delta<1/4$)
\begin{align}
\| \langle \nabla \rangle^{-1} \partial_t \Gamma^{\le m+1} u \|_2 \le D_1 t^{\delta},
\quad 
\end{align}
where $D_1$ is a small constant whose smallness can be ensured by the smallness of $E_m$. 
The legitimacy of the nonlocal norm $\| \langle \nabla \rangle^{-1} \partial
\Gamma^{\le m+1} u \|_2$ is ensured by Lemma \ref{lem_Nonlocal1}.

\subsection{Estimate of $\| \langle \nabla \rangle^{-1} \partial  \Gamma^{\le m+1} u \|_2$}
For each multi-index $\beta$ with $|\beta|\le m+1$, we have
\begin{align}
\square \Gamma^{\beta} u
= \sum_{\alpha_1+\alpha_2 \le \beta}
g^{kij}_{\beta;\alpha_1,\alpha_2} \partial_k \Gamma^{\alpha_1} u 
\partial_{ij} \Gamma^{\alpha_2} u,
\end{align}
where $g^{kij}_{\beta,\alpha_1,\alpha_2}$ still satisfies the null conditions 
for each ($\beta$, $\alpha_1$, $\alpha_2$). Moreover
$g^{kij}_{\beta,\beta,0} =g^{kij}_{\beta,0,\beta} = g^{kij}$. 

We first compute the left hand side. By using the Littlewood-Paley decomposition (see
\eqref{LP_def1}), we have
\begin{align}
&\sum_{J\ge 0} \sum_{|\beta| \le  m+1} 2^{-2J} \int \square P_J \Gamma^{\beta} u \partial_t P_J \Gamma^{\beta} u e^p dx  \notag \\
=&\sum_{J\ge 0} \sum_{|\beta|\le m+1} 2^{-2J}\Bigl( \frac 12  \frac d {dt}
\| e^{\frac p2} \partial P_J \Gamma^{\beta} u \|_2^2 +
\frac 12 \int e^p q^{\prime} |T P_J \Gamma^{\beta} u|^2 dx \Bigr). \label{e5.8S0}
\end{align}
It is not difficult to check that 
\begin{align}
\sum_{J\ge 0}
\sum_{|\beta|\le m+1}
2^{-2J}
\| e^{\frac p2} \partial P_J \Gamma^{\beta} u \|_2^2 
\sim \sum_{|\beta|\le m+1} \| \langle \nabla \rangle^{-1} \partial \Gamma^{\beta} u\|_2^2.
\end{align}

To simplify the notation in the subsequent nonlinear estimates, we introduce the following
terminology.

\noindent
\textbf{Notation}.  For a quantity $X(t)$, we shall write $X(t)= \mathrm{NICE}$  if $X(t)$ can be written as
\begin{align} \label{NiceNot}
X(t)= \frac d {dt} X_1 (t)+X_2(t) +X_3(t), 
\end{align}
where  (below $\alpha_0>0$ is some constant)
\begin{align}
&|X_1(t)| \ll 
\sum_{|\beta|\le m+1}
\| (\langle \nabla \rangle^{-1} \partial \Gamma^{\beta}  u)(t,\cdot) \|_{L_x^2(\mathbb R^2)}^2, 
\quad |X_2(t)| \ll    \sum_{J\ge 0} \sum_{|\beta| \le  m+1} 
2^{-2J}\int e^p q^{\prime} |(T P_J \Gamma^{\beta}  u)(t,x) |^2 dx ;\notag \\
&|X_3(t)| \lesssim \langle t \rangle^{-1-\alpha_0}.
\end{align}

\medskip

\medskip

Next we shall deal with the RHS, namely 
\begin{align}
\sum_{|\beta| \le m+1} 
\sum_{\alpha_1+\alpha_2\le \beta}
\sum_{J\ge 0} 2^{-2J}
\Bigl( g^{kij}_{\beta;\alpha_1,\alpha_2} 
\int P_J( \partial_k \Gamma^{\alpha_1} u \partial_{ij} \Gamma^{\alpha_2} u )
\partial_t P_J (\Gamma^{\beta} u ) e^p dx\Bigr).
\end{align}

We shall discuss several cases. To simplify the notation, we fix $\beta$ and denote $
w=\Gamma^{\beta} u$.   The most difficult case is the quasilinear piece which will be discussed
in detail below.

 Case 1: \underline{the quasilinear piece $\alpha_1=0$, $\alpha_2=\beta$}. 
In this case we need to estimate
 \begin{align}
\sum_{J\ge 0} 2^{-2J} g^{kij} \int P_J( \partial_k u \partial_{ij} w) \partial_t P_J w e^p dx.
\end{align}
 We discuss several further subcases.

 Case 1a: the piece 
 \begin{align}
 &\sum_{J\ge 8} 2^{-2J} g^{kij} \int P_J ( \partial_k u \partial_{ij} P_{[J-3, J+3]} w)
 \partial_t P_J w e^p dx \notag \\
= &\sum_{J\ge 8} 2^{-2J} g^{kij} \int \partial_k u \partial_{ij} P_J w
 \partial_t P_J w e^p dx \label{e5.34S1}\\
& \quad +\sum_{J\ge 8} 2^{-2J} g^{kij} \int \Bigl([P_J,  \partial_k u ]\partial_{ij} P_{[J-3,J+3]} w\Bigr)
 \partial_t P_J w e^p dx. \label{e5.34S0}
 \end{align}

 It is not difficult to check that the contribution of \eqref{e5.34S1} is acceptable for us. We now
 focus on the estimate of \eqref{e5.34S0}.  For simplicity of notation, we denote
 \begin{align}
\boxed{w_J =P_{[J-3, J+3]}w.}
\end{align}

 Clearly
 \begin{align}
  & \sum_{J\ge 8} 2^{-2J} g^{kij}  \int \Bigl([P_J,  \partial_k u ]\partial_{ij} w_J\Bigr)
 \partial_t P_J w e^p dx \notag \\
 = & \sum_{J\ge 8} 2^{-2J} g^{kij} \int \int 2^{2J} \varphi(2^J y) ( (\partial_k u)(x-y) - (\partial_k u)(x) )
 (\partial_{ij} w_J)(x-y) dy \partial_t P_J w e^p dx \notag \\
 =& \sum_{J\ge 8} 2^{-2J} g^{kij}\sum_{m=1}^2 \int \int \int_0^1  2^J \varphi_m(2^J y)
 (\partial_m \partial_k  u)(x-\theta y) 
 (\partial_{ij} w_J)(x-y) \partial_t P_J w e^p d\theta dy dx, \label{e5.38S0}
 \end{align} 
 where $\varphi$ and $\phi_m$ are Schwartz functions. Here $2^{2J} \varphi(2^J \cdot)$ is the kernel function corresponding to $P_J$. For $J=0$ and $J\ge 1$ we have slightly different expressions for $\varphi$.
But we shall ignore this difference for simplicity of notation. 
 
 We first need an auxiliary estimate. 
 \begin{lem} \label{Lem5.2S0}
 We have
 \begin{align}
&\sum_{i=1}^2 \| \partial \partial_i P_{\le J+3} w \|_2 \lesssim 2^J \| \partial P_{\le J+3} w \|_2
; \notag \\
& \| \square P_{\le J+3} w \|_2 \lesssim 
2^J t^{-\frac 12} \| \partial P_{\le J+5} w \|_2
+t^{-\frac 12} 
\| \partial P_{\le J+5} \Gamma^{\le m+1} u \|_2
+ t^{-\frac 12} 
\| \langle \nabla \rangle^{-1} \partial \Gamma^{\le m+1} u\|_2;  \notag\\
& \| \partial_{tt} P_{\le J+3} w \|_2 \lesssim 2^J \| \partial P_{\le J+3} w \|_2
 +2^J t^{-\frac 12} \| \partial P_{\le J+5} w \|_2
\notag \\
& \qquad \qquad \qquad +t^{-\frac 12}
\| \partial P_{\le J+5} \Gamma^{\le m+1} u \|_2
+ t^{-\frac 12} 
\| \langle \nabla \rangle^{-1} \partial \Gamma^{\le m+1} u\|_2. \notag
 \end{align}
 The same estimates hold when $P_{\le J+3} w$ on the LHS above  is replaced
 by $\boxed{w_J=P_{[J-3,J+3]} w}$. 
 \end{lem}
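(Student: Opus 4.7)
My plan is to prove the three displayed estimates in turn. The first inequality is a direct consequence of the Bernstein inequality: since every $\partial\in\{\partial_t,\partial_1,\partial_2\}$ commutes with the spatial Fourier multiplier $P_{\le J+3}$, we can write $\partial\,\partial_i P_{\le J+3} w=\partial_i (\partial P_{\le J+3}w)$, and $\partial P_{\le J+3} w$ is still spatially frequency-localized in $|\xi|\lesssim 2^{J+3}$; applying spatial Bernstein to $\partial_i$ yields the advertised $2^J$ loss.

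For the second estimate, the starting point is the commutation $\square P_{\le J+3} w = P_{\le J+3}\square w$, valid because $\square$ has constant coefficients. Combining this with Lemma \ref{Lem2.3} gives
\begin{align*}
\square P_{\le J+3} w \;=\; P_{\le J+3}\!\!\sum_{\alpha_1+\alpha_2\le\beta}\! g^{kij}_{\beta;\alpha_1,\alpha_2}\,\partial_k\Gamma^{\alpha_1}u\cdot\partial_{ij}\Gamma^{\alpha_2}u.
\end{align*}
Because $|\beta|\le m+1$ and $m\ge 5$, one always has $\min(|\alpha_1|,|\alpha_2|)\le m-2$, and the corresponding factor enjoys the pointwise bound $\|\partial\Gamma^{\le m-2}u\|_{\infty}\lesssim t^{-1/2}$ from \eqref{2.30A}. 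I would then carry out a Littlewood--Paley paraproduct decomposition of the bilinear expression with threshold $2^{J+5}$. In the low-frequency piece I place the low-order factor in $L^\infty$ (gaining $t^{-1/2}$) and treat the other factor as $P_{\le J+5}\partial_{ij}\Gamma^{\alpha_2}u$: when $\alpha_2=\beta$ (so $\Gamma^{\alpha_2}u=w$), I absorb $\partial_{ij}$ via spatial Bernstein into a factor $2^J\|\partial P_{\le J+5}w\|_2$, giving the first term; when $|\alpha_2|\le m$, one of the derivatives in $\partial_{ij}$ is absorbed into a $\Gamma$, so $\partial_{ij}\Gamma^{\alpha_2}u=\partial\Gamma^{\alpha_2'}u$ with $|\alpha_2'|\le m+1$, and no Bernstein is needed, producing the second term. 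For the high-frequency~$\times$~high-frequency piece where both inputs sit at dyadic frequency $2^k$ with $k>J+5$, I would use the $L^1\to L^2$ Bernstein $\|P_{\le J+3}(AB)\|_2\lesssim 2^J\|A\|_2\|B\|_2$ together with dyadic Cauchy--Schwarz: writing $P_k\partial$ as $2^k$ times a bounded Fourier multiplier exchanges one derivative on the top-order factor for a factor $2^k$, the gain $2^{J-k}\le 2^{-5}$ renders the sum convergent, and the result is bounded by $\|\partial\Gamma^{\le m-2}u\|_{\infty}\cdot\|\langle\nabla\rangle^{-1}\partial\Gamma^{\le m+1}u\|_2$, recovering the third term. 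The off-diagonal (low-high and high-low) contributions are almost annihilated by the output cutoff $P_{\le J+3}$ and are absorbed by the same three terms.

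The third estimate follows by writing $\partial_{tt}=\square+\Delta$: the Laplacian contribution $\|\Delta P_{\le J+3}w\|_2$ is bounded by $2^J\|\partial P_{\le J+3}w\|_2$ via the first estimate applied with $\partial=\partial_j$, and the d'Alembertian contribution is controlled by the second estimate. I expect the main obstacle to be the paraproduct analysis in the second estimate, and specifically the high-high regime: one must carefully orchestrate the frequency summation so that the loss of two derivatives on the top-order factor is compensated either by $\|\partial P_{\le J+5}\Gamma^{\le m+1}u\|_2$ or by the nonlocal norm $\|\langle\nabla\rangle^{-1}\partial\Gamma^{\le m+1}u\|_2$, without introducing an unwanted $2^J$ factor whenever the top-order factor is not $w$ itself.
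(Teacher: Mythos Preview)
Your treatment of the first and third estimates is fine and matches the paper. The gap is in the second estimate, specifically the term $\alpha_2=\beta$, $(i,j)=(0,0)$, i.e.\ the summand $g^{k00}\partial_k u\,\partial_{tt}w$. You write that ``when $\alpha_2=\beta$ \ldots\ I absorb $\partial_{ij}$ via spatial Bernstein into a factor $2^J\|\partial P_{\le J+5}w\|_2$'', but spatial Bernstein only trades a \emph{spatial} derivative for $2^J$; when $i=j=0$ there is no spatial derivative to trade, and $\|P_{\le J+5}\partial_{tt}w\|_2$ is precisely the quantity appearing on the left of the third estimate. Writing $\partial_{tt}=\square+\Delta$ at this point is circular: you would obtain $\|P_{\le J+3}\square w\|_2 \lesssim t^{-1/2}\|P_{\le J+5}\square w\|_2 + (\text{good})$, with the frequency threshold on the right shifted by two, so a direct absorption is not available. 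The same issue recurs in your high--high argument whenever the top-order factor carries $\partial_{tt}$.

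The paper fixes this by an algebraic step \emph{before} applying $P_{\le J+3}$: one isolates the offending term on the left, writing
\[
\square w \;=\; g^{k00}\partial_k u\,(\square w+\Delta w)\;+\;\sum_{(i,j)\ne(0,0)} g^{kij}\partial_k u\,\partial_{ij}w\;+\;\sum_{\alpha_2<\beta}(\cdots),
\]
and then solves pointwise for $\square w$, obtaining $\square w = (1-g^{k00}\partial_k u)^{-1}\bigl(g^{k00}\partial_k u\,\Delta w + \text{terms with at least one spatial index or }\alpha_2<\beta\bigr)$. Now every second-order factor hitting $w$ is $\Delta w$ or $\partial_{ij}w$ with $(i,j)\ne(0,0)$, to which your spatial Bernstein genuinely applies, while the prefactor $\tilde f=(1-g^{k00}\partial_k u)^{-1}$ still satisfies $\|\partial^{\le 3}\tilde f\|_\infty\lesssim t^{-1/2}$ and feeds into the paraproduct analysis exactly as you describe. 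Inserting this single algebraic resolution step makes your outlined argument go through.
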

 \begin{proof}
 The first estimate is obvious. We only need to show the second estimate
 since the third estimate follows from the identity $\partial_{tt} =\square +\Delta$.
 Observe that (for simplicity denote $g^{kij}_{\alpha_1,\alpha_2}=g^{kij}_{\beta;\alpha_1,\alpha_2}$)
\begin{align}
\square w
= \sum_{\alpha_1+\alpha_2\le \beta} g^{kij}_{\alpha_1,\alpha_2}
 \partial_k \Gamma^{\alpha_1} u \partial_{ij} \Gamma^{\alpha_2} u.
\end{align}
The main difficult term on the RHS is the case $\partial_{ij} =\partial_{tt}$,
$\alpha_2=\beta$. We rewrite the above as
\begin{align}
\square w = g^{k00} \partial_k u ( \square w +\Delta w) +
\sum_{\alpha_1+\alpha_2\le \beta, \alpha_2<\beta}
g^{kij}_{\alpha_1,\alpha_2} \partial_k \Gamma^{\alpha_1} u
\partial_{ij} \Gamma^{\alpha_2} u+
\sum_{(i,j)\ne (0,0)} g^{kij} \partial_k u \partial_{ij} w.
\end{align}
Thus (below the Einstein summation convention is still in force, e.g.
$g^{k00} \partial_k u = \sum_{k=0}^2 g^{k00} \partial_k u $)
\begin{align}
\square w =  \frac 1 { 1- g^{k00} \partial_k u}  ( g^{k00} \partial_k u \Delta w+
\sum_{\alpha_1+\alpha_2 \le \beta, \alpha_2<\beta}  g^{kij}_{\alpha_1,\alpha_2}
\partial_k \Gamma^{\alpha_1} u \partial_{ij} \Gamma^{\alpha_2} u
+ \sum_{(i,j) \ne (0,0)} g^{kij} \partial_k u \partial_{ij} w).
\end{align}
Denote $\tilde f = \frac {g^{k00} \partial_k u} { 1-g^{k00} \partial_k u}$.
Since $\| \partial \Gamma^{\le 3} u \|_{\infty} \lesssim t^{-\frac 12} E_5^{\frac 12}$, we have
$\| \partial^{\le 3} \tilde f \|_{\infty} \lesssim t^{-\frac 12}$. 
Clearly \begin{align}
\| P_{\le J+3} ( \tilde f \Delta w)
\|_2 &
\le \| P_{\le J+3} ( \tilde f P_{\le J+5}\Delta w) \|_2
+ \|  P_{\le J+3} ( \tilde f P_{\ge J+6} \Delta w) \|_2 \notag \\
& \lesssim\; 2^J t^{-\frac 12} \| \partial P_{\le J+5} w \|_2
+ t^{-\frac 12} \| \langle \nabla \rangle^{-1} \partial w \|_2.
\end{align}
By a similar estimate, we have
\begin{align}
\| P_{\le J+3}
( \frac 1 {1- g^{k00} \partial_k u} \sum_{(i,j)\ne (0,0)}
g^{kij} \partial_k u \partial_{ij} w ) \|_2
\lesssim \;2^J t^{-\frac 12} \| \partial P_{\le J+5} w \|_2
+ t^{-\frac 12} \| \langle \nabla \rangle^{-1} \partial w \|_2.
\end{align}
To estimate $\| P_{\le J+3}
\Bigl( \frac 1 {1-g^{k00}\partial_k u}
\sum_{\alpha_1+\alpha_2\le \beta,
\alpha_2< \beta}
g^{kij}_{\alpha_1,\alpha_2}
\partial_k \Gamma^{\alpha_1}u \partial_{ij} \Gamma^{\alpha_2} u\Bigr) \|_2$,
we denote $\tilde f_2= \frac 1 {1-g^{k00}\partial_k u}$ and consider the general expression
\begin{align}
\| P_{\le J+3} ( \tilde f_2 \partial \Gamma^{\alpha_1} u
\partial^2 \Gamma^{\alpha_2} u ) \|_2, 
\quad \alpha_1+\alpha_2 \le \beta, \; \alpha_2 <\beta.
\end{align}
We discuss a few cases. Recall $|\beta|\le m+1$,
$\| \partial \Gamma^{\le m-2} u \|_{\infty} \lesssim t^{-\frac 12} E_m^{\frac 12}$,
and $\| \partial^2 \Gamma^{\le m-3} u \|_{\infty} \lesssim t^{-\frac 12} E_m^{\frac 12}$.

Case 1: $|\alpha_2|=m$ or $|\alpha_2|=m-1$.  Clearly $|\alpha_1|\le 2$ and we have
\begin{align}
& \| P_{\le J+3} ( \tilde f_2 \partial \Gamma^{\le 2} u  \partial^2 \Gamma^{\le m} u ) \|_2
\notag \\
\lesssim & \| \tilde f_2 \partial\Gamma^{\le 2} u \|_{\infty}
\| \partial^2 P_{\le J+5} \Gamma^{\le m} u\|_2
+\sum_{l\ge J+6}
\| P_l ( \tilde f_2 \partial \Gamma^{\le 2} u ) \|_{\infty}
\| \tilde P_l( \partial^2  \Gamma^{\le m} u ) \|_2 \notag \\
\lesssim & t^{-\frac 12}
\Bigl( \| \partial P_{\le J+5} \Gamma^{\le m+1} u \|_2
+ \| \partial \langle \nabla \rangle^{-1} \Gamma^{\le m+1} u \|_2 \Bigr).
\end{align}

Case 2: $|\alpha_1| \le |\alpha_2| \le m-2$. We have
\begin{align}
  & \| P_{\le J+3} ( \tilde f_2  \partial \Gamma^{\alpha_1} u \partial^2 \Gamma^{\alpha_2} u
  ) \|_{2} \notag \\
  \lesssim & \; \| \tilde f_2 \partial \Gamma^{\alpha_1 } u \|_{\infty}
  \| P_{\le J+5} ( \partial^2 \Gamma^{\alpha_2} u ) \|_2 
  + \sum_{l\ge J+6}
  \| P_l( \tilde f_2 \partial \Gamma^{\alpha_1} u ) \|_{\infty} \| \tilde P_l ( \partial^2 \Gamma^{\alpha_2}
  u) \|_{2} \notag \\
  \lesssim &\; t^{-\frac 12}
\Bigl( \| \partial P_{\le J+5} \Gamma^{\le m+1} u \|_2
+ \| \partial \langle \nabla \rangle^{-1} \Gamma^{\le m+1} u \|_2 \Bigr).
  \end{align}

Case 3: $|\alpha_2| <|\alpha_1| \le m-2$. We have
\begin{align}
  & \| P_{\le J+3} ( \tilde f_2  \partial \Gamma^{\alpha_1} u \partial^2 \Gamma^{\alpha_2} u
  ) \|_{2} \notag \\
  \lesssim & \;
  \| P_{\le J+5} ( \partial\Gamma^{\alpha_1} u ) \|_2 
  \| \tilde f_2 \partial^2\Gamma^{\alpha_2 } u \|_{\infty}
  + \sum_{l\ge J+6}
  \| P_l( \tilde f_2 \partial^2 \Gamma^{\alpha_2} u ) \|_{\infty} \| \tilde P_l ( \partial \Gamma^{\alpha_1}
  u) \|_{2} \notag \\
  \lesssim &t^{-\frac 12}
\Bigl( \| \partial P_{\le J+5} \Gamma^{\le m+1} u \|_2
+ \| \partial \langle \nabla \rangle^{-1} \Gamma^{\le m+1} u \|_2 \Bigr). \notag
  \end{align}

Case 4: $|\alpha_1|=m-1$, $|\alpha_2|\le 2$, or $|\alpha_1|=m$, $|\alpha_2|\le 1$,
or $|\alpha_1|=m+1$, $|\alpha_2|=0$.  Easy to check that we also have 
\begin{align}
  & \| P_{\le J+3} ( \tilde f_2  \partial \Gamma^{\alpha_1} u \partial^2 \Gamma^{\alpha_2} u
  ) \|_{2} \notag \\
  \lesssim & \;
  \| P_{\le J+5} ( \partial\Gamma^{\alpha_1} u ) \|_2 
  \| \tilde f_2 \partial^2\Gamma^{\alpha_2 } u \|_{\infty}
  + \sum_{l\ge J+6}
  \| P_l( \tilde f_2 \partial^2 \Gamma^{\alpha_2} u ) \|_{\infty} \| \tilde P_l ( \partial \Gamma^{\alpha_1}
  u) \|_{2} \notag \\
  \lesssim &t^{-\frac 12}
\Bigl( \| \partial P_{\le J+5} \Gamma^{\le m+1} u \|_2
+ \| \partial \langle \nabla \rangle^{-1} \Gamma^{\le m+1} u \|_2 \Bigr). \notag
  \end{align}
The desired estimate then easily follows. 
\end{proof}
 
 We now continue the estimate of \eqref{e5.38S0}. 
 In \eqref{e5.38S0}, it suffices for us to treat the
 case $m=1$ since the estimate for $m=2$ is similar.  We  write
 \begin{align}
 &\sum_{J\ge 8} 2^{-2J} g^{kij}\int \int \int_0^1  2^J \varphi_1(2^J y)
 (\partial_1\partial_k  u)(x-\theta y) 
 (\partial_{ij}  w_J)(x-y) \partial_t P_J w e^p d\theta dy dx \notag \\
 = & \sum_{J\ge 8} 2^{-2J} g^{kij}\int \int \int_0^1  2^J \varphi_1(2^J y) \chi(t^{-\frac 23} y)
 (\partial_1\partial_k  u)(x-\theta y) 
 (\partial_{ij} w_J)(x-y) \partial_t P_J w e^p d\theta dy dx  \label{e5.48S0}\\
 & \quad +\sum_{J\ge 8} 2^{-2J} g^{kij} \int \int \int_0^1  2^J \varphi_1(2^J y)\cdot  (1-\chi(t^{-\frac 23} y) )
 (\partial_1\partial_k  u)(x-\theta y) 
 (\partial_{ij} w_J)(x-y) \partial_t P_J w e^p d\theta dy dx, \label{e5.48S1}
 \end{align} 
 where $\chi \in C_c^{\infty}(\mathbb R^2)$ satisfies $\chi(z)\equiv 1$ for $|z|\le 0.01$
 and $\chi(z)\equiv 0$ for $|z|\ge 0.02$.  In yet other words the cut-off function 
 $\chi(t^{-\frac 23} y)$ is
 to localize $y$ to the regime $|y| \ll t^{\frac 23}$.  In \eqref{e5.48S1},
 since $|y|\gtrsim t^{\frac 23}$, we clearly have (by using Lemma 
 \ref{Lem5.2S0})
 \begin{align}
 | {\eqref{e5.48S1}}| \lesssim \sum_{J\ge 0} 2^{-10J} t^{-10} \| \partial \langle \nabla \rangle^{-1}
 \Gamma^{\le m+1} u \|_2^2.
 \end{align}
 The contribution of this term is clearly acceptable for us.
 
 To estimate \eqref{e5.48S0}, we choose $\phi_1(t,x)=a(x/t)$ where $a\in C_c^{\infty}(\mathbb R^2)$
 is   such that $a(x)=1$ for $ 0.9\le |x|\le 1.1$, and $a(x)=0$ for $|x|\le 0.8$ or $|x|\ge 1.2$.
 We decompose
 \eqref{e5.48S0} as
 \begin{align}
 &\eqref{e5.48S0} \notag \\
  =&\sum_{J\ge 8} 2^{-2J} g^{kij}
 \int \int \int_0^1  2^J \varphi_1(2^J y) \chi(t^{-\frac 23} y) (1-\phi_1(t,x))
 (\partial_1\partial_k  u)(x-\theta y) 
 (\partial_{ij} w_J)(x-y) \partial_t P_J w e^p d\theta dy dx \label{e5.51S0}\\
 & \quad +\sum_{J\ge 8} 2^{-2J} g^{kij}\int \int \int_0^1  2^J \varphi_1(2^J y) \chi(t^{-\frac 23} y) \phi_1(t,x)
 (\partial_k\partial_1  u)(x-\theta y) 
 (\partial_{ij} w_J)(x-y) \partial_t P_J w e^p d\theta dy dx. \label{e5.51S1}
 \end{align}
 Observe that in \eqref{e5.51S0}, 
 since $|y| \ll t$ and $|x|$ is away from the light cone, the variable $x-\theta y$
 is also away from the light cone. We have
 \begin{align}
\sup_{0\le \theta \le 1} \| \chi(t^{-\frac 23} y) (1-\phi_1(t,x))
 (\partial_1\partial_k  u)(x-\theta y)  \|_{L_x^{\infty} L_y^{\infty} } \lesssim t^{-\frac 32} 
 E_5^{\frac 12}.
 \end{align}
 By using this estimate together with Lemma \ref{Lem5.2S0}, it is not difficult to check
 that the contribution of \eqref{e5.51S0} is acceptable for us.  It remains
 for us to estimate \eqref{e5.51S1}. In this case observe that $|x| \sim t$, $|y| \ll t$,
 $|y| \ll |x|$. 
 
 We shall use the identity:
\begin{align}
g^{kij} \partial_k a \partial_{ij} b
=g^{kij} (T_k a \partial_{ij} b - \omega_k \partial_t a T_i \partial_j b + 
\omega_k \omega_i \partial_t a T_j \partial_t b).
\end{align}
One has to be extremely careful here due to the shifts in $x$ induced by convolution! In particular
\begin{align}
T_k ( a(x+h) ) \ne (T_k a) (x+h).
\end{align}

In \eqref{e5.51S1}, we shall apply the above identity with 
\begin{align}
a(x) = (\partial_1 u)(x-\theta y), 
\quad b(x) = w_J(x-y).
\end{align}
 
Subcase 1: the piece 
\begin{align}
\sum_{J\ge 8} 2^{-2J} g^{kij} \int \int \int_0^1 2^J \varphi_1(2^J y) \chi(t^{-\frac 23} y)
\phi_1(t,x)
T_k a \partial_{ij} b \partial_t P_J w e^p d \theta dy dx.
\end{align}
Observe that 
\begin{align}
T_k a = \Bigl( \omega_k (x) \partial_t + \partial_{x_k} \Bigr)
\Bigl(  (\partial_1 u) (x-\theta y) \Bigr). \notag 
\end{align}
Since $|x| \sim t$ and $|y|\ll |x|$,  we have
\begin{align}
| \omega_k (x) - \omega_k (x-\theta y) | \lesssim \frac 1 t \cdot |y|.
\end{align}
Thus we only need to work with the piece
\begin{align} \label{e5.59S0}
\sum_{J\ge 8} 2^{-2J} g^{kij} \int \int \int_0^1 2^J \varphi_1(2^J y) \chi(t^{-\frac 23} y)
\phi_1(t,x)
(T_k \partial_1 u)(x-\theta y) (\partial_{ij}  w_J)(x-y) \partial_t P_J w e^p d \theta dy dx.
\end{align}

Since $\|T \partial  u \|_{\infty} \lesssim t^{-\frac 32}$,  the contribution of the 
term \eqref{e5.59S0} is clearly acceptable for us with the help of Lemma 
\ref{Lem5.2S0}.

 Subcase 2: the piece 
 \begin{align}
 \sum_{J\ge 8} 2^{-2J} \sum_{1\le j\le 2} g^{kij} \int \int \int_0^1 2^J \varphi_1(2^J y) \chi(t^{-\frac 23} y)
\omega_k(x) \phi_1(t,x)
\partial_t a  T_i\partial_j b \partial_t P_J w e^p d \theta dy dx,
\end{align}

Here we only treat the case $j\ne 0$, i.e. we deal with $T_i \nabla b$. Note that 
\begin{align}
&\partial_t a = (\partial_t \partial_1 u)(x-\theta y); \\
 & T_i \partial_j b =
 ( \omega_i (x) \partial_t + \partial_{x_i} ) \Bigl( 
 (\partial_j w_J) (x-y) \Bigr).
 \end{align}
 
Since $|x| \sim t$ and $|y| \ll |x|$,  the contribution of the difference
$\omega_i(x) -\omega_i(x-y)$ is acceptable for us. Thus
we only need to  estimate (for $j=1$ or $j=2$)
\begin{align}
\sum_{J\ge 8} 2^{-2J}  \int \int 
2^J \varphi_1(2^J y) \chi(t^{-\frac 23} y) \phi_1(t,x) \omega_k(x)
(\partial_t  \partial_1 u) (x-\theta y) 
(T_{i} \partial_j w_J)(x-y) \partial_t P_J w e^p  dy dx.
\label{e5.63S0}
\end{align}

Here and below we shall neglect the integral in $\theta$ since the estimates
will be uniform in $\theta \in [0,1]$. 
In \eqref{e5.63S0}, note that $|x-y| \sim t$ and the contribution of the 
commutator (below $z=x-y$)
\begin{align}
([T_i, \partial_j] w_J )(z)= - \Bigl(\partial_{z_j} (\omega_i(z) ) \Bigr) 
\cdot (\partial_t w_J)(z)
\end{align}
is clearly acceptable for us (since $\| \partial_{z_j} (\omega_i(z) ) \|_{L^{\infty}
(|z|\sim t) } \lesssim \frac 1t$). Thus we only need to estimate
\begin{align}
\sum_{J\ge 8} 2^{-2J} \int \int 
2^J \varphi_1(2^J y) \chi(t^{-\frac 23} y) \phi_1(t,x) \omega_k(x)
(\partial_t  \partial_1 u) (x-\theta y) 
(\partial_j T_{i}  w_J)(x-y) \partial_t P_J w e^p  dy dx.
\label{e5.65S0}
\end{align}
We now write $(\partial_j T_{i} w_J)(x-y)
= -\partial_{y_j} \Bigl( (T_i w_J)(x-y ) \Bigr)$. Integrating by
parts in $\partial_{y_j}$, we obtain
\begin{align}
& \eqref{e5.65S0} \notag \\
=&\sum_{J\ge 8} 2^{-2J} \int \int 
2^J \varphi_1(2^J y) \chi(t^{-\frac 23} y) \phi_1(t,x) \omega_k(x)
(\partial_t  \partial_j \partial_1 u) (x-\theta y) \cdot (-\theta)
( T_{i} w_J)(x-y) \partial_t P_J w e^p dy dx \label{e5.66S0} \\
& +\sum_{J\ge 8} 2^{-2J}  \int \int 
2^{2J} (\partial_j \varphi_1)(2^J y) \chi(t^{-\frac 23} y) \phi_1(t,x) \omega_k(x)
(\partial_t   \partial_1 u) (x-\theta y) 
( T_{i} w_J)(x-y) \partial_t P_J w e^p  dy dx \label{e5.66S1}  \\
& +\sum_{J\ge 8} 2^{-2J}  \int \int 
2^J \varphi_1(2^J y)  t^{-\frac 23} (\partial_j \chi)(t^{-\frac 23} y) \phi_1(t,x) 
\omega_k(x)
(\partial_t   \partial_1 u) (x-\theta y) 
( T_{i} w_J)(x-y) \partial_t P_J w e^p  dy dx. \label{e5.66S2} 
\end{align}

For \eqref{e5.66S1}, we have (below $\delta_1>0$ is a small constant)
\begin{align}
 & |\eqref{e5.66S1} | \notag\\
 \le &
 \sum_{J\ge 8} 2^{-2J} 
 \int \int  2^{2J}|(\nabla \varphi_1)(2^J y)| |\phi_1(t,x)| 
 \Bigl( \epsilon {|(T w_J)(x-y)|^2}  \langle |x-y|-t \rangle^{-1-\delta_1}
 \notag \\
 & \qquad
 + C_{\epsilon} \langle |x-y|-t \rangle^{1+\delta_1}  |(\partial^2 u)(x-\theta y)|^2
 |\chi(t^{-\frac 23} y)|^2
 |(\partial_t P_J w)(x)|^2 \Bigr) dx dy , \label{e5.69S0}
\end{align} 
where $\epsilon>0$ can be taken sufficiently small, and $C_{\epsilon}>0$
depends on $\epsilon$.  Note that
\begin{align}
\langle |x-y|-t \rangle^{1+\delta_1}  |(\partial^2 u)(x-\theta y)|^2 
\lesssim \frac {E_5} t ( 1+ |y| ).
 \end{align}
Due to the cut-off function $|(\nabla \varphi_1)(2^J y)|$, the factor $(1+|y|)$ is certainly
harmless for us.  It is then not difficult to check that
the contribution of  \eqref{e5.69S0} is acceptable for us.

It is not difficult to check that the contribution of the term 
\eqref{e5.66S2} is acceptable for us.

The estimate of \eqref{e5.66S0} follows along similar lines. We omit the details.

 Subcase 3: the piece 
 \begin{align}
 \sum_{J\ge 8} 2^{-2J}  \int \int \int_0^1 2^J \varphi_1(2^J y) \chi(t^{-\frac 23} y)
\phi_2(t,x)
\partial_t a  T_j \partial_t b \partial_t P_J w e^p d \theta dy dx,
\end{align}
where $j=1$ or $j=2$, and $\phi_2(t,x)$ is localized to $|x|\sim t$.  Here 
$\phi_2(t,x)$ corresponds to $\phi_1(t,x) \omega_k(x)$ or
$\phi_1(t,x) \omega_k(x) \omega_i(x)$. 
Recall  $b(x) = w_J(x-y)$ and note that 
\begin{align}
(T_j \partial_t b)(x) - (T_j \partial_t  w_J)(x-y) = 
(\omega_j(x) -\omega_j(x-y))  (\partial_{tt}  w_J)(x-y). \label{e5.71S0}
\end{align}
Since $|x|\sim t$ and $|y|\ll t$, the contribution of \eqref{e5.71S0} is acceptable by using Lemma \ref{Lem5.2S0}. Thus we only need to estimate 
 \begin{align}
 \sum_{J\ge 8} 2^{-2J}  \int \int  2^J \varphi_1(2^J y) \chi(t^{-\frac 23} y)
\phi_2(t,x)
(\partial_t \partial_1 u)(x-\theta y)  (T_j \partial_t w_J)(x-y) \partial_t P_J w e^p dy dx.
\label{e5.72S0}
\end{align}
We rewrite \eqref{e5.72S0} as
\begin{align}
 & \eqref{e5.72S0} \notag \\
 =& \frac d {dt} 
 \Bigl(  \sum_{J\ge 8} 2^{-2J}  \int \int  2^J \varphi_1(2^J y) \chi(t^{-\frac 23} y)
\phi_2(t,x)
(\partial_t \partial_1 u)(x-\theta y)  (T_j  w_J)(x-y) \partial_t P_J w e^p dy dx
\Bigr) \label{e5.73S0} \\
& \; - \sum_{J\ge 8} 2^{-2J}  \int \int  2^J \varphi_1(2^J y)  \partial_t ( \chi(t^{-\frac 23} y)
\phi_2(t,x) ) 
(\partial_t \partial_1 u)(x-\theta y)  (T_j  w_J)(x-y) \partial_t P_J w e^p dy dx  \label{5.53d1}\\
& \;-  \sum_{J\ge 8} 2^{-2J}  \int \int  2^J \varphi_1(2^J y)  \chi(t^{-\frac 23} y)
\phi_2(t,x) 
(\partial_{tt} \partial_1 u)(x-\theta y)  (T_j  w_J)(x-y) \partial_t P_J w e^p dy dx  \label{5.53d2}\\
& \; - \sum_{J\ge 8} 2^{-2J}  \int \int  2^J \varphi_1(2^J y)  \chi(t^{-\frac 23} y)
\phi_2(t,x) 
(\partial_t \partial_1 u)(x-\theta y)  (T_j  w_J)(x-y) \partial_{tt} P_J w e^p dy dx   \label{5.53d3}\\
& \; - \sum_{J\ge 8} 2^{-2J}  \int \int  2^J \varphi_1(2^J y)  \chi(t^{-\frac 23} y)
\phi_2(t,x) 
(\partial_t \partial_1 u)(x-\theta y)  (T_j  w_J)(x-y) \partial_{t} P_J w e^p  \partial_t pdy dx. \label{5.53d4}
\end{align}
It is not difficult to check that 
\begin{align}
|\eqref{5.53d1}| + |\eqref{5.53d2} | = \mathrm{NICE}.
\end{align}
For \eqref{5.53d3} we can choose $\tilde \phi_1 \in C_c^{\infty}$ such that $\tilde \phi_1 \varphi_1 \equiv \varphi_1$.
Then 
\begin{align}
  & | \eqref{5.53d3} |\notag \\
  =& \left| \sum_{J\ge 8} 2^{-2J}  \int \int  2^J \varphi_1(2^J y)  \chi(t^{-\frac 23} y)
\phi_2(t,x) 
(\partial_t \partial_1 u)(x-\theta y)  (T_j  w_J)(x-y)  \tilde \phi_1(2^Jy) \partial_{tt} P_J w e^p dy dx  \right| \notag \\
\le &
\sum_{J\ge 8} 2^{-2J} 
 \int \int  \Bigl( 2^{2J}|\varphi_1(2^J y)|^2
  \epsilon  \frac {|(T w_J)(x-y)|^2}  {\langle |x- y|-t\rangle^{1+\delta_1}} \notag \\
 & \qquad
 + C_{\epsilon}  {|\tilde \phi_1(2^Jy)|^2} {\langle |x-y|-t \rangle^{1+\delta_1} } |(\partial^2 u)(x-\theta y)|^2
 |\chi(t^{-\frac 23} y)|^2 |\phi_1(t,x)|^2 
 |(\partial_{tt} P_J w)(x)|^2 \Bigr) dx dy \notag \\
 \le & \sum_{J\ge 8} 2^{-2J}\mathrm{const} \cdot \epsilon\cdot \int {|Tw_J|^2(x)} q^{\prime} (|x|-t) dx  \notag \\
 & \qquad + \sum_{J\ge 8}2^{-4J} \cdot C_{\epsilon}^{(1)} \cdot \frac {E_5} t \| \partial_{tt} P_J w\|_2^2.
\end{align} 
In the above $\epsilon>0$ can be taken sufficiently small, and $C_{\epsilon}>0$, $C_{\epsilon}^{(1)}>0$
depend on $\epsilon$.  The term $\| \partial_{tt} P_J w\|_2^2$ can be 
 controlled with the help of Lemma \ref{Lem5.2S0}. Thus 
 \begin{align}
 |\eqref{5.53d3}| \le \mathrm{NICE}+ \frac {\mathrm{const} \cdot E_5} t \| \langle \nabla \rangle^{-1} \partial 
 \Gamma^{\le m+1} u \|_2^2.
 \end{align}
The term \eqref{5.53d4} is easier and can be estimated along similar lines. We omit the details.

Now observe
\begin{align}
 & \Bigl| \sum_{J\ge 8} 2^{-2J}  \int \int  2^J \varphi_1(2^J y) \chi(t^{-\frac 23} y)
\phi_2(t,x)
(\partial_t \partial_1 u)(x-\theta y)  (T_j  w_J)(x-y) \partial_t P_J w e^p dy dx\Bigr|
\notag \\
\lesssim &\; E_5^{\frac 12}  t^{-\frac 12}  \| \langle \nabla \rangle^{-1} \partial w \|_2^2.
\end{align}
Thus the contribution of the term
\eqref{e5.73S0} is acceptable for us.

This concludes the estimate of Subcase 3 and Case 1a. 

Case 1b: the piece 
 \begin{align}
 &\sum_{J\ge 8} 2^{-2J} g^{kij} \int P_J ( \partial_k u \partial_{ij} P_{\le J-4} w)
 \partial_t P_J w e^p dx \notag \\
= &\sum_{J\ge 8} 2^{-2J} g^{kij} \int P_J ( \partial_k \tilde P_J u \partial_{ij} P_{\le J-4}  w) 
 \partial_t P_J w e^p dx \\
 =
& \; \sum_{J\ge 8} 2^{-2J} g^{kij} \int \Bigl([P_J,  \partial_k \tilde P_J u ]\partial_{ij} P_{\le J-4} w\Bigr)
 \partial_t P_J w e^p dx. 
 \end{align}
This case can be similarly treated along the lines in Case 1a. To overcome the issue of summability
due to $P_{\le J-4}w$, one can make use of  Lemma \ref{Lem5.3Sa0} and Lemma \ref{Lem5.4S0}. 
For example, the analogue of \eqref{e5.66S1} is 
\begin{align} \label{5.64f}
\sum_{J\ge 8} 2^{-2J}  \int \int 
2^{2J} (\partial_j \varphi_1)(2^J y) \chi(t^{-\frac 23} y) \phi_1(t,x) \omega_k(x)
(\partial_t   \partial_1 u_J) (x-\theta y) 
( T_{i} w_{\le J-4})(x-y) \partial_t P_J w e^p  dy dx,
\end{align}
where $w_{\le J-4} =P_{\le J-4} w$ and $u_J=\tilde P_J u$.  In lieu of \eqref{e5.69S0}, we bound it as
\begin{align}
 & |\eqref{5.64f} | \notag\\
 \le &
 \sum_{J\ge 8} 2^{-2J} 
 \int \int  2^{2J}|(\nabla \varphi_1)(2^J y)| |\phi_1(t,x)| 
 \Bigl( \epsilon {|(T w_{\le J-4} )(x-y)|^2}  \langle |x-y|-t \rangle^{-1-\delta_1}\cdot 2^{-J\delta_2}
 \notag \\
 & \qquad
 + C_{\epsilon} \langle |x-y|-t \rangle^{1+\delta_1} 2^{J\delta_2} |(\partial^2 u_J)(x-\theta y)|^2
 |\chi(t^{-\frac 23} y)|^2
 |(\partial_t P_J w)(x)|^2 \Bigr) dx dy , 
\end{align} 
where $\delta_2>0$ is a small exponent.  The term containing $|Tw_{\le J-4} (x-y)|^2$ is clearly
manageable due to the decay factor $2^{-J\delta_2}$. 
For the second term,  by using Lemma \ref{Lem5.4S0}, we have
(for $|x| \sim t$, $|y|\ll |x|$)
\begin{align}
| \langle |x-\theta y| -t \rangle \partial^2 u_J (x-\theta y)  |^2 \lesssim t^{-1} 2^{-4J}.
\end{align}
Since $\langle |x-y| -t \rangle \lesssim \langle |x-\theta y| -t \rangle + |y|$, this term is under control.
Thus both terms are easily estimated. We omit further details.

 Case 1c: the piece 
 \begin{align}
 &\sum_{J\ge 0} 2^{-2J} g^{kij} \int P_J ( \partial_k u \partial_{ij} P_{\ge J+4} w)
 \partial_t P_J w e^p dx \notag \\
 =& \sum_{J\ge 0} 2^{-2J} g^{kij}
 \sum_{l\ge J+4} \int P_J ( \partial_k \tilde P_l u \partial_{ij} P_l w) 
 \partial_t P_J w e^p dx \notag \\
 =& \sum_{J\ge 0} 2^{-2J} g^{kij}
 \sum_{l\ge J+4} \int P_J ( (1-\phi_1) \partial_k \tilde P_l u \partial_{ij} P_l w) 
 \partial_t P_J w e^p dx   \label{5.76S0} \\
&+ \sum_{J\ge 0} 2^{-2J} g^{kij}
 \sum_{l\ge J+4} \int P_J ( \phi_1 \partial_k \tilde P_l u \partial_{ij} P_l w) 
 \partial_t P_J w e^p dx, \label{5.76S1}
 \end{align}
 where $\phi_1(t,x)=a(x/t)$ and $a \in C_c^{\infty}(\mathbb R^2)$ is a   radial  bump function 
 such that $a(x)=1$ for $ 0.9\le |x|\le 1.1$, and $a(x)=0$ for $|x|\le 0.8$ or $|x|\ge 1.2$.

 \begin{lem} \label{Lem5.3Sa0}
 We have for $l\ge 1$, 
 \begin{align}
 &\| (1-\phi_1) \partial P_l u \|_{\infty} \lesssim t^{-\frac 32} 2^{-3l }; \label{e5.78S0}\\
 &\| \phi_1 T P_l u \|_{\infty} \lesssim t^{-\frac 32} 2^{-3l}. \label{e5.79S0}
 \end{align}
 \end{lem}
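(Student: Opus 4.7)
The plan rests on two key properties of the Littlewood--Paley kernel $K_l(z)=2^{2l}\varphi(2^l z)$ of $P_l$ for $l\ge 1$: since $\phi$ is supported in $\{1/2\le |\xi|\le 7/6\}$ and hence vanishes identically near the origin, $\int z^\alpha K_l(z)\,dz=0$ for every multi-index $\alpha$ (vanishing moments of all orders), and $|K_l(z)|\lesssim_N 2^{2l}(1+2^l|z|)^{-N}$ for every $N$ (rapid Schwartz decay). Combined with the pointwise derivative estimates from Lemma~\ref{lem2.6}, these two features will produce the $2^{-3l}$ gain in both inequalities via a Bernstein--Taylor argument.

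For \eqref{e5.78S0}, I would first use finite speed of propagation ($\supp u(t,\cdot)\subseteq\{|x|\le t+1\}$) to reduce $\supp(1-\phi_1)\cap\supp u$ to the bulk region $\{|x|\le 0.9 t\}$ for large $t$. In this region $\langle r-t\rangle\gtrsim t$, so an application of \eqref{2.30AB} to $\Gamma^{\le k}u$ gives $\|\partial^k u\|_{L^\infty(|x|\le 0.9t)}\lesssim t^{-3/2}$ for $2\le k\le m-1$. I would then write $\partial P_l u(x)=\int K_l(x-y)\,\partial u(y)\,dy$, Taylor-expand $\partial u(y)$ about $x$ to order three, and invoke the vanishing moments to annihilate the polynomial part; the remainder is bounded by $\int |K_l(z)||z|^4\,dz\cdot\|\partial^4 u\|_{L^\infty(|y-x|\lesssim 1)}\lesssim 2^{-3l} t^{-3/2}$, since nearby $y$ (with $|y-x|\lesssim 1\ll t$) stay in the bulk region. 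The Schwartz tails beyond $|y-x|\gtrsim 1$ are easily absorbed using $\|\partial u\|_\infty\lesssim t^{-1/2}$ and the rapid decay of $K_l$.

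For \eqref{e5.79S0}, since $\partial_i$ commutes with $P_l$ but $\omega_i(x)=x_i/|x|$ does not, I would decompose
\[
T_i P_l u = P_l(T_i u) + [\omega_i, P_l]\partial_t u.
\]
To bound $\phi_1 P_l(T_i u)$ in the region $|x|\sim t$, I would use the identity $\partial^k(T_i u)=T_i\partial^k u+\sum_{j=1}^k\binom{k}{j}(\partial^j\omega_i)(\partial^{k-j}\partial_t u)$ together with the good-derivative decay $\|T\partial\Gamma^{\le m-3}u\|_\infty\lesssim t^{-3/2}$ from Lemma~\ref{lem2.6} and the smoothness estimates $|\partial^j\omega_i|\lesssim t^{-j}$ on $\supp\phi_1$ to establish $\|\partial^3(T_i u)\|_{L^\infty(\supp\phi_1)}\lesssim t^{-3/2}$; the same Bernstein--Taylor argument then delivers the $2^{-3l}$ factor. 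For the commutator I would write $[\omega_i,P_l]\partial_t u(x)=\int K_l(z)(\omega_i(x)-\omega_i(x-z))\partial_t u(x-z)\,dz$, Taylor-expand both $\omega_i(x-z)$ and $\partial_t u(x-z)$ about $z=0$, and use vanishing moments of $K_l$ to eliminate all polynomial pieces; the leading surviving term is $\nabla\omega_i(x)\cdot\int K_l(z)\,z\,[\partial_t u(x-z)-\partial_t u(x)-z\cdot\nabla\partial_t u(x)]\,dz$, bounded by $t^{-1}\cdot 2^{-3l}\|\partial^3 u\|_{L^\infty_{\mathrm{near}}}\lesssim t^{-3/2}2^{-3l}$, with higher Taylor terms contributing even more $t^{-1}$ factors.

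The main obstacle is controlling $\phi_1 P_l(T_i u)$ in the second estimate: $T_i u$ itself does \emph{not} decay as $t^{-3/2}$ on all of $\supp\phi_1$ (one only has $|T_i u|\lesssim t^{-3/2}\langle r-t\rangle$, which degrades to $O(t^{-1/2})$ near the edges where $\langle r-t\rangle\sim t$), so the full $t^{-3/2}$ gain must be extracted through taking three Cartesian derivatives of the combination $T_i u$ and passing them onto the good $T$-derivative of $u$ via the commutator identity, while carefully tracking how the $t^{-j}$ decay of $\partial^j \omega_i$ interacts with the remaining derivatives.
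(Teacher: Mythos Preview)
Your approach is correct in spirit and is a genuinely different route from the paper's for \eqref{e5.79S0}. For \eqref{e5.78S0} both proofs use the same ``mismatch'' idea (vanishing moments of $K_l$ plus bulk decay of higher derivatives). For \eqref{e5.79S0} the paper instead pulls out the $2^{-3l}$ factor on the Fourier side by writing $P_l u=\sum_{i,j}\Delta^{-2}\partial_{ii}\partial_{jj}P_l u=2^{-3l}Q_l^{(i)}\partial^3 u$ with $Q_l^{(i)}$ a bounded multiplier localized to $|\xi|\sim 2^l$, and then handles $\phi_1 T_1 Q_l^{(i)}\partial^3 u$ via the commutator decomposition $[\phi_1\omega_1,Q_l^{(i)}]\partial_t\partial^3 u+[\phi_1,Q_l^{(i)}]\partial^3 u+Q_l^{(i)}(T_1\partial^3 u)$, using $\|\nabla(\phi_1\omega_1)\|_\infty+\|\nabla\phi_1\|_\infty\lesssim t^{-1}$ together with $\|T\partial^3 u\|_\infty\lesssim t^{-3/2}$. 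Your decomposition $T_iP_lu=P_l(T_iu)+[\omega_i,P_l]\partial_t u$ followed by physical-space Taylor expansion is more elementary and avoids introducing auxiliary multipliers, at the cost of a slightly longer bookkeeping of the Taylor remainders; the paper's factorization is slicker and sidesteps the need to track how derivatives fall on $\omega_i$ versus $u$.

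One technical point to fix: your near/far split at $|y-x|\lesssim 1$ does not control the tails uniformly in $t$. The tail bound $\int_{|z|\gtrsim 1}|K_l(z)|\,dz\cdot\|\partial u\|_\infty\lesssim 2^{-Nl}t^{-1/2}$ does not imply $2^{-3l}t^{-3/2}$ when $l$ is fixed and $t\to\infty$. You should split instead at $|z|\le ct$ for small $c$ (so that $x-z$ stays in the intended region where the $t^{-3/2}$ derivative bounds hold), and then the far piece is $\lesssim (2^l t)^{2-N}t^{-1/2}\lesssim 2^{-3l}t^{-3/2}$ for $N\ge 5$. Similarly, for \eqref{e5.78S0} note that $(1-\phi_1)\partial P_l u$ is \emph{not} supported in $\supp u$ since $P_l$ spreads support; the region $|x|\ge 1.2t$ must be handled separately by observing that $\partial u$ is supported in $|y|\le t+1$, forcing $|x-y|\gtrsim t$ there, so Schwartz decay of $K_l$ again gives arbitrary $(2^lt)^{-N}$ smallness.
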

 \begin{proof}
 Note that away from the light cone $\partial^2 \Gamma^{\le m-3} u$
 has $O(t^{-\frac 32})$ decay. The estimate \eqref{e5.78S0} then follows from a mismatch 
 estimate. For \eqref{e5.79S0}, we can take $T_1= \omega_1 \partial_t +\partial_1$ (
 the estimate for $T_2$ is similar) and observe that
 \begin{align}
 \| \phi_1  T_1 P_l u \|_{\infty}& \lesssim 
 \sum_{i,j=1}^2 \| \phi_1 T_1 \Delta^{-2}\partial_{ii} \partial_{jj} 
  P_l u \|_{\infty} \notag \\
 & \lesssim  2^{-3l} \sum_{i=1}^2 \| \phi_1 T_1 Q_l^{(i)} \tilde \partial^3 u \|_{\infty},
 \end{align}
 where $Q_l^{(i)}$ is modified frequency projection still localized to $|\xi| \sim 2^l$, 
 and $\tilde \partial = \partial_1 $ or $\partial_2$. 
 Note that
 \begin{align}
  \phi_1 T_1 Q_l^{(i)} \tilde \partial^3 u
 &= \phi_1 (\omega_1 \partial_t +\partial_1 ) Q_l^{(i)} \tilde \partial^3 u \notag \\
 &=[\phi_1 \omega_1, Q_l^{(i)} ] \partial_t \tilde \partial^3 u 
 + [\phi_1, Q_l^{(i)}] \tilde \partial^3 u + Q_l^{(i)} ( \phi_1 T_1 \tilde \partial^3 u).
 \end{align}
 Since
 \begin{align}
 \| \nabla (\phi_1 \omega_1) \|_{\infty} + \| \nabla \phi_1 \|_{\infty} \lesssim \frac 1 t,
 \end{align}
 the commutators $[\phi_1 \omega_1, Q_l^{(i)}]$, $[\phi_1, Q_l^{(i)}]$ are under control. The desired result  follows
 easily.
 \end{proof}
 
By using \eqref{e5.78S0}, it is not difficult to check that the contribution  of \eqref{5.76S0} is acceptable for us. For \eqref{5.76S1}, we note that
 \begin{align}
g^{kij} \partial_k \tilde P_l u \partial_{ij} P_l w
=g^{kij} (T_k 
\tilde P_l u \partial_{ij} P_lw - \omega_k \partial_t \tilde P_l u T_i \partial_j P_l w+ 
\omega_k \omega_i \partial_t \tilde P_l u T_j \partial_t P_l w).
\end{align}

 By \eqref{e5.79S0} and Lemma \ref{Lem5.2S0}, we have
 \begin{align}
  & \Bigl| 
  \sum_{J\ge 0} 2^{-2J} g^{kij}
 \sum_{l\ge J+4} \int P_J ( \phi_1 T_k \tilde P_l u \partial_{ij} P_l w) 
 \partial_t P_J w e^p dx \Bigr| \notag \\
 \lesssim &  \sum_{J\ge 0}2^{-2J} \sum_{l\ge J+4} 2^{-3l} t^{-\frac 32}
 \cdot \Bigl(
 2^{2l} \| \partial \langle \nabla \rangle^{-1} 
 \Gamma^{\le m+1} u\|_2 \Bigr) \cdot 2^J \| \partial \langle
 \nabla \rangle^{-1} w\|_2.
 \end{align}
 Clearly  the contribution of this term is acceptable for us.
 
 Next we estimate the piece
 \begin{align}
  & \Bigl| 
  \sum_{J\ge 0} 2^{-2J} 
  \sum_{1\le i,j\le 2, 0\le k\le 2} g^{kij}
 \sum_{l\ge J+4} \int P_J ( \phi_1 \omega_k \partial_t \tilde P_l u  T_i \partial_j  P_l w) 
 \partial_t P_J w e^p dx \Bigr| \notag \\
 \lesssim & \sum_{1\le i, j\le 2, 0\le k\le 2} \Bigl| 
  \sum_{J\ge 0} 2^{-2J} 
 \sum_{l\ge J+4} \int P_J ( \phi_1 \omega_k \partial_t \tilde P_l u  T_i \partial_j  P_l w) 
 \partial_t P_J w e^p dx \Bigr|.   \label{5.84S0}
  \end{align}
 In yet other words, we first treat the terms containing $T\nabla w$.
 
 \underline{Estimate of \eqref{5.84S0}}.  With no loss we take $k=1$, $i=1$, $j=1$. 
 Note that 
 \begin{align}
 & \Bigl| 
  \sum_{J\ge 0} 2^{-2J} 
 \sum_{l\ge J+4} \int P_J ( \phi_1 \omega_1 \partial_t \tilde P_l u  [T_1, \partial_1]  P_l w) 
 \partial_t P_J w e^p dx \Bigr|  \notag \\
 \lesssim &
 \sum_{J\ge 0}2^{-2J} \sum_{l\ge J+4}
 \frac 1t \| \partial \tilde P_l u \|_{\infty} \| \partial P_l w \|_2  \| \partial P_J w \|_2
 \lesssim t^{-\frac 32} E_5^{\frac 12} \| \partial \langle \nabla \rangle^{-1} w \|_2^2.
 \end{align}
 Thus the commutator piece is under control. 
 We now consider 
 \begin{align}
  &\Bigl| 
  \sum_{J\ge 0} 2^{-2J} 
 \sum_{l\ge J+4} \int P_J ( \phi_1 \omega_1 \partial_t \tilde P_l u  \partial_1 (T_1 P_l w) )
 \partial_t P_J w e^p dx \Bigr|  \notag \\
 \le & \; \mathrm{NICE}
 + \Bigl| 
  \sum_{J\ge 0} 2^{-2J} 
 \sum_{l\ge J+4} \int P_J ( \phi_1 \omega_1 \partial_1 \partial_t \tilde P_l u   T_1 P_l w)
 \partial_t P_J w e^p dx \Bigr|  \label{e5.86S0} \\
 & \qquad+\Bigl| 
  \sum_{J\ge 0} 2^{-2J} 
 \sum_{l\ge J+4} \int \partial_1P_J ( \phi_1 \omega_1  \partial_t \tilde P_l u   T_1 P_l w)
 \partial_t P_J w e^p dx \Bigr|. \label{e5.87S0}
 \end{align}
 For \eqref{e5.86S0}, we have
 \begin{align}
 & \eqref{e5.86S0} \notag \\
 \le &\;
  \sum_{J\ge 0} 2^{-2J}  \Bigl|
 \sum_{l\ge J+4} \int  \phi_1 \omega_1 \partial_1 \partial_t \tilde P_l u   T_1 P_l w
P_J( \partial_t P_J w e^p )dx \Bigr|  \notag \\
\lesssim & \sum_{J\ge 0} 2^{-2J} 
\sum_{l\ge J+4} \Bigl(\epsilon 2^{-2l}
\int |T P_l w|^2 q^{\prime} dx 
+ C_{\epsilon}2^{2l} \int \frac 1 {q^{\prime}}|\phi_1 \omega_1 \partial^2 \tilde P_l u|^2 
|P_J( \partial_t P_J w e^p )|^2 dx \Bigr), \label{e5.89S0}
\end{align}
 where $\epsilon>0$ can be taken sufficiently small, and $C_{\epsilon}>0$ depends on 
 $\epsilon$. 
 \begin{lem} \label{Lem5.4S0}
 We have
 \begin{align}
 \|  \phi_1 \langle r -t\rangle \partial^2 \tilde P_l u \|_{\infty} \lesssim t^{-\frac 12}
 2^{-2l}.
 \end{align}
 \end{lem}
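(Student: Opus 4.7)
The strategy is to trade the two derivatives in $\partial^{2}\tilde P_{l}u$ for a factor $2^{-2l}$ by exploiting the spatial frequency localization of $\tilde P_{l}$, and then estimate the resulting four-derivative object using the weighted pointwise bound \eqref{2.30AB} from Lemma \ref{lem2.6}. Since the Fourier multiplier $\tilde\phi(2^{-l}\xi)$ of $\tilde P_{l}$ is supported on $|\xi|\sim 2^{l}$ (in the application we always have $l\ge J+4\ge 4$), the function $\psi(\eta):=\tilde\phi(\eta)/|\eta|^{2}$ is smooth and compactly supported in $|\eta|\sim 1$. Introducing the auxiliary operator $\tilde Q_{l}$ with symbol $\psi(2^{-l}\xi)$, which has Schwartz kernel of the form $2^{2l}K(2^{l}\cdot)$ and is uniformly bounded on $L^{\infty}$, one obtains the identity $\tilde P_{l}=-\,2^{-2l}\,\Delta\,\tilde Q_{l}$. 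Since $\tilde Q_{l}$ commutes with derivatives,
\[
\partial^{2}\tilde P_{l}u \;=\; -\,2^{-2l}\,\tilde Q_{l}\bigl(\partial^{2}\Delta u\bigr).
\]

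Next, I would multiply by $\phi_{1}\langle r-t\rangle$ and split off a commutator:
\[
\phi_{1}\langle r-t\rangle\, \tilde Q_{l}\bigl(\partial^{2}\Delta u\bigr)
\;=\; \tilde Q_{l}\!\bigl(\phi_{1}\langle r-t\rangle\,\partial^{2}\Delta u\bigr)
\;+\;[\phi_{1}\langle r-t\rangle,\,\tilde Q_{l}]\,\partial^{2}\Delta u.
\]
For the first summand, the trivial observation that each $\partial$ belongs to $\Gamma$ gives the inclusion $\partial^{4}u\subset\partial^{2}\Gamma^{\le 2}u$, and \eqref{2.30AB} applied with $I=5$ yields
\[
\|\phi_{1}\langle r-t\rangle\,\partial^{2}\Delta u\|_{\infty}
\;\lesssim\; \|\langle r-t\rangle\,\partial^{2}\Gamma^{\le 2}u\|_{\infty}
\;\lesssim\; t^{-1/2},
\]
and the $L^{\infty}$-boundedness of $\tilde Q_{l}$ preserves this bound. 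For the commutator, the estimates $\|\nabla\phi_{1}\|_{\infty}\lesssim t^{-1}$ and $|\langle r-t\rangle|\lesssim t$ on $\mathrm{supp}\,\phi_{1}$ combine to give $\|\nabla(\phi_{1}\langle r-t\rangle)\|_{\infty}\lesssim 1$, and the standard first-moment bound for kernels concentrated on scale $2^{-l}$ then yields
\[
\bigl\|[\phi_{1}\langle r-t\rangle,\,\tilde Q_{l}]f\bigr\|_{\infty}\;\lesssim\;2^{-l}\,\|f\|_{\infty}.
\]
Plugging in $f=\partial^{2}\Delta u$ and using the crude bound $\|\partial^{4}u\|_{\infty}\le \|\partial\Gamma^{\le 3}u\|_{\infty}\lesssim t^{-1/2}$ from \eqref{2.30A}, the commutator contributes $\lesssim 2^{-l}\,t^{-1/2}$. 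Restoring the overall $2^{-2l}$ prefactor, both pieces together are bounded by $t^{-1/2}\,2^{-2l}$, which is exactly the desired estimate.

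The main point requiring attention is the regularity budget: the argument uses $\partial^{2}\Gamma^{\le 2}u$ and $\partial\Gamma^{\le 3}u$ in $L^{\infty}$, which demands $I\ge 5$ in Lemma \ref{lem2.6}. This is accommodated by the standing hypothesis $m\ge 5$ of Theorem \ref{thm:main}. Crucially, no Lorentz boost vector field is invoked anywhere, so the derivation remains within the $\Gamma$-family \eqref{def_Gamma1} that underlies the entire proof.
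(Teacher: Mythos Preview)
Your proof is correct but takes a genuinely different route from the paper's. The paper applies the Klainerman--Sideris pointwise inequality \eqref{2.9a1} directly to $\tilde u=\tilde P_l u$, obtaining
\[
\|\phi_1\langle r-t\rangle\,\partial^2\tilde P_l u\|_\infty \;\lesssim\; \|\partial\Gamma^{\le 1}\tilde P_l u\|_\infty + t\,\|\square\tilde P_l u\|_\infty,
\]
and then extracts the $2^{-2l}$ gain from each piece via the frequency localization of $\tilde P_l$. You instead first pull out the $2^{-2l}$ factor by factoring $\tilde P_l=-2^{-2l}\Delta\,\tilde Q_l$, then move the weight $\phi_1\langle r-t\rangle$ through $\tilde Q_l$ by a kernel commutator, and close with the decay estimate \eqref{2.30AB} applied to $u$ itself rather than to $\tilde P_l u$. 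The paper's argument is slightly shorter and reuses \eqref{2.9a1}, but it implicitly must handle the commutator $[\Gamma^{\le 1},\tilde P_l]$ (nontrivial for $\Gamma=L_0$ and $\partial_\theta$) to reach the final bound; your approach replaces that with the more elementary weight-commutator $[\phi_1\langle r-t\rangle,\tilde Q_l]$ and relies only on the already-established decay \eqref{2.30AB} and \eqref{2.30A}, at the price of consuming one extra derivative ($\partial^2\Gamma^{\le 2}u$ rather than $\partial\Gamma^{\le 1}\tilde P_l u$ in $L^\infty$), which the hypothesis $m\ge 5$ still accommodates.
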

 \begin{proof}
 By \eqref{2.9a1} and noting that $r\sim t$ (thanks to the cut-off $\phi_1$), we have
 \begin{align}
 \|\phi_1 \langle r -t\rangle \partial^2 \tilde P_l u \|_{\infty}
 &\lesssim \|  \partial\Gamma^{\le 1} \tilde P_l u \|_{\infty}
 + t \|\square \tilde P_l u \|_{\infty} \notag \\
 & \lesssim 2^{-2l} t^{-\frac 12}. 
 \end{align}
 \end{proof}
 By using Lemma \ref{Lem5.4S0}, it is not difficult to check that 
 \eqref{e5.89S0} is under control.  
 Thus \eqref{e5.86S0} is acceptable for us.  The estimate of \eqref{e5.87S0} is similar.
 We omit the details. This concludes the estimate of \eqref{5.84S0}.

 Next we estimate the piece
 \begin{align}
  & 
  \sum_{J\ge 0} 2^{-2J} 
  \sum_{1\le i\le 2, 0\le k\le 2} g^{ki0}
 \sum_{l\ge J+4} \int P_J ( \phi_1 \omega_k \partial_t \tilde P_l u  T_i \partial_t  P_l w) 
 \partial_t P_J w e^p dx. \label{5.85S0}
 \end{align}
The idea is to rewrite
\begin{align}
&\int P_J ( \phi_1 \omega_k \partial_t \tilde P_l u  T_i \partial_t  P_l w) 
 \partial_t P_J w e^p dx \notag \\
 = &\frac d {dt} 
 \Bigl( \int P_J ( \phi_1 \omega_k \partial_t \tilde P_l u  T_i   P_l w) 
 \partial_t P_J w e^p dx \Bigr) 
 - \int P_J ( \phi_1 \omega_k \partial_t \tilde P_l u  T_i  P_l w) 
 \partial_{tt} P_J w e^p dx \notag \\
 & \qquad -\int P_J ( \phi_1 \omega_k \partial_t \tilde P_l u  T_i   P_l w) 
 \partial_t P_J w e^p  \partial_t p dx 
  -\int P_J ( \partial_t ( \phi_1 \omega_k \partial_t \tilde P_l u)  T_i   P_l w) 
 \partial_t P_J w e^p dx.
\end{align} 
It is not difficult to check that all terms are under control.

Finally we note that the piece
\begin{align}
  & 
  \sum_{J\ge 0} 2^{-2J} 
   g^{kij}
 \sum_{l\ge J+4} \int P_J ( \phi_1 \omega_k \omega_i \partial_t \tilde P_l u  T_j \partial_t  P_l w) 
 \partial_t P_J w e^p dx
 \end{align}
can be estimated similarly. We omit the details.
This concludes the estimate of \eqref{5.76S1} and Case 1c.

Case 1d: the piece 
 \begin{align}
 &\sum_{0\le J\le 7} 2^{-2J} g^{kij} \int P_J ( \partial_k u \partial_{ij} P_{\le J+3} w)
 \partial_t P_J w e^p dx.
 \end{align}
Since  $0\le J\le 7$, it is not difficult to check that this case is under control.

Case 2: $|\alpha_1| \le \frac m2$, $|\alpha_2| \le m$ with $\alpha_1+\alpha_2\le \beta$, i.e.
the piece
\begin{align}
\sum_{J\ge 0} 2^{-2J}
g^{kij} \int P_J ( \partial_k \Gamma^{\alpha_1} u
\partial_{ij} \Gamma^{\alpha_2} u)
\partial_t P_J w e^p dx.
\end{align}
This case can again be treated by using the decomposition  (with no loss
consider the main case $J\ge 8$)
\begin{align}
&\sum_{J\ge 0} 2^{-2J}
g^{kij} \int P_J ( \partial_k \Gamma^{\alpha_1} u
\partial_{ij} \Gamma^{\alpha_2} u)
\partial_t P_J w e^p dx \notag \\
= & \sum_{J\ge 0} 2^{-2J}
g^{kij} \int P_J ( \partial_k \tilde P_J \Gamma^{\alpha_1} u
\partial_{ij} P_{\le J-3} \Gamma^{\alpha_2} u)
\partial_t P_J w e^p dx  \notag \\
&\quad+  \sum_{J\ge 0} 2^{-2J}
g^{kij} \int P_J ( \partial_k \Gamma^{\alpha_1} u
\partial_{ij} P_{[J-3, J+3]} \Gamma^{\alpha_2} u)
\partial_t P_J w e^p dx  \notag \\
&\quad + \sum_{J\ge 0} 2^{-2J}
g^{kij} \int P_J ( \partial_k \Gamma^{\alpha_1} u
\partial_{ij} P_{\ge J+4} \Gamma^{\alpha_2} u)
\partial_t P_J w e^p dx.
\end{align}
The estimates are similar to the quasilinear piece $\alpha_1=0$, $\alpha_2=\beta$.
We omit the details.

Case 3: $|\alpha_2| \le \frac m2$, $|\alpha_1| \le m$ with $\alpha_1+\alpha_2\le
\beta$, i.e.  the piece
\begin{align}
\sum_{J\ge 0} 2^{-2J}
g^{kij} \int P_J ( \partial_k \Gamma^{\alpha_1} u
\partial_{ij} \Gamma^{\alpha_2} u)
\partial_t P_J w e^p dx.
\end{align}
The situation is similar to the case $\alpha_1=\beta$, $\alpha_2=0$ which is discussed below. We omit
the details.

{Case 4: $\alpha_1=\beta$, $\alpha_2=0$}.  In this case we need to estimate
\begin{align}
\sum_{J\ge 0}
2^{-2J} g^{kij}
\int P_J( \partial_k w \partial_{ij} u) \partial_t P_J w e^p dx.
\end{align}

Case 4a: $J\ge 8$. We write 
\begin{align}
P_J(\partial_k w \partial_{ij} u) &= P_J ( \partial_k P_{\le J-3} w \partial_{ij} \tilde P_J u) +
P_J ( \partial_k P_{[J-2, J+2]} w \partial_{ij} u) + \sum_{l\ge J+3}
P_J( \partial_k P_l w \partial_{ij} \tilde P_l u),
\end{align}
where $\tilde P_l$ denotes the fattened Littlewood-Paley projection localized to $|\xi| \sim 2^l$. 

We shall sketch the details for the second term $P_J ( \partial_k P_{[J-2, J+2]} w \partial_{ij} u)$. The first and the third term can be treated along similar lines with the help of
Lemma \ref{Lem5.4S0}. Thus we only need to consider
\begin{align}
\sum_{J\ge 8}
2^{-2J} g^{kij}
\int P_J( \partial_k \tilde  w_J \partial_{ij} u) \partial_t P_J w e^p dx, 
\end{align}
where  $\tilde w_J= P_{[J-2,J+2]} w$.

Subcase 4a1: the regime $|r-t|\ge \frac 1{2} t$.   Choose a  radial  bump function 
$a \in C_c^{\infty}(\mathbb R^2)$ 
 such that $a(x)=1$ for $ 0.9\le |x|\le 1.1$, and $a(x)=0$ for $|x|\le 0.8$ or $|x|\ge 1.2$.
Define $\phi_1(t,x)= a(x/t)$.  We estimate the piece
\begin{align}  \label{5.12S0}
\sum_{J\ge 8} 2^{-2J} \int P_J ( \partial \tilde w_J \underbrace{(1-\phi_1) \partial^2 u}_{=:F_1}) \partial_t P_J  w e^p dx.
\end{align}
Observe that 
\begin{align}
\| F_1\|_{\infty} + \| \partial^2 F_1\|_{\infty} \lesssim t^{-\frac 32} E_5^{\frac 12}.
\end{align}
The contribution of this case is clearly acceptable.

Subcase 4a2: the regime $|r-t|<\frac 12 t$. We estimate the piece
\begin{align}
\sum_{J\ge 8}
2^{-2J} g^{kij}
\int P_J( \partial_k \tilde w_J  \phi_1 \partial_{ij} u) \partial_t P_J w e^p dx.
\end{align}

By using the null condition, we rewrite
\begin{align}
 g^{kij} \partial_k \tilde w_J  \partial_{ij} u
 = g^{kij} (T_k \tilde w_J T_i \partial_j u
 -\omega_i T_k \tilde w_J T_j \partial_t u
 -\omega_k \partial_t \tilde w_J T_i \partial_j u
 +\omega_k \omega_i \partial_t \tilde w_J T_j \partial_t u
 + \omega_i \omega_j T_k \tilde w_J \partial_{tt} u).
 \end{align}
 The first four terms all contain $T \partial u$.  To estimate them, it suffices for us to
 consider the general expression (below $h\in C^{\infty}$ corresponds
 to various expressions involving $\omega_k$, $\omega_i$ which are functions of
 the polar angle $\theta$)
\begin{align}
\sum_{J\ge 8} 2^{-2J} \int P_J ( \partial \tilde w_J  \underbrace{h(\theta) \phi_1 T\partial u}_{=:F_2} )
\partial_t P_J  w e^p dx.
\end{align}
Observe that 
\begin{align}
\|  F_2\|_{\infty} + \| \partial^2 F_2 \|_{\infty} \lesssim t^{-\frac 32}.
\end{align}
The contribution of this piece is clearly acceptable.

We then consider the main piece
\begin{align} \label{5.22S0}
\sum_{J\ge 8} 2^{-2J} \int P_J ( \underbrace{g^{kij} \omega_i \omega_j \phi_1}_{=:\phi_2}  
\partial_{tt} u
T_k \tilde w_J ) \partial_t P_J w e^p dx.
\end{align}
We estimate it as follows:
\begin{align}
|\eqref{5.22S0} |\le
\epsilon \sum_{J\ge 0} 2^{-2J} \int |T \tilde w_J|^2 q^{\prime} e^p dx
+ C_{\epsilon} \cdot  \sum_{J\ge 0} 2^{-2J} \int \frac 1 {q^{\prime} } 
|\partial_{tt } u|^2 |P_J(e^p \partial_t P_J w )|^2  dx,
\end{align}
where $\epsilon>0$ can be taken sufficiently small and $C_{\epsilon}>0$ depends 
on $\epsilon$.  Summing over $|\beta|=m+1$ and 
 taking $\epsilon>0$ sufficiently small, the first term above can be absorbed by
 the positive Alinhach term in \eqref{e5.8S0}. The second term can be bounded as
 \begin{align} \notag
 \mathrm{const} \cdot \frac 1 t  E_5 (u) \cdot \| \langle \nabla \rangle^{-1} \partial w \|_2^2
 \end{align}
 which is clearly acceptable for us.  
 
 Case 4b: $0\le J\le 7$. This is similar to the case $J\ge 8$ which some minor changes in numerology. We omit
 the details.

\bibliographystyle{abbrv}

%\appendix

\end{document}